\newcounter{tmp}
\newtheorem*{theorem*}{Theorem}
\newtheorem{theorem}{Theorem}[section] 
\newtheorem{lemma}[theorem]{Lemma}     
\newtheorem{corollary}[theorem]{Corollary}
\newtheorem{proposition}[theorem]{Proposition}
\newtheorem*{conj*}{Question}
\newtheorem{conj}{Question}[theorem]
\theoremstyle{definition}
\newtheorem{definition}[theorem]{Definition}
\theoremstyle{remark}
\newtheorem{remark}[theorem]{Remark}
\numberwithin{equation}{section}
\theoremstyle{definition}
\newcommand{\C}{{\mathbb{C}}}
\newcommand{\PP}{{\mathbb{CP}}}
\newcommand{\R}{{\mathbb{R}}}
\newcommand{\Q}{{\mathbb{Q}}}
\newcommand{\M}{{\mathcal{M}}}
\newcommand{\In}{\mathfrak{i}}
\newcommand{\n}{\mathfrak{n}}
\newcommand{\w}{\widetilde{W}}
\newcommand{\0}{\textbf{0}}
\newcommand{\f}{\hat{f}}
\DeclareMathOperator{\ssp}{Spec}
\newcommand{\D}{\mathbb{D}}
\newcommand{\p}{\pi}
\DeclareMathOperator{\Reg}{Reg}
\DeclareMathOperator{\Sing}{Sing}
\DeclareMathOperator{\Id}{Id}
\DeclareMathOperator{\id}{Id}
\DeclareMathOperator{\rank}{rank}
\title[]
 {Periodic points of post-critically algebraic holomorphic endomorphisms.  } 
\author[ Van Tu Le]{Van Tu Le}
\address{Van Tu Le\\ Institut de Math\'ematiques de Toulouse, UMR5219\\ Universit\'e de Toulouse, CNRS\\ UPS IMT, 118 route de Narbonne, F-31062 Toulouse Cedex 9\\ France} 
\email{Van\_Tu.Le@math.univ-toulouse.fr}
\date{\today}
\thanks{2010 Mathematics Subject Classification: 32H50, 37F99.}
\thanks{\textit{Keywords: holomorphic dynamics, holomorphic endomorphisms, fixed points, periodic points, eigenvalues, multipliers.} }
\begin{document}
	\begin{abstract}A holomorphic endomorphism of $\PP^n$ is post-critically algebraic if its critical hypersurfaces are periodic or preperiodic. This notion generalizes the notion of post-critically finite rational maps in dimension one. We will study the eigenvalues of the differential of such a map along a periodic cycle. When $n=1$, a well-known fact is that the eigenvalue along a periodic cycle of a post-critically finite rational map is either superattracting or repelling. We prove that when $n=2$ the eigenvalues are still either superattracting or repelling. This is an improvement of a result by Mattias Jonsson. When $n\geq 2$ and the cycle is outside the post-critical set, we prove that the eigenvalues are repelling. This result improves one which was already obtained by Fornaess and Sibony under a hyperbolicity assumption on the complement of the post-critical set.
	\end{abstract}
	\maketitle

	\tableofcontents
	\section{Introduction}In this article, we will work with holomorphic endomorphisms of $\PP^n$. Without any further indication, every endomorphism considered in this article is holomorphic. Let ${f: \PP^n \rightarrow \PP^n}$ be an endomorphism. The critical locus $C_f$ is the set of points where the differential  
	${D_{z}f: T_{z}\PP^n \rightarrow T_{f(z)} \PP^n}$ is not surjective. The endomorphism $f$ is called \textit{post-critically algebraic}, PCA for short, if the post-critical set of $f$
	\[PC(f)=\bigcup\limits_{j\ge  1} f^{\circ j} (C_f)
	\]
	is an algebraic set of codimension one. If $n=1$, such an endomorphism is called \textit{post-critically finite} since proper algebraic sets in $\PP^1$ are finite sets. A point $z \in \PP^n$ is called a periodic point of $f$ of period $m$ if $f^{\circ m}(z) =z$ and $m$ is the smallest positive integer satisfying such a property. We define an eigenvalue of $f$ along the cycle of $z$ as an eigenvalue of $D_z f^{\circ m}$. We will study eigenvalues along periodic cycles of post-critically algebraic endomorphisms of $\PP^n$ of degree $d \ge 2$.
	
	When $n=1$, we have the following fundamental result:
	\begin{theorem}\label{dim one}
		Let $f:\PP^1\to \PP^1$ be a post-critically finite endomorphism of degree $d\ge 2$ and $\lambda$ be an eigenvalue of $f$ along a periodic cycle. Then either $\lambda =0$ or $|\lambda|>1$.
	\end{theorem}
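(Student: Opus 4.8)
The plan is to reduce the statement to the classical fact that every cycle of a post-critically finite (PCF) rational map is either superattracting or repelling, and to obtain that fact from Thurston's orbifold theory. First I would attach to $f$ its orbifold $\mathcal{O}_f=(\PP^1,\nu)$, where $\nu(p)$ is the least common multiple of the local degrees $\deg_q f^{\circ n}$ over all $n\ge 1$ and all $q$ with $f^{\circ n}(q)=p$. Since $PC(f)$ is finite this is well defined; one checks that $\nu(p)>1$ if and only if $p\in PC(f)$, and that $\nu(p)=\infty$ only when the backward orbit of $p$ meets a critical point lying on a critical cycle — in particular, a fixed point $z$ with $\nu(z)=\infty$ must itself be a critical point of $f$. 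A standard dichotomy (Thurston) then says that either $f$ is a Latt\`es map, in which case $\mathcal{O}_f$ is Euclidean, or $\mathcal{O}_f$ is hyperbolic and carries a complete conformal orbifold metric $\rho$.

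In the Latt\`es case I would argue directly: through a branched covering $\pi\colon\C/\Lambda\to\PP^1$ the map $f$ is semiconjugate to an affine map $z\mapsto az+b$ of the torus with $|a|=\sqrt d>1$, and a short computation in a $\pi$-chart shows that a fixed point of $f$ has multiplier $0$ (when it is a ramification point of $\pi$, hence critical for $f$) or multiplier of modulus $|a|^{m}=d^{m/2}>1$ for some integer $m\ge 1$ (otherwise); so the statement holds. In the hyperbolic case the key property of $\rho$ is the weak expansion $f^{*}\rho\ge\rho$ off the orbifold points, coming from factoring $f$ as an orbifold covering $\mathcal{O}'\to\mathcal{O}_f$, where $\mathcal{O}'$ carries the heavier weight $\nu(f(\cdot))/\deg_{(\cdot)}f\ge\nu$. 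If $z$ is a fixed point with $\lambda=f'(z)\ne 0$, then $\deg_z f=1$; hence $z$ is not a cusp of $\mathcal{O}_f$ (a fixed cusp would be critical by the previous paragraph), and if $z$ were a cone point of finite order $k\ge 2$ then lifting $f$ through the local $k$-fold branched cover would display it as a rotation, forcing $|\lambda|=1$. So, apart from the indifferent situation, $z$ is an ordinary point, $\rho(z)\in(0,\infty)$, and evaluating $f^{*}\rho\ge\rho$ at $z$ gives $\|D_z f\|_{\rho}=|\lambda|\ge 1$.

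The remaining point — and the one I expect to be the real obstacle — is to strengthen $|\lambda|\ge 1$ to $|\lambda|>1$, i.e. to exclude $|\lambda|=1$, together with excluding $0<|\lambda|<1$; both use the finiteness of $PC(f)$ in an essential way. If $|\lambda|=1$ then $z$ is indifferent: a rationally indifferent fixed point has an attracting petal whose immediate parabolic basin contains a critical point, whose forward orbit then converges to $z$ without ever equalling $z$, contradicting finiteness of $PC(f)$; an irrationally indifferent fixed point is of Cremer or Siegel type, and in either case $PC(f)$ accumulates (on the Julia set, respectively on the boundary of the Siegel disc), again impossible. If $0<|\lambda|<1$, then by Fatou's theorem the immediate basin of $z$ contains a critical point $c$; since $PC(f)$ is finite the orbit of $c$ is finite, and as it converges to $z$ it must satisfy $f^{\circ N}(c)=z$ for some $N\ge 1$, so $z\in PC(f)$ and $z$ is a cone point of $\mathcal{O}_f$ of finite order — which by the previous paragraph forces $|\lambda|=1$, a contradiction. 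Putting the cases together, $\lambda=0$ or $|\lambda|>1$, which is the assertion.
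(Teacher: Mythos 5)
The paper does not actually prove Theorem~\ref{dim one}: it recalls the classical facts that attracting, parabolic, Cremer and Siegel fixed points all force an infinite (or accumulating) critical orbit and then refers to Milnor's book, so any correct proof is acceptable, and your Thurston--orbifold route is a legitimate and in fact more self-contained alternative. There is, however, a genuine error in the second paragraph. The claim that, when $z$ is a fixed cone point of finite order $k\ge 2$ with $\lambda=f'(z)\neq 0$, ``lifting $f$ through the local $k$-fold branched cover would display it as a rotation, forcing $|\lambda|=1$'' is false. The local lift $\tilde f$ of $f$ through $w\mapsto z+w^k$ satisfies $\tilde f(w)=\lambda^{1/k}w+O(w^{k+1})$, and since $f$ has degree $d\ge 2$ this lift is not an automorphism of anything, so there is no reason for it to be a rotation. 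Concretely, $f(z)=z^2+c$ with $c^3+2c^2+2c+2=0$ (real root $c\approx -1.544$) is PCF with critical orbit $0\mapsto c\mapsto c^2+c\mapsto\beta$ where $\beta=-(c^2+c)$ is \emph{fixed}; the orbifold signature is $(2,2,2,\infty)$, hence hyperbolic, and $\beta$ is a cone point of order~$2$ whose multiplier $2\beta\approx -1.68$ has modulus $>1$, not $1$. (Even more simply, $z=2$ is a fixed cone point of order~$2$ of $f(z)=z^2-2$ with multiplier~$4$, though that orbifold is Euclidean.)

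This error propagates: you invoke it once to reduce the hyperbolic expansion estimate to the ordinary-point case, and once more in the last paragraph to rule out $0<|\lambda|<1$. The correct repair is to apply $f^{*}\rho\ge\rho$ \emph{at the cone point itself}: the orbifold metric has a conical singularity $\rho(\zeta)\asymp|\zeta-z|^{1/k-1}$ at a cone point of order $k$, and substituting $f(\zeta)-z=\lambda(\zeta-z)+O\bigl((\zeta-z)^2\bigr)$ gives $f^{*}\rho/\rho\to|\lambda|^{1/k}$, so $f^{*}\rho\ge\rho$ forces $|\lambda|\ge 1$ at cone points as well as at ordinary points (in the hyperbolic case the Schwarz inequality is strict and already yields $|\lambda|>1$). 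Alternatively, your $0<|\lambda|<1$ case can be closed by the sharper form of Fatou's theorem (Milnor, Lemma~8.5): a geometrically attracting basin contains a critical point whose forward orbit converges to, but is \emph{disjoint} from, the fixed point, hence is infinite, which is what the paper's cited statement actually asserts. Two smaller points: the dichotomy ``Latt\`es or hyperbolic'' omits the parabolic orbifolds $(\infty,\infty)$ and $(2,2,\infty)$ (power maps $z\mapsto z^{\pm d}$ and Chebyshev maps), which should be checked directly; and Latt\`es maps have \emph{no} superattracting fixed points, so the parenthetical ``ramification point of $\pi$, hence critical for $f$'' is incorrect --- $f$ is unramified along the postcritical set of a Latt\`es map and all its cycles are repelling.
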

	This theorem relies on the following relation of critical orbits and periodic points of endomorphisms of $\PP^1$. More precisely, without loss of generality, let $\lambda$ be the eigenvalue of an endomorphisms $f$ of $\PP^1$ (not necessary post-critically finite) at a fixed point $z$. Then,
	\begin{itemize}
		\item if $0 < |\lambda|<1$ or if $\lambda$ is a root of unity, then $z$ is the limit of the infinite orbit of some critical point,
		\item if $|\lambda|=1$ and $\lambda$ is not a root of unity then,
		\begin{itemize}
			\item either $z$ is accumulated by the infinite orbit of some critical point,
			\item or $z$ is contained in a Siegel disk whose boundary is accumulated by the infinite orbit of some critical point.
		\end{itemize}
	\end{itemize}
We refer to \cite{milnor2011dynamics} for further reading on this topic. In this article, we study how this result may be generalized to dynamics in dimension $n \ge 2$. We study the following question.
	
	\begingroup
	\setcounter{tmp}{\value{conj}}
	\setcounter{conj}{0} 
	\renewcommand\theconj{\Alph{conj}}
	\begin{conj}\label{conj 1}
		Let $f$ be a post-critically algebraic endomorphism of $\PP^n, n \ge 2$ of degree $d\geq 2$ and $\lambda$ be an eigenvalue of $f$ along a periodic cycle. Can we conclude that either $\lambda=0$ or $|\lambda|>1$?
	\end{conj}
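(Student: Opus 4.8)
We do not expect to resolve Question~\ref{conj 1} in full generality, but the plan is to answer it affirmatively in two regimes: (i) when the fixed point lies outside $PC(f)$, for every $n\ge 2$; and (ii) when $n=2$, for an arbitrary fixed point. The geometric mechanism behind (i) is as follows. Since the critical values of $f$ are contained in $PC(f)$, one has $C_f\subseteq f^{-1}(PC(f))$, hence $f^{-1}(\Omega)\subseteq\Omega$ for $\Omega:=\PP^n\setminus PC(f)$, and the restriction $f\colon f^{-1}(\Omega)\to\Omega$ is a finite unramified covering. When $C_f\not\subseteq PC(f)$ the inclusion $f^{-1}(\Omega)\subsetneq\Omega$ is strict and the covering is non-trivial; this is what will force expansion. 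The degenerate case $C_f\subseteq PC(f)$, in which $\Omega$ may fail to be Kobayashi hyperbolic (monomial, Chebyshev, or Latt\`es-type maps), has to be singled out and treated directly.

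\textbf{The exterior case.} Let $z\notin PC(f)$ be a fixed point. Then $z\notin C_f$: otherwise $z=f(z)\in f(C_f)\subseteq PC(f)$. Hence $D_zf$ is invertible, there is no zero eigenvalue, and it suffices to exclude $0<|\lambda|\le 1$. Because an unramified covering is a local isometry for the Kobayashi--Royden infinitesimal metric $k$, one has $f^{*}k_\Omega=k_{f^{-1}(\Omega)}$ on $f^{-1}(\Omega)$; combined with the distance-decreasing property of the inclusion $f^{-1}(\Omega)\hookrightarrow\Omega$ this yields, for $w\in f^{-1}(\Omega)$ and $v\in T_w\Omega$,
\[ k_\Omega\bigl(f(w),\,D_wf\cdot v\bigr)=k_{f^{-1}(\Omega)}(w,v)\ \ge\ k_\Omega(w,v). \]
Setting $w=z$ and iterating gives $k_\Omega(z,D_zf^{m}v)\ge k_\Omega(z,v)$ for every $m$. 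If $\lambda$ were an eigenvalue with $0<|\lambda|<1$ and $v$ a corresponding eigenvector, then $D_zf^{m}v=\lambda^{m}v\to 0$, so by upper semicontinuity of $k_\Omega(z,\cdot)$ and its non-degeneracy on the hyperbolic manifold $\Omega$ we would get $0<k_\Omega(z,v)\le k_\Omega(z,D_zf^{m}v)\to 0$, a contradiction. To make this unconditional --- and this is the improvement over Fornaess and Sibony, who assumed hyperbolicity of $\Omega$ --- I would prove that $\Omega$ is Kobayashi hyperbolic whenever $f$ is post-critically algebraic and not of the degenerate type above, using that $d\ge 2$ makes $PC(f)$ a large enough hypersurface and pulling back along $f$ to propagate non-degeneracy.

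\textbf{Unimodular eigenvalues.} The case $|\lambda|=1$, the higher-dimensional analogue of a Siegel disk, is the main obstacle. The plan is a rigidity argument: a unimodular eigenvalue forces the displayed inequality to be an equality in the corresponding direction, which by a Schwarz--Pick/Cartan-type lemma is incompatible with the covering $f\colon f^{-1}(\Omega)\to\Omega$ being non-trivial unless $f$ is, near $z$, conjugate to its linear part; linearizing then produces an $f$-invariant rotation domain whose boundary accumulates on $PC(f)$. Since $PC(f)$ is a \emph{proper} algebraic hypersurface and post-critically algebraic maps admit no Fatou components other than superattracting basins, such indifferent behaviour cannot occur, a contradiction. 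The codimension-one hypothesis on $PC(f)$ enters here essentially, and this is the step I expect to be the most delicate.

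\textbf{The case $n=2$.} For a fixed point $z\in PC(f)$ the covering argument does not apply directly, so I would argue locally. As $\dim_{\C}PC(f)=1$, near $z$ the set $PC(f)$ is a finite union of irreducible branches, and after replacing $f$ by an iterate each branch is invariant; the restriction of $f$ to the normalization of such a branch is a post-critically finite map of $\PP^1$, so by Theorem~\ref{dim one} the eigenvalue tangent to $PC(f)$ is $0$ or of modulus $>1$. For the transverse eigenvalue I would use local normal forms for $f$ along the invariant curve, in the spirit of Jonsson's work: if $z\in C_f$ the transverse eigenvalue may vanish ($z$ being transversally superattracting), and otherwise a one-dimensional-type argument on a transverse slice, supplied with the covering structure of $\Omega$ near $z$, again gives modulus $>1$. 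Combining the tangential and transverse information with the exterior case already handled completes the statement for $n=2$. Throughout, the exclusion of unimodular eigenvalues is the crux; everything else is either the Kobayashi estimate above or one-dimensional bookkeeping on the invariant subvarieties.
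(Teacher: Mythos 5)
Your proposal correctly identifies the two regimes the paper actually handles (fixed point outside $PC(f)$ for all $n\ge 2$; arbitrary fixed point when $n=2$) and the exterior case begins with the right observation that $z\notin PC(f)$ forces $z\notin C_f$. But the method you propose diverges from the paper's and, more importantly, has two genuine gaps.

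First, the Kobayashi-metric argument requires $\Omega=\PP^n\setminus PC(f)$ to be Kobayashi hyperbolic, and you defer this to a claim that you would ``prove $\Omega$ is Kobayashi hyperbolic whenever $f$ is PCA and not of the degenerate type.'' There is no such dichotomy readily available: for many PCA maps $PC(f)$ is a union of few hyperplanes and $\Omega$ fails to be hyperbolic, yet these are not reducible to the monomial/Chebyshev/Latt\`es list. The paper's whole point in the exterior case is to \emph{avoid} any hyperbolicity hypothesis (this is exactly the improvement over Forn\ae ss--Sibony). It does so by replacing the Kobayashi metric with Ueda's normality theorem for families of lifted inverse branches: one passes to a homogeneous lift on $\C^{n+1}$, lifts inverse branches of $f^{\circ j}$ to the universal cover $\widetilde X$ of $\C^n\setminus PC(f)$, proves normality of the resulting family $\{g^{\circ j}\}$, and extracts a closed center submanifold $M$ (Abate's retraction theorem), whose image $\pi(M)$ is bounded via the Hubbard--Papadopol potential function. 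None of this requires $\Omega$ to be hyperbolic.

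Second, your treatment of unimodular eigenvalues leans on the claim that ``post-critically algebraic maps admit no Fatou components other than superattracting basins.'' In dimension $\ge 2$ this is not an available theorem; indeed, proving there are no indifferent behaviours at fixed points is essentially the content of Question~\ref{conj 1} itself, so the argument becomes circular. The paper instead proceeds concretely: it constructs a holomorphic semiconjugacy $\Phi$ from $g|_M$ to its linear part by Ces\`aro averaging, rules out parabolic eigenvalues because the resulting curve of fixed points of $f^{\circ q}$ contradicts finiteness of the fixed-point set, and for irrational eigenvalues produces a genuine rotation disc $\Gamma\cong\D(0,R)$ inside $M$; the bounded map $\tau=\pi\circ(\Phi|_\Gamma)^{-1}$ then has radial limits a.e.\ on $\partial\D(0,R)$ which must land on $PC(f)$, and Fatou--Riesz applied to $Q\circ\tau$ (with $Q$ a defining polynomial of $PC(f)$) forces $Q\circ\tau\equiv 0$, contradicting $\tau(0)=z\notin PC(f)$. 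Your sketch for $n=2$ at points of $PC(f)$ is directionally right (normalization plus Theorem~\ref{dim one} for the tangential eigenvalue, Jonsson-style local normal forms relating the two eigenvalues at singular points), but the transverse-eigenvalue argument at a regular point again needs the radial-limit machinery in the modified covering, not merely a ``one-dimensional-type argument on a transverse slice.''
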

	\endgroup
	\setcounter{theorem}{\thetmp} 
	In this article, we give an affirmative answer to this question when $n=2$.
	\begin{restatable}{thm}{PCA}\label{PCA P2}Let $f$ be a post-critically algebraic endomorphism of $\PP^2$ of degree $d\geq 2$ and $\lambda$ be an eigenvalue of $f$ along a periodic cycle. Then either $\lambda=0$ or $|\lambda|>1$.
	\end{restatable}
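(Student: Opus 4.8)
The plan is to separate the argument according to where the fixed point $z$ sits relative to $PC(f)$. Two reductions are free. Since the conclusion is vacuous when $\lambda=0$, it suffices to rule out eigenvalues with $0<|\lambda|\le 1$. And replacing $f$ by an iterate $f^{\circ m}$ leaves the post-critical set unchanged (one checks $PC(f^{\circ m})=PC(f)$ using surjectivity of $f$), keeps $f^{\circ m}$ post-critically algebraic, and turns $\lambda$ into $\lambda^{m}$, still in the punctured closed unit disc; so I may assume that every irreducible component of $PC(f)$, and every local branch of $PC(f)$ at $z$, is $f$-invariant. Finally, $0$ is an eigenvalue of $D_zf$ exactly when $z\in C_f$, and $z\in C_f$ forces $z=f(z)\in PC(f)$; thus when $z\notin PC(f)$ the differential $D_zf$ is invertible and the claim is that $z$ is repelling.

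For $z\notin PC(f)$ I would invoke the companion statement on fixed points outside the post-critical set (valid for all $n$, refining the theorem of Fornaess and Sibony). Its engine is that $f$ restricts to a finite \emph{unramified} covering $\PP^{n}\setminus f^{-1}(PC(f))\to \PP^{n}\setminus PC(f)$ of a domain onto one containing it; hence $f$ does not decrease the Kobayashi--Royden infinitesimal pseudometric of $U:=\PP^{n}\setminus PC(f)$, and evaluating this at an eigenvector of $D_zf$ (legitimate as $z\in U$) already gives $|\lambda|\ge 1$ whenever the pseudometric is positive there. To obtain the strict inequality and remove the hyperbolicity hypothesis of Fornaess and Sibony, one iterates and works on $\PP^{n}\setminus\bigcup_{j\ge 0}f^{-j}(PC(f))$, whose removed locus is everywhere dense, forcing the pseudometric at $z$ to blow up along the eigendirection.

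Now assume $z\in PC(f)$; this is the genuinely two-dimensional, and new, part. Let $B_{1},\dots,B_{r}$ ($r\ge 1$) be the local branches of $PC(f)$ at $z$; each is $f$-invariant, so each tangent line $T_zB_i$ is $D_zf$-invariant, hence an eigenline. Consequently every eigenvalue of $D_zf$ is either \emph{tangential} --- a multiplier along some $T_zB_i$ --- or is the single remaining \emph{transversal} eigenvalue. For a tangential eigenvalue along $B_i\subseteq V$, with $V$ the global component, I would pass to the normalization $\nu\colon\widetilde V\to V$: then $f|_V$ lifts to $\widetilde f\colon\widetilde V\to\widetilde V$, fixing the point over $z$, with multiplier there equal to the tangential eigenvalue up to a factor of modulus $\ge 1$ coming from a possible singularity of $V$ at $z$. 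A cohomological computation ($f^{*}[V]=(\deg f)[V]$ in $\Pic(\PP^{2})$) gives $\deg\widetilde f=\deg f\ge 2$, so $\widetilde V$ is $\PP^{1}$ or an elliptic curve and $\widetilde f$ has finitely many critical points; one then checks --- using the algebraicity of $PC(f)$ --- that their forward orbits are finite, so $\widetilde f$ is post-critically finite. Then \ref{dim one} (or, when $\widetilde V$ is elliptic, the elementary fact that such an endomorphism is unramified with multiplier of modulus $\sqrt{\deg\widetilde f}>1$ at every fixed point) forces the tangential eigenvalue to be $0$ or of modulus $>1$.

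The transversal eigenvalue $\mu$ is the main obstacle. Let $B$ be the invariant component through $z$ and $f^{*}B$ its divisorial preimage. If $B$ occurs in $f^{*}B$ with multiplicity $\ge 2$ --- equivalently $B\subseteq C_f$ --- a local normal form along $B$ shows $D_zf$ annihilates the transversal direction, so $\mu=0$; otherwise $f^{*}B=B+R$ with $R\ge 0$ effective, and then $\deg(f|_B)=\deg f\ge 2$ while the transversal multiplier along $B$ is a nonzero holomorphic section over $\widetilde B$ of the line bundle $N_B^{\vee}\otimes\widetilde f^{*}N_B$. The crux --- and what goes beyond Jonsson --- is to exclude $0<|\mu|\le 1$. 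Here I expect to need an orbifold structure on $\PP^{2}$ with cone angles along $PC(f)$ (and, if necessary, along finitely many further invariant curves) for which $f$ is an orbifold covering, together with a metric of negative curvature that $f$ strictly expands; then $|\mu|>1$ would follow from the local expansion estimate at $z$, corrected for the cone angle at $z$ (a sufficiently large cone angle instead forcing $\mu=0$). The hard point --- the reason this lies deeper than the hyperbolicity assumption removed from Fornaess and Sibony --- is to prove that the orbifold attached to a post-critically algebraic endomorphism of $\PP^{2}$ is of hyperbolic type and that its expanding metric remains controllable at a cone point lying on $PC(f)$; an alternative route I would pursue in parallel is a holomorphic fixed-point-index analysis of the transversal-multiplier section on $\widetilde B$, aiming to reduce the statement to the one-dimensional \ref{dim one}.
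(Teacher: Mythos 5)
The proposal correctly identifies the overall decomposition into the three cases that the paper also uses (fixed point outside $PC(f)$; on $\Reg PC(f)$ with tangential and transversal eigenvalues; at a singularity of $PC(f)$), and the reduction by passing to an iterate is sound. However, there is a genuine gap precisely at the point you yourself flag as ``the crux'': the transversal eigenvalue $\mu$ when $z \in \Reg PC(f)$. You offer two candidate strategies --- an orbifold metric on $\PP^2$ with cone angles along $PC(f)$ that $f$ strictly expands, and a fixed-point-index analysis of the transversal multiplier as a section over $\widetilde{B}$ --- but neither is carried out, and neither obviously closes. The orbifold route would require constructing an $f$-invariant hyperbolic orbifold structure on a PCA endomorphism of $\PP^2$ with controlled behaviour at a fixed cone point on $PC(f)$; nothing in your sketch explains why such a structure exists in general (this is exactly the kind of hyperbolicity hypothesis the paper is trying to remove), and the ``alternative route'' is just a name. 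The paper's actual argument for this step (Proposition \ref{prop_transversal eigenvalue}) is a substantial construction of its own: one replaces the universal cover of $\C^n\setminus PC(f)$ by a cover $\widetilde{X}$ built from paths starting at $z$ and lying in $X$ for $t>0$, modulo homotopy ``relative to $X$''; the normality of Ueda-type inverse branches plus a Ces\`aro averaging (Proposition \ref{linearizing}) produces a rotation domain $\Gamma$ biholomorphic to a punctured disc, and a Fatou--Riesz argument on the bounded holomorphic map $\p\circ(\Phi|_\Gamma)^{-1}$ forces $z\in PC(f)$-tangency, a contradiction. None of this appears in the proposal.

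Two smaller issues. First, for $z\notin PC(f)$ you are entitled to invoke Theorem \ref{first case}, but your suggested mechanism for \emph{proving} it --- the Kobayashi--Royden pseudometric of $\PP^n\setminus\bigcup_j f^{-j}(PC(f))$ ``blowing up along the eigendirection'' because the removed set is dense --- is not an argument: the complement of a dense set can have empty interior, in which case the Kobayashi metric of that set is not meaningful, and density alone does not control the pseudometric at a fixed point of the open set $\PP^n\setminus PC(f)$. The paper instead uses Ueda's normality of inverse branches to construct a center manifold $M$, linearizes $g|_M$, and extracts an invariant disc whose radial limits must land in $PC(f)$. Second, the singular-point analysis is under-developed: when $PC(f)$ has a cusp at $z$ with one local branch $\Sigma$ of Puiseux type $(t^m, t^n+\cdots)$ with $1<m<n$ and $m\nmid n$, the two eigenvalues of $D_zf$ are $\lambda^m$ and $\lambda^n$ where $\lambda$ is the multiplier of the lifted map $\hat f$ on the normalization (Proposition \ref{invariant cusp}); your tangential/transversal dichotomy does not produce this relation, and the ``transversal'' eigenvalue $\lambda^n$ is not captured by your normal-bundle description, yet it is controlled only through this Puiseux identity plus Theorem \ref{dim one} applied to $\hat f$. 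Similarly, the tangent-branches case needs the relation $\lambda_2=\lambda_1^m$ of Proposition \ref{tangential regular}. You also assert $\deg\widetilde f=\deg f$ via $f^*[V]=(\deg f)[V]$; the correct statement, cited in the paper from Forn\ae ss--Sibony, is only $\deg\widetilde f\ge 2$, which is what the argument actually uses.

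Summary: the skeleton is right, the tangential eigenvalue argument via normalization and post-critical finiteness of $\hat f$ is essentially the paper's (Lemma \ref{repel on normalisation} and Proposition \ref{pcf}), but the transversal-eigenvalue case --- the new and hard content of the theorem --- is missing, and the singular case needs the explicit Puiseux computations to be complete.
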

	Let us note that this question has been studied by several authors and some partial conclusions have been achieved. We refer to \cite{fornaess1992critically}, \cite{fornaess1994complex}, \cite{jonsson1998some}, \cite{ueda1998critical}, \cite{rong2008fatou}, \cite{gauthier2016symmetrization}, \cite{silvermann2019},\cite{gauthier2019geometric},\cite{ji2020structure}, \cite{astorg2018dynamics} for the study of post-critically algebraic endomorphisms in higher dimension. Concerning eigenvalues along periodic cycles, following Forn\ae ss-Sibony \cite[Theorem 6.1]{fornaess1994complex}, one can deduce that: for a PCA endomorphism $f$ of $\PP^n, n \ge 2$ such that $\PP^n \setminus PC(f)$ is Kobayashi hyperbolic and hyperbolically embedded, an eigenvalue $\lambda$ of $f$ along a periodic cycle outside $PC(f)$ has modulus at least or equal to $1$. Following Ueda \cite{ueda1998critical}, one can show that: the differential of a PCA endomorphism $f$ of $\PP^n, n \ge 2$ along a periodic cycle which is not critical has modulus at least $1$ (See Corollary \ref{cor_non attracting}  in this article). When $n=2$, Jonsson \cite{jonsson1998some} considered PCA endomorphisms of $\PP^2$ whose critical locus does not have periodic irreducible component. He proved that for such class of maps, every eigenvalue along a periodic cycle outside the critical set has modulus strictly bigger than $1$. Recently, in \cite{astorg2018dynamics}, Astorg studied the Question \ref{conj 1} under a mild transversality assumption on irreducible components of $PC(f)$.
	
	Our approach to prove Theorem \ref{PCA P2} is subdivided into two main cases: the cycle is either outside, or inside the post-critical set.
	
	When the cycle is outside the post-critical set, we improve the method of \cite{fornaess1994complex} to get rid of the Kobayashi hyperbolic assumption and exclude the possibility of eigenvalues of modulus $1$. We obtain the following general result.
	\begin{restatable}{thm}{firstform}\label{first case}
		Let $f$ be a post-critically algebraic endomorphism of $\PP^n$ of degree $d \ge 2$ and $\lambda$ be an eigenvalue of $f$ along a periodic cycle outside the post-critical set. Then $|\lambda|>1$.
	\end{restatable}
	When the cycle is inside the post-critical set, we restrict our study to dimension $n=2$. Let $z \in PC(f)$ be a periodic point of period $m$. Counting multiplicities, $D_z f^{\circ m}$ has two eigenvalues $\lambda_1$ and $\lambda_2$. We consider two subcases: either $z$ is a regular point of $PC(f)$, or $z$ is a singular point of $PC(f)$. 
	
	If the periodic point $z$ is a regular point of $PC(f)$, the tangent space $T_{z} PC(f)$ is invariant by $D_{z} f^{\circ m}$. Then $D_{z} f^{\circ m}$ admits an eigenvalue $\lambda_1$ with associated eigenvectors in  $T_{z} PC(f)$. The other eigenvalue $\lambda_2$ arises as the eigenvalue of the linear endomorphism $\overline{D_{z} f^{\circ m}} : T_{z} \PP^2 / T_{z} PC(f) \to T_{z} \PP^2 / T_{z} PC(f) $ induced by $D_{z} f^{\circ m}$. By using the normalization  of irreducible algebraic curves and Theorem \ref{dim one}, we prove that the eigenvalue $\lambda_1$ has modulus strictly bigger than $1$. Regarding the eigenvalue $\lambda_2$, following the idea used to prove Theorem \ref{first case}, we also deduce that either $\lambda_2 = 0$ or $|\lambda_2|>1$.
	
	If the periodic point $z$ is a singular point of $PC(f)$, in most of the cases, there exists a relation between $\lambda_1$ and $\lambda_2$. Then by using Theorem \ref{dim one}, we deduce that for $i =1,2$, either $\lambda_j = 0$ or $|\lambda_j|>1$. This has been already observed in \cite{jonsson1998some} but for the sake of completeness, we will recall the detailed statements and include the proof. 
	
	\textbf{Structure of the article:} In Section 2, we will recall the results of Ueda and prove that when a fixed point is not a critical point then every eigenvalue has modulus at least $1$. In Section 3, we will present the strategy and the proof of Theorem \ref{first case}. In Section 4, since the idea is the same as the proof of Theorem \ref{first case}, we will give the proof that for an eigenvalue $\lambda$ of a PCA map a periodic cycle which is a regular point of the post-critical set and the associated eigenvectors are not tangent to the post-critical set, then $\lambda$ is either zero or of modulus strictly bigger than $1$. In Section 5, we will study the dynamics of PCA endomorphisms of $\PP^2$ restricting on invariants curves and then prove Theorem \ref{PCA P2}. 
	
	\textbf{Acknowledgement:} The author is grateful to his supervisors Xavier Buff and Jasmin Raissy for their support,
	suggestions and encouragement. The author would like to thank also Matthieu Arfeux, Matthieu Astorg, Fabrizio 	Bianchi, Charles Favre, Thomas 	Gauthier,	Sarah 	Koch, Van Hoang Nguyen,
	Matteo 	Ruggiero,
	Johan	Taflin and
	Junyi	Xie for their comments and useful discussions. This work is supported by the fellowship of Centre International de Math\'ematiques et d'Informatique de Toulouse (CIMI).
	\\
	\textbf{Notations:} We denote by 
	
	\begin{itemize}
		\item[$\bullet$] $\D(0,r)=\{x \in \C|\|x\| < r \}$: the ball of radius $r$ in $\C$ (or simply $\D$ when $r=1$).
		\item[$\bullet$] $\D(0,R)^*$ or $\D^*$: the punctured disc $\D(0,R) \setminus \{0\}$.
		\item[$\bullet$] For two paths $\gamma,\eta: [0,1] \rightarrow X$ in a topological space $X$ such that $\gamma(1) = \eta(0)$, the concatenation path  
		$\gamma*\eta: [0,1] \rightarrow X$ is defined as
		\[\gamma* \eta(t) = \left\{\begin{array}{cc}
		\gamma(2t) \,, t \in [0,\frac{1}{2}]\\
		\eta(2t-1)\,, t \in [\frac{1}{2},1]
		\end{array}\right.
		.\]
		\item[$\bullet$]$\ssp(L)$: the set of eigenvalues of a linear endomorphism $L$ of a vector space $V$.
		\item[$\bullet$] For an algebraic set (analytic set) in a complex manifold $X$, we denote by $\Sing X$ the set of singular points of $X$ and by $\Reg X$ the set of regular points of $X$.
		
			\end{itemize}

\section{Periodic cycles outside the critical set}\label{sect_non critical fixed point}
In this section, we will prove that the eigenvalues of a post-critically algebraic endomorphism of $\PP^n$ at a fixed point, which is not a critical point, have modulus at least or equal to $1$. The proof relies on the existence of an open subset on which we can find a family of inverse branches and the fact that the family of inverse branches of endomorphisms of $\PP^n$ is normal. These results are due to Ueda, \cite{ueda1998critical}.  

Let us recall the definition of finite branched covering:
\begin{definition}
A proper, surjective continuous map $f: Y \rightarrow X$ of complex manifolds of the same dimension is called a \textit{finite branched covering} (or \textit{finite ramified covering}) if there exists an analytic set $D$ of codimension one in $X$ such that the map
\[f: Y\setminus f^{-1}(D) \rightarrow X \setminus D
\] 
is a covering. We say that $f$ is \textit{ramified over $D$} or a \textit{$D$-branched covering}. The set $D$ is called the \textit{ramification locus.}
\end{definition}
We refer to \cite{gunning1990introduction} for more information about the theory of finite branched coverings. 

Recall that endomorphisms $f$ of $\PP^n$ of degree $d \ge 2$ are finite branched covering ramifying over $f(C_f)$. If $f$ is post-critically algebraic, for every $j \ge 1$, $f^{\circ j}$ is ramified over $PC(f)$.

Let $z \notin C_f$ be a fixed point of $f$. Then $f^{\circ j}$ is locally invertible in a neighborhood of $z$. The following result which are due to Ueda ensures that we can find a common open neighborhood on which inverse branches of $f^{\circ j}$ fixing $z$ are well-defined for every $j \ge 1$.
\begin{lemma}[\cite{ueda1998critical}, Lemma 3.8]\label{lm_normal neighborhood}
	Let $X$ be a complex manifold and $D$ be an analytic subset of $X$ of codimension $1$. For every point $x \in X$, if $W$ is a simply connected open neighborhood of $x$ such that $(W,W \cap D)$ is homeomorphic to a cone over $(\partial W, \partial W \cap D)$ with vertex at $x$, then for every branched covering $\eta: Y \rightarrow W$ ramifying over $D \cap W$, the set $\eta^{-1}(x)$ consists of only one point.
		\footnote{The important consequence of this hypothesis is that, for every open neighborhood of $W_0$ of $z$ in $W$, the homomorphism $\pi_1(W_0 \setminus D) \to \pi_1(W \setminus D)$ is surjective }
\end{lemma}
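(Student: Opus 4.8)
The plan is to compute $\#\eta^{-1}(x)$ in two ways---from the local structure of $\eta$ near the fiber $\eta^{-1}(x)$, and from the global topology of $Y$---and to observe that the cone hypothesis is precisely what forces the two counts to coincide. Write $W':=W\setminus D$ and $Y':=Y\setminus\eta^{-1}(D)$, so that $\eta\colon Y'\to W'$ is an honest finite covering map. Since $\eta$ is a finite branched covering, $\eta^{-1}(x)$ is a finite set, say $\eta^{-1}(x)=\{y_1,\dots,y_k\}$, and we must show $k=1$. I will assume throughout that $Y$ is connected (this is implicit in the statement---for disconnected $Y$ one only gets $\#\eta^{-1}(x)=\#\pi_0(Y)$---and it holds in all the intended applications). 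Then, $\eta^{-1}(D)$ being a proper analytic subset of the connected complex manifold $Y$, the set $Y'$ is connected as well, so $\#\pi_0(Y')=1$; the whole problem thus reduces to proving $k=\#\pi_0(Y')$.

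\emph{Local count.} First I would analyze $\eta$ over small sub-cone neighborhoods of $x$. The cone structure provides, for $0<s<1$, neighborhoods $W_s\ni x$ forming a neighborhood basis of $x$, each with $(W_s,W_s\cap D)$ again of the type appearing in the hypothesis, and with all the inclusions $W_s\setminus D\hookrightarrow W\setminus D$ homotopy equivalences (deformation retracts obtained by sliding the cone coordinate inward). Choose pairwise disjoint connected open neighborhoods $N_i\ni y_i$; by properness of $\eta$ one may pick $s$ with $\eta^{-1}(W_s)\subseteq\bigsqcup_i N_i$, so that $\eta^{-1}(W_s)=\bigsqcup_i A_i$, where $A_i:=\eta^{-1}(W_s)\cap N_i$ is an open-and-closed neighborhood of $y_i$ in $\eta^{-1}(W_s)$. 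The key point is that each $A_i$ is connected: any nonempty open-and-closed $B\subseteq\eta^{-1}(W_s)$ has $\eta(B)=W_s$, because $B\setminus\eta^{-1}(D)$ is a nonempty open-and-closed subset of the covering $\eta^{-1}(W_s\setminus D)\to W_s\setminus D$ over the connected base $W_s\setminus D$, hence (coverings being open maps) surjects onto $W_s\setminus D$, and then $\eta(B)$, being closed by properness, contains the closure $W_s$ of the dense subset $W_s\setminus D$; in particular $x\in\eta(B)$, so $B$ meets $\eta^{-1}(x)$, and since the $y_i$ are distinct with $A_i\subseteq N_i$, the set $A_i$ cannot split into two nonempty open-and-closed pieces. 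Thus $\eta^{-1}(W_s)$ has exactly $k$ connected components, and deleting the proper analytic subset $\eta^{-1}(D)$ from the manifold $\eta^{-1}(W_s)$ does not change this, so $\#\pi_0\big(\eta^{-1}(W_s\setminus D)\big)=k$.

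\emph{Passing from $W_s$ to $W$.} Here the cone hypothesis enters. The covering $\eta^{-1}(W_s\setminus D)\to W_s\setminus D$ is the pullback of $\eta\colon Y'\to W'$ along the inclusion $W_s\setminus D\hookrightarrow W\setminus D$, which is a homotopy equivalence by the previous step. Since this inclusion induces an isomorphism on fundamental groups (equivalently, since a covering map is a fibration, pulling it back along a homotopy equivalence yields a homotopy equivalence of total spaces), the two coverings have the same number of connected components, i.e. $\#\pi_0(Y')=\#\pi_0\big(\eta^{-1}(W_s\setminus D)\big)=k$. Combined with $\#\pi_0(Y')=1$ this gives $k=1$, as required.

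\emph{Main obstacle.} The genuinely delicate step is recognizing that the comparison must be run for the \emph{honest} covering over $W\setminus D$, rather than naively by retracting $W$ onto the point $x$: a branched covering is not a fibration, so pulling it back along the homotopy equivalence $W_s\setminus\{x\}\hookrightarrow W\setminus\{x\}$ would give no information, and in fact without the cone hypothesis the local count near $x$ and the global count on $Y$ genuinely disagree. Once one excises $D$---which is where the cone structure of the \emph{pair} $(W,W\cap D)$ is used, namely to keep $W_s\setminus D\hookrightarrow W\setminus D$ a homotopy equivalence---everything else is routine: the finiteness of $\eta^{-1}(x)$ from properness, the connectedness of the pieces $A_i$ by the open-and-closed argument, and the fact that a connected complex manifold remains connected after removing a proper analytic subset. (Alternatively, one can invoke the Grauert--Remmert/Stein theory of branched coverings: a finite branched covering with normal---here smooth---total space is determined by its restriction over the complement of the branch locus, and $\#\eta^{-1}(x)$ equals the number of connected components of that restriction over a punctured neighborhood of $x$; the cone hypothesis identifies such a punctured neighborhood, after removing $D$, with $W\setminus D$ itself, and then one concludes exactly as above.)
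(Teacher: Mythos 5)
The paper does not include a proof of this lemma---it is cited from Ueda \cite{ueda1998critical}, Lemma~3.8---so there is no in-text argument to compare against; I will assess your proof on its own merits. Your argument is correct. The heart of the matter---using the cone structure to produce a shrinking family of sub-cone neighborhoods $W_s$ whose punctured versions $W_s\setminus D\hookrightarrow W\setminus D$ are deformation retracts, so that the restriction of the honest finite covering $\eta\colon Y'\to W'$ over $W_s\setminus D$ has the same number of connected components as $Y'$, and then identifying that number as $\#\eta^{-1}(x)$ via the local open-and-closed/properness analysis near the fiber---is exactly what makes the cone hypothesis do its job. You are also right to flag the implicit connectedness hypothesis on $Y$: as literally stated the conclusion would be $\#\eta^{-1}(x)=\#\pi_0(Y)$, but in the paper's application (Proposition~\ref{prop_inverse branches}) $Y$ is chosen as a connected component of $f^{-j}(W)$, so the hypothesis is satisfied. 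One small expositional quibble: in the parenthetical ``coverings being open maps'' used to justify that the nonempty open-and-closed set $B\setminus\eta^{-1}(D)$ surjects onto the connected base $W_s\setminus D$, openness of covering maps only gives that the image is open; one should also note that the image is closed in $W_s\setminus D$ (either because the image of an open-and-closed subset under a covering map over a locally connected base is itself open-and-closed, or because the covering here is finite and hence proper). This is easily patched and does not affect the validity of your proof.
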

Given a topological space $T$, the cone over a set $K \subset T$ with vertex at a point $x \in T$ is the quotient space $Cone(K) = K \times [0,1]/ K \times \{0\}$ where $x$ is identified with the equivalent class of $K \times \{0\}$, which is a point in $Cone(K)$.
\begin{proposition}\label{prop_inverse branches}
	Let $f$ be a post-critically algebraic endomorphism of $\PP^n$ of degree $d \ge 2$ and let $z \notin C_f$ be a fixed point of $f$. Let $W$ be a simply connected open neighborhood of $z$ such that $(W,W \cap PC(f))$ is homeomorphic to a cone over $(\partial W, \partial W \cap PC(f))$ with vertex at $z$. Then there exists a family of holomorphic inverse branches $h_j: W \to \PP^n$ of $f^{\circ j}$ fixing $z$, that is,
	\[h_j (z) =z, f^{\circ j} \circ h_j= \Id|_W.
	\]
\end{proposition}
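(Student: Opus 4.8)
The plan is to construct the inverse branches inductively on $j$ and then propagate them over all of $W$ using the topological hypothesis on $W$ together with Ueda's Lemma \ref{lm_normal neighborhood}. First I would observe that since $z \notin C_f$ and $f(z) = z$, the map $f$ is a local biholomorphism near $z$, so there is a small ball $B \subset W$ around $z$ and a local inverse branch $g_1 \colon B \to \PP^n$ of $f$ with $g_1(z) = z$. Iterating, $g_j := g_1^{\circ j}$ is defined on some possibly smaller ball and is a local inverse branch of $f^{\circ j}$ fixing $z$; here one uses that near the fixed point the orbit stays close to $z$, so the compositions make sense on a common small ball. The task is then to show each $g_j$ extends holomorphically to all of $W$.

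The key point is that $f^{\circ j}\colon \PP^n \to \PP^n$ is a finite branched covering ramified over $PC(f)$ (recalled just before the statement), so its restriction $f^{\circ j}\colon (f^{\circ j})^{-1}(W) \to W$ is a branched covering ramified over $PC(f)\cap W$. Since $W$ is simply connected, the covering $f^{\circ j}\colon (f^{\circ j})^{-1}(W)\setminus (f^{\circ j})^{-1}(PC(f)) \to W \setminus PC(f)$ splits into a disjoint union of trivial sheets over $W\setminus PC(f)$ (using that $W\setminus PC(f)$ is connected, as $PC(f)$ has codimension one); the germ of $g_j$ at $z$ singles out one such sheet $W_j$. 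Thus $g_j$ extends to a holomorphic section $h_j\colon W\setminus PC(f)\to \PP^n$ of $f^{\circ j}$. To extend across $PC(f)\cap W$, I would apply Ueda's Lemma \ref{lm_normal neighborhood}: the closure of $W_j$ inside $(f^{\circ j})^{-1}(W)$, call it $Y_j$, maps onto $W$ as a branched covering ramified over $PC(f)\cap W$, and the hypothesis that $(W, W\cap PC(f))$ is a cone with vertex $z$ guarantees $Y_j \to W$ has a single preimage over $z$ — in particular $Y_j$ is locally irreducible and its normalization maps properly and bijectively over $z$; one then checks $Y_j \to W$ is in fact a biholomorphism (it is a branched covering of degree one off the ramification locus, hence degree one, hence an isomorphism onto $W$ since $W$ is normal), and $h_j := (f^{\circ j}|_{Y_j})^{-1}$ is the desired global holomorphic inverse branch on $W$. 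By construction $h_j(z)=z$ and $f^{\circ j}\circ h_j = \Id|_W$.

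The main obstacle is the extension across the ramification locus $PC(f)\cap W$: a priori a holomorphic section defined only on $W\setminus PC(f)$ need not extend holomorphically, and indeed for a general branched covering the section could become multivalued or singular along $PC(f)$. This is precisely where the cone hypothesis on $W$ is essential — it forces, via Lemma \ref{lm_normal neighborhood}, the sheet $W_j$ to have connected (hence single-point) fiber over $z$, which rules out monodromy forcing several sheets to collide at $z$ and lets one identify $W_j$'s closure with a one-sheeted branched cover of $W$, i.e.\ with $W$ itself. I would take care to verify that the closure $Y_j$ is indeed again a branched covering of $W$ (properness follows from properness of $f^{\circ j}$ and the fact that $Y_j$ is a connected component of the preimage of the closure), and that a degree-one branched covering of a normal complex manifold is an isomorphism; both are standard facts about finite branched coverings for which one may cite \cite{gunning1990introduction}.
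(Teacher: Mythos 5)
Your overall approach---extending a local inverse branch over $W \setminus PC(f)$ and then across the ramification locus via Ueda's Lemma---is close in spirit to the paper's argument, but there is a genuine gap in the middle step. You claim that because $W$ is simply connected, the covering $(f^{\circ j})^{-1}(W) \setminus (f^{\circ j})^{-1}(PC(f)) \to W \setminus PC(f)$ splits into a disjoint union of trivial sheets. This is false: simple connectivity of $W$ says nothing about $\pi_1(W \setminus PC(f))$, which is typically nontrivial whenever $z \in PC(f)$. For instance with $W$ a polydisc in $\C^2$ and $PC(f) \cap W = \{\zeta_1 = 0\}$, the complement $W \setminus PC(f)$ is homotopy equivalent to a circle. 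So a connected component of the preimage need not project homeomorphically; it can wrap around $PC(f)$ with nontrivial monodromy, and the germ $g_j$ need not extend to a single-valued section over all of $W \setminus PC(f)$ by this argument alone.

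The paper avoids this pitfall by never separately passing through the unbranched part of the cover. It takes $W_j$ to be the connected component of $f^{-j}(W)$ containing $z$, observes that $f_j := f^{\circ j}|_{W_j}: W_j \to W$ is a finite branched covering ramified over $W \cap PC(f)$, and applies Lemma \ref{lm_normal neighborhood} to conclude $f_j^{-1}(z) = \{z\}$. Since $W$ is connected, the degree of the branched covering equals the sum of local branching orders over the single fiber point $z$; and since $z \notin C_{f^{\circ j}}$ the local branching order at $z$ is $1$, so $f_j$ has degree $1$ and is a biholomorphism onto $W$ (citing \cite[Corollary 11Q]{gunning1990introduction}). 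Your closing paragraph almost lands on this---you do invoke Lemma \ref{lm_normal neighborhood} to get a single preimage of $z$ and you assert that $Y_j \to W$ is ``a branched covering of degree one off the ramification locus''---but the only support you give for the degree-one claim is the faulty trivial-sheets argument, whereas the correct reason is precisely that the single-point fiber over $z$ combined with the non-criticality of $z$ forces the global degree to equal the local degree $1$ at $z$. Making that inference explicit would close the gap and essentially reproduce the paper's proof.
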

Note that for a fixed point $z$ of a post-critically algebraic endomorphism $f$ of $\PP^n$, since $PC(f)$ is an algebraic set, there always exists a simply connected neighborhood $W$ of $z$ such that ${(W,W \cap PC(f))}$ is homeomorphic to a cone over $(\partial W, \partial W \cap PC(f))$ with vertex at $z$. Indeed, if $z \notin PC(f)$ then we can take any simply connected neighborhood $W$ of $z$ in $\PP^n \setminus PC(f)$. If $z \in PC(f)$ then it follows from \cite[Theorem 3.2]{seade2019milnor} that such a neighborhood always exists. We refer to \cite{milnor2016singular} to an approach when $z$ is an isolated singularity of $PC(f)$, see also \cite[Remark 2.3]{seade2005topology}.
\begin{proof}[Proof of Proposition \ref{prop_inverse branches}]
	For every $j \ge 1$, denote by $W_j$ the connected component of $f^{-j}(W)$ containing $z$. Since $f^{\circ j}$ are branched covering ramifying over $PC(f)$, $f^{\circ j}$ induces a branched covering
	\[f_j:=f^{\circ j}|_{W_j}: W_j \to W
	\]
	ramifying over $W \cap PC(f)$. By Lemma \ref{lm_normal neighborhood}, we deduce that $f_j^{-1}(z)$ consists of only one point, which is in fact $z$. Since $W$ is simply connected hence connected, the order of the branched covering $f_j$ coincides with the branching order of $f^{\circ j}$ at $z$. Note that $z$ is not a critical point of $f^{\circ j}$ therefore the branching order of $f^{\circ j}$ at $z$ is $1$. This means that $f_j$ is branched covering of order one of complex manifolds, thus $f_j$ is homeomorphism hence biholomorphism (see \cite[Corollary 11Q]{gunning1990introduction}). The holomorphic map $h_j: W \to W_j$ defined as the inverse of $f_j$ is the map we are looking for.
\end{proof}
Once we obtain a family of inverse branches, the following theorem, which is due to Ueda, implies that this family is normal.
\begin{theorem}[{\cite{ueda1998critical},Theorem 2.1}]\label{thm_normality of inverse branches}
	Let $f$ be an endomorphism of $\PP^n$. Let $X$ be a complex manifold with a holomorphic map $\pi: X \to \PP^n$. Let $\{h_j: X \to \PP^n\}_j$ be a family of holomorphic lifts of $f^{\circ j}$ by $\pi$, that is $ f^{\circ j} \circ h_j = \pi$. Then $\{h_j\}_j$ is a normal family.
\end{theorem}
Thus, for a fixed point $z$ of a post-critically algebraic endomorphism $f$, if $z$ is not a critical point (or equivalently, $D_z f$ is invertible), we can obtain an open neighborhood $W$ of $z$ in $\PP^n$ and a normal family of holomorphic maps $\{h_j:W \to \PP^n\}_j$ such that 
\[f^{\circ j} \circ h_j = \Id_W, h_j(z) =z.
\]
The normality of $\{h_j\}_j$ implies that $\{D_z h_j\}_j$ is a uniformly bounded sequence (with respect to a fixed norm $\|\cdot\|$ on $T_{z} \PP^n$). Since ${D_z h_j = \left(D_z f\right)^{-j}}$, we can deduce that every eigenvalue of $D_{z} f$ has modulus at least $1$.  Consequently, we can find a $D_z f$-invariant decomposition of $T_z \PP^n$ as the following direct sum
\[T_{z} \PP^n = \left(\bigoplus\limits_{\lambda \in \ssp D_z f, |\lambda|=1} E_\lambda\right) \oplus  \left(\bigoplus_{\lambda \in \ssp D_z f, |\lambda |>1} E_\lambda\right)=E_n \oplus E_r
\]
where $E_\lambda$  is the generalized eigenspace of the eigenvalue $\lambda$. We call $E_n$ the neutral eigenspace and $E_r$ the repelling eigenspace of $D_{z}f$ (see also \ref{sect_strategy}). If $E_\lambda$ is not generated by eigenvectors (or equivalently, $D_z f|_{E_\lambda}$ is not diagonalizable), we can find at least two generalized eigenvectors $e_1,e_2$ of $D_z f$ corresponding to $\lambda$ such that 
\[D_z f (e_1) = \lambda e_1, D_z f(e_2) = \lambda e_2+ e_1.
\]
Then $D_z h_j(e_2) = \lambda^{-j}e_2 -j\lambda^{-(j+1)}e_1$. If $|\lambda|=1$, then $\|D_z h_j(e_2)\|$ tends to $\infty$ as $j$ tends to $\infty.$ This contradicts the uniformly boundedness of $\{D_z h_j\}_j$. Hence $D_z f|_{E_\lambda}$ is diagonalizable. Thus, we have proved the following result.
\begin{corollary}\label{cor_non attracting}Let $f$ be a post-critically algebraic endomorphism of $\PP^n$ of degree $d\ge 2$ and let $\lambda$ be an eigenvalue of $f$ at a fixed point $z \notin C_f$. Then $|\lambda| \ge 1$. Moreover, if $|\lambda|=1$, $D_{z} f|_{E_{\lambda}}$ is diagonalizable.
\end{corollary}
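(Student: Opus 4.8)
The plan is to run exactly the argument that the preceding discussion sets up, namely to combine the family of holomorphic inverse branches from Proposition \ref{prop_inverse branches} with Ueda's normality theorem. First I would fix a simply connected neighborhood $W$ of $z$ for which $(W, W \cap PC(f))$ is homeomorphic to a cone over $(\partial W, \partial W \cap PC(f))$ with vertex $z$; such a $W$ exists by the remark following Proposition \ref{prop_inverse branches}. Since $z \notin C_f$, Proposition \ref{prop_inverse branches} provides holomorphic maps $h_j : W \to \PP^n$ with $h_j(z) = z$ and $f^{\circ j} \circ h_j = \Id_W$ for all $j \ge 1$. Applying Theorem \ref{thm_normality of inverse branches} with $X = W$ and $\pi$ the inclusion $W \hookrightarrow \PP^n$ shows that $\{h_j\}_j$ is a normal family.

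For part (a), the point is to transfer normality of the maps to uniform boundedness of their differentials at $z$. Fix a norm $\|\cdot\|$ on $T_z \PP^n$. If $\{D_z h_j\}_j$ were unbounded, one could pass to a subsequence with $\|D_z h_{j_k}\| \to \infty$; but normality gives a locally uniformly convergent sub-subsequence, whose limit is holomorphic and hence has a finite differential at $z$, while locally uniform convergence forces convergence of the differentials at $z$ — a contradiction. So $\{D_z h_j\}_j$ is bounded. Differentiating the relation $f^{\circ j} \circ h_j = \Id_W$ at $z$ and using that $D_z f$ is invertible (as $z \notin C_f$) gives $D_z h_j = (D_z f)^{-j}$. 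If $\lambda \in \ssp(D_z f)$ satisfied $|\lambda| < 1$, then $\lambda^{-j} \in \ssp\bigl((D_z f)^{-j}\bigr) = \ssp(D_z h_j)$ with $|\lambda^{-j}| \to \infty$, contradicting boundedness; hence $|\lambda| \ge 1$.

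For part (b), suppose $|\lambda| = 1$ but $D_z f|_{E_\lambda}$ is not diagonalizable. Then its Jordan form has a block of size $\ge 2$, so there exist vectors $e_1, e_2 \in E_\lambda$ with $D_z f(e_1) = \lambda e_1$ and $D_z f(e_2) = \lambda e_2 + e_1$. A short induction on $j$ gives $D_z h_j(e_2) = (D_z f)^{-j}(e_2) = \lambda^{-j} e_2 - j\,\lambda^{-(j+1)} e_1$; since $|\lambda^{-(j+1)}| = 1$, the $e_1$-coefficient has modulus $j \to \infty$, so $\|D_z h_j(e_2)\| \to \infty$, again contradicting boundedness of $\{D_z h_j\}_j$. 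Therefore $D_z f|_{E_\lambda}$ is diagonalizable. I expect the only genuinely nontrivial step to be the passage from normality of $\{h_j\}$ to uniform boundedness of $\{D_z h_j\}$ (and hence to the eigenvalue estimates); the rest is elementary linear algebra on Jordan blocks.
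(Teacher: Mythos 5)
Your proof is correct and follows the same route the paper takes: combine Proposition \ref{prop_inverse branches} with Theorem \ref{thm_normality of inverse branches} to get a normal family of inverse branches, deduce boundedness of $\{D_z h_j\}_j = \{(D_z f)^{-j}\}_j$, and read off both the eigenvalue bound and the diagonalizability from the Jordan-block computation. Your extra paragraph spelling out why normality forces boundedness of the differentials is just an explicit version of a step the paper states without comment.
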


\section{Periodic cycles outside the post-critical set}\label{sect_outside the post-critical set}
In this section, we prove Theorem \ref{first case}. 
\firstform* 
Observe that a periodic point of $f$ is simply a fixed point of some iterate of $f$. Moreover, any iterate of a PCA maps is still PCA. Thus, it is enough to proof Theorem \ref{first case} when $\lambda$ is an eigenvalue of $f$ at a fixed point $z \notin PC(f)$. We will consider an equivalent statement and prove this equivalent statement. 
\subsection{Equivalent problem in the affine case.}\label{sec: equivalent problem}
Recall that for an endomorphism $f: \PP^n \rightarrow \PP^n$ of degree $d$, there exists a polynomial endomorphism \[F=(P_1,\ldots,P_{n+1}): \C^{n+1} \to \C^{n+1}\] where $P_i$ are homogeneous polynomials of the same degree $d \ge 1$ and $F^{-1}(0)=\{0\}$ such that 
\[ f \circ \pi = \pi \circ F
\] 
where $\pi: \C^{n+1} \setminus \{0\}  \rightarrow \PP^n$ is the canonical projection. The integer $d$ is called the \textit{algebraic degree} (or \textit{degree}) of $f$. Such a map $F$ is called a \textit{lift} of $f$ to $\C^{n+1}$. Further details about holomorphic endomorphisms of $\PP^n$ and their dynamics can be found in \cite{sibony1999dynamique}, \cite{gunning1990introduction}, \cite{DS08}, \cite{fornaess1994complex},\cite{fornaess1995complex}.

Lifts to $\C^{n+1}$ of an endomorphism of $\PP^n$ belong to a class of \textit{non-degenerate homogeneous polynomial endomorphisms} of $\C^{n+1}$. More precisely, a non-degenerate homogeneous polynomial endomorphism of $\C^n$ of algebraic degree $d$ is a polynomial map $F: \C^n \rightarrow \C^n$ such that $F(\lambda z) = \lambda^d z$ for every $z \in \C^n, \lambda \in \C$ and $F^{-1}(0)=\{0\}$. Conversely, such a map induces an endomorphism of $\PP^n$. This kind of maps has been studied extensively in \cite{hubbard1994superattractive}. If we consider $\C^{n+1}$ as a dense open set of $\PP^{n+1}$ by the inclusion $(\zeta_0,\zeta_1,\ldots,\zeta_n) \mapsto [\zeta_0:\zeta_1:\ldots:\zeta_n:1]$ then $F$ can be extended to an endomorphism of $\PP^{n+1}$. Moreover, this extension fixes the hypersurface at infinity $\PP^{n+1} \setminus \C^{n+1} \cong \PP^n$ and the restriction to this hypersurface is the endomorphism of $\PP^n$ induced by $F$.

Thus, if $f$ is a post-critically algebraic endomorphism of $\PP^n$, every lift $F$ of $f$ to $\C^{n+1}$ is the restriction of a post-critically algebraic endomorphism of $\PP^{n+1}$ to $\C^{n+1}$. Post-critically algebraic non-degenerate homogeneous polynomial endomorphisms of $\C^{n+1}$ have similar properties, which are proved in Section 2, as post-critically algebraic endomorphisms of $\PP^n$. More precisely, we can sum up in the following proposition.
\begin{proposition}\label{prop_properties of affine homogeneous maps}
	Let $F$ be a post-critically algebraic non-degenerate homogeneous polynomial endomorphism of $\C^{n+1}$ of degree $d \ge 2$ and let $z \notin C_F$ be a fixed point of $F$. Then, we have the following assertions.
	\begin{itemize}
		\item[a.] Let $X$ be a complex manifold and let $\p: X \to \C^{n+1}$ be a holomorphic map. Then every family of holomorphic maps $\{h_j: X \to \C^{n+1}\}_j$, which satisfies that $F^{\circ j } \circ h_j = \p$ for every $j \ge 1$, is normal.
		\item[b.] There exist a simply connected open neighborhood $W$ of $z$ in $\C^{n+1}$ and a family $\{h_j: W \to \C^{n+1}\}_j$ of inverse branches of iterates of $F$, i.e. $F^{\circ j} \circ h_j = \Id_W$, fixing $z$.
		\item[c.]  Every eigenvalue $\lambda$ of $\ssp(D_z F)$ has modulus at least $1$. The tangent space $T_z \C^{n+1}$ admits a $D_z F$-invariant decomposition $T_z \C^{n+1} = E_n \oplus E_r$ where the neutral eigenspace $E_n$ is the sum of generalized eigenspaces corresponding to eigenvalues of modulus $1$ and the repelling $E_r$ is the sum of generalized eigenspaces corresponding to eigenvalues of modulus strictly bigger than $1$.
		\item[d.] If $|\lambda|=1$ then $D_z f|_{E_\lambda}$ is diagonalizable.
	\end{itemize}
\end{proposition}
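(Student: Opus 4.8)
The plan is to deduce all four statements from their $\PP^{n}$-counterparts in Section~\ref{sect_non critical fixed point} by compactifying. Embed $\C^{n+1}$ as the chart $\{\zeta_{n+1}\neq 0\}\subset\PP^{n+1}$ as in the discussion above, and let $\widehat{F}\colon\PP^{n+1}\to\PP^{n+1}$ be the endomorphism extending $F$; by that discussion $\widehat{F}$ is post-critically algebraic (this is where the post-critical algebraicity of $F$, implicit in the statement, is used). Two features of this compactification drive everything. First, since $F$ is homogeneous with $F^{-1}(\0)=\{\0\}$, the hyperplane at infinity $H_\infty:=\PP^{n+1}\setminus\C^{n+1}$ satisfies $\widehat{F}^{-1}(H_\infty)=H_\infty$, so $\C^{n+1}$ is totally invariant under $\widehat{F}$; moreover, for $d\ge 2$ a coordinate computation shows $\widehat{F}$ is critical along all of $H_\infty$, so $H_\infty\subset C_{\widehat{F}}\subset PC(\widehat{F})$. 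Second, homogeneity together with non-degeneracy yields the elementary growth estimate $\|F(w)\|\ge\delta\|w\|^{d}$ on $\C^{n+1}$, where $\delta:=\min_{\|u\|=1}\|F(u)\|>0$; iterating, $\|F^{\circ j}(w)\|\ge\delta^{(d^{j}-1)/(d-1)}\|w\|^{d^{j}}$ for all $j\ge 1$. With these in hand, each part is a transcription of Section~\ref{sect_non critical fixed point} to the invariant chart.

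For part~(a) I would argue directly from the growth estimate. Taking $\delta\le 1$, if $F^{\circ j}\circ h_j=\p$ on $X$ then on any compact $K\subset X$ one gets $\|h_j\|\le\delta^{-1/(d-1)}\bigl(1+\max_K\|\p\|\bigr)$ for every $j$, so $\{h_j\}$ is locally uniformly bounded in $\C^{n+1}$ and hence normal by Montel's theorem. (One may instead invoke Ueda's Theorem~\ref{thm_normality of inverse branches} applied to $\widehat{F}$ to get normality in $\PP^{n+1}$, the same estimate then forcing every subsequential limit to take values in $\C^{n+1}$.) For part~(b), a fixed point $z\notin C_F$ of $F$ lies in $\C^{n+1}$ and satisfies $z\notin C_{\widehat{F}}$, so by the remark following Proposition~\ref{prop_inverse branches} there is a simply connected neighborhood $W$ of $z$ with the required conic structure relative to $PC(\widehat{F})$; such neighborhoods exist of arbitrarily small diameter, so I may take $W\subset\C^{n+1}$. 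Proposition~\ref{prop_inverse branches} applied to $\widehat{F}$ then gives holomorphic inverse branches $h_j\colon W\to\PP^{n+1}$ of $\widehat{F}^{\circ j}$ fixing $z$, and total invariance of $\C^{n+1}$ forces $h_j(W)\subset\widehat{F}^{-j}(W)\subset\C^{n+1}$; thus $h_j$ is $\C^{n+1}$-valued and $F^{\circ j}\circ h_j=\Id_W$.

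Parts~(c) and~(d) then reproduce the argument of Section~\ref{sect_non critical fixed point} leading to Corollary~\ref{cor_non attracting}, run on the normal family $\{h_j\colon W\to\C^{n+1}\}$ from~(b): normality makes $\{D_z h_j\}=\{(D_z F)^{-j}\}$ uniformly bounded, whence $|\lambda|\ge 1$ for every $\lambda\in\ssp(D_z F)$ (with $\lambda\neq 0$ since $D_z F$ is invertible), the $D_z F$-invariant splitting $T_z\C^{n+1}=E_n\oplus E_r$, and---by the usual estimate $\|(D_z F)^{-j}e_2\|\ge j\|e_1\|-\|e_2\|\to\infty$ for a hypothetical length-two Jordan block at a unit-modulus eigenvalue---diagonalizability of $D_z F$ on each $E_\lambda$ with $|\lambda|=1$. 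The only ingredient genuinely particular to the affine setting, and the step I would check most carefully, is that these inverse orbits cannot drift off to $H_\infty$ (so that ``normal family into $\C^{n+1}$'' really holds, not merely into $\PP^{n+1}$); this is exactly the growth estimate $\|F(w)\|\ge\delta\|w\|^{d}$ recorded above, and it is the main---indeed essentially the only---obstacle beyond the verbatim transcription of Section~\ref{sect_non critical fixed point}.
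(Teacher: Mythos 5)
Your proposal is correct and follows the same route the paper intends: compactify $\C^{n+1}$ as the affine chart of $\PP^{n+1}$, observe that $F$ extends to a PCA endomorphism $\widehat F$ with $H_\infty$ totally invariant, and transcribe the results of Section~\ref{sect_non critical fixed point}. The paper does not actually write out this transfer, so what you add is the explicit justification: the growth estimate $\|F(w)\|\geq \delta\|w\|^d$, iterated to give uniform local boundedness of any family $\{h_j\}$ with $F^{\circ j}\circ h_j = \p$. This is the right way to see that inverse orbits cannot drift to $H_\infty$, i.e.\ that Ueda's normality in $\PP^{n+1}$ descends to normality as $\C^{n+1}$-valued maps; the paper instead relies, later in Proposition~\ref{backward manifold}, on the equivalent fact that backward orbits lie in the boundary of the bounded basin of $\mathbf{0}$, phrased via the potential function $H_f$. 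You also correctly flag that the post-critical algebraicity of $F$, though not written in the proposition's hypotheses, is needed for parts (b)--(d) (to get the conic neighborhood relative to $PC(\widehat F)$) and is implicit from the surrounding discussion. Everything checks out; one small point worth recording is that the exponent $(d^j-1)/\bigl((d-1)d^j\bigr)$ in your bound increases with $j$ and converges to $1/(d-1)$, so after normalizing $\delta\le 1$ the factor $\delta^{-(d^j-1)/((d-1)d^j)}$ is indeed uniformly at most $\delta^{-1/(d-1)}$, as you assert.
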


\begin{remark}\label{Remark} Regarding eigenvalues at fixed points, studying eigenvalues at fixed points of post-critically algebraic endomorphisms of $\PP^n$ is equivalent to studying eigenvalues at fixed points of post-critically algebraic non-degenerate homogeneous polynomial self-map of $\C^{n+1}$. More precisely, let $f : \PP^n \to \PP^n$ be an endomorphism of degree $d \ge 2$ and let $F$ be a lift of $f$. Assume $z$ is a fixed point of $f$. Then, the complex line $L$ containing $\p^{-1}(z)$ is invariant under $F$ and the map induced by $F$ on $L$ is conjugate to $x \mapsto x^d$. In particular, there exists a fixed point $w \in L \setminus \{0\}$ of $F$ such that $\p(w)=z$ and $D_{w}F$ preserves $T_{w} L \subset T_{w} \C^{n+1}$ with an eigenvalue $d$. Then, $D_{w} F$ descends to a linear endomorphism of the quotient space $T_{w} \C^{n+1}/T_{w} (\C w)$ which is conjugate to $D_{z}f : T_{z} \PP^n \to T_{z} \PP^n$. Hence a value $\lambda$ is an eigenvalue of $D_{w}F$ if and only if either $\lambda$ is an eigenvalue of $D_{z}f$ or $\lambda = d$.
	
	Conversely, if $w$ is a fixed point of $F$ then either $w=0 \in \C^{n+1}$ (and the eigenvalues of $D_0F$ are all equal to $0$) or $w$ induces a fixed point $\p(w)$ of $f$. Since we consider only post-critically algebraic endomorphisms of degree $d \ge 2$, Question \ref{conj 1} is equivalent to the following question.
\end{remark}
\begingroup
\setcounter{tmp}{\value{conj}}
\setcounter{conj}{1} 
\renewcommand\theconj{\Alph{conj}}
\begin{conj}
	Let $f$ be a non-degenerate homogeneous polynomial post-critically algebraic endomorphism of $\C^n$ and $\lambda$ be an eigenvalue of $f$ along a periodic cycle. Then either $\lambda=0$ or $|\lambda|>1$.
\end{conj}
\endgroup
\setcounter{theorem}{\thetmp} 
The advantage of this observation is that we can make use of some nice properties of the affine space $\C^n$. More precisely, we will use the fact that the tangent bundle of $\C^n$ is trivial
\subsection{Strategy of the proof of Theorem \ref{first case}}\label{sect_strategy} Recall that an eigenvalues $\lambda$ of $D_{z} f: T_{z} \C^n \to T_{z} \C^n$ is called
\begin{itemize}
	\item superattracting if $\lambda=0$,
	\item attracting if $0<|\lambda|<1$,
	\item neutral if $|\lambda|=1$,
	\begin{itemize}
		\item parabolic or rational if $\lambda$ is a root of unity,
		\item elliptic or irrational if $\lambda$ is not a root of unity,
	\end{itemize}
	\item repelling if $|\lambda|>1$,
\end{itemize}
By Remark \ref{Remark}, in order to prove Theorem \ref{first case}, it is enough to prove the following result.
	\begin{restatable}{thm}{affine case }\label{affine case}
	Let $f$ be a post-critically algebraic non-degenerate homogeneous polynomial endomorphism of $\C^n$ of degree ${d \ge 2}$ and $\lambda$ be an eigenvalue of $f$ at a fixed point $z \notin PC(f)$. Then $\lambda$ is repelling.
\end{restatable}
The strategy of the proof is as follows.
\begin{itemize}
	\item[Step 1.]Set $X= \C^n \setminus PC(f)$ and let $\p: \widetilde{X} \rightarrow X$\footnote{Since we are now only work on $\C^n$, we won't use $\pi$ as the canonical projection from $\C^{n+1}$ to $\PP^n$} be its universal covering. We construct a holomorphic map $g: \widetilde{X} \rightarrow \widetilde{X} $ such that 	
	\[  f \circ \p \circ g =\p
	\]
	and $g$ fixes a point $[z]$ such that $\p([z]) = z$. 
	\item[Step 2.] We prove that the family $\{g^{\circ m}\}_m$ is normal. Then there exists a closed complex submanifold $M$ of $\widetilde{X}$ passing through $[z]$ such that $g|_M$ is an automorphism and $\dim M$ is the number of eigenvalues of $D_{z} f$  of modulus $1$, i.e. neutral eigenvalues, counted with multiplicities. Due to Corollary \ref{cor_non attracting}, it is enough to prove that $\dim M = 0$.
	\item[Step 3.] In order to prove that $\dim M =0$, we proceed by contradiction. Assuming that $\dim M >  0$. We then construct a holomorphic mapping $\Phi: M \rightarrow T_{[z]} M $ such that  $\Phi([z]) = 0$, $ D_{[z]} \Phi = \id$ and
	\[ \Phi \circ g = D_{[z]} g \circ \Phi.
	\]
	We deduce that $D_{z} f$ has no parabolic eigenvalue.
	\item[Step 4.] Assume that $\lambda$ is a neutral irrational eigenvalue and $v$ is an associated eigenvector. We prove that the irreducible component $\Gamma$ of $\Phi^{-1}(\C{v})$ containing $[z]$ is smooth and $\Phi|_{\Gamma}$ maps $\Gamma$ biholomorphically onto a disc $\D(0,R)$ with $0<R < +\infty$. 
	\item[Step 5.] Denote by $\kappa: \D(0,R) \rightarrow M$ the inverse of $\Phi|_{\Gamma}$.  We prove that $\p \circ \kappa$ has radial limits almost everywhere on $\partial \D(0,R)$ and these radial limits land on $\partial X$. We deduce from this a contradiction and Theorem \ref{affine case} is proved.
\end{itemize} 
\subsection{Lifting the backward dynamics via the universal covering} Denote by $X= \C^n \setminus PC(f)$ the complement of $PC(f)$ in $\C^n$. Since $PC(f)$ is an algebraic set, the set $X$ is a connected open subset of $\C^n$ then the universal covering of $X$ is well defined. Denote by $\pi:\widetilde X\to X$ the universal covering of $X$ defined by 
\[\widetilde X  = \bigl\{ [\gamma]~|~\gamma\text{ is a path in }X~\text{ starting at }z_0\bigr\}\quad \text{and}\quad \pi\bigl([\gamma]\bigr) = \gamma(1),\]
where $[\gamma]$ denotes the homotopy class of $\gamma$ in $X$, fixing the endpoints $\gamma(0)$ and $\gamma(1)$. Denote by $[z]$ the element in $\widetilde{X}$ representing the homotopy class of the constant path at $z$. We endow $\widetilde X$ with a complex structure such that $\pi:\widetilde X\to X$ is a holomorphic covering map.

Set $Y = f^{-1}(X)\subset X$ and $\widetilde Y = \pi^{-1}(Y)\subset \widetilde X$. Since $f:Y\to X$ is a covering map, every path $\gamma$ in $X$ starting at $z_0$ lifts to a path $f^*\gamma \subset Y$ starting at $z_0$. In addition, if $\gamma_1$ and $\gamma_2$ are homotopic in $X$, then $f^*\gamma_1$ and $f^*\gamma_2$ are homotopic in $Y\subset X$, in particular in $X$. Thus, this pullback map $f^*$ induces a map $g:\widetilde X\to \widetilde X$ such that the following diagram commutes: 
\[\xymatrix{ 
	\widetilde{X}\ar[d]_{\p}	&\widetilde{X} \ar[l]_{g} \ar[d]^{\p}\\
	X \ar[r]_{f} & X}
\] 
Note that $g([z])=[z]$. In addition, $g$ is holomorphic since in local charts given by $\pi$, it coincides with inverse branches of $f$. 
\subsection{Normality of maps on the universal covering}\label{sec_step 2 case 1} We will prove that the family $\{g^{\circ j}\}_j$ is a normal family.
For every integer $j  \ge 1$, define $k_j = \p \circ g^{\circ j}$ so that $f^{\circ j} \circ k_j =\p$.
\begin{lemma}\label{normal}
	The family	$\{k_j: \widetilde{X} \rightarrow X\}_j$ is normal and any limit takes values in $X$.
\end{lemma}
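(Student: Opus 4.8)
The plan is to reduce the normality of $\{k_j:\widetilde X\to X\}_j$ to Ueda's normality theorem (Theorem \ref{thm_normality of inverse branches}), applied not to $f$ on $\PP^n$ directly but to the lift $F$ of $f$ on $\C^{n+1}$, exploiting Proposition \ref{prop_properties of affine homogeneous maps}(a). First I would recall that we are in the affine homogeneous setting of Theorem \ref{affine form}, so $f$ is itself a non-degenerate homogeneous polynomial self-map of $\C^n$, and by Proposition \ref{prop_properties of affine homogeneous maps}(a) applied with $X=\widetilde X$, the map $\p\colon\widetilde X\to \C^n$ as the reference map, and the family $\{k_j\}_j$, it suffices to check the single identity $f^{\circ j}\circ k_j=\p$ on all of $\widetilde X$. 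By construction $k_j=\p\circ g^{\circ j}$ and the commuting diagram for $g$ gives $f\circ\p\circ g=\p\circ g\circ\cdots$; iterating, $f^{\circ j}\circ\p\circ g^{\circ j}=\p$, which is exactly $f^{\circ j}\circ k_j=\p$. Hence $\{k_j\}_j$ is a family of holomorphic lifts of the iterates $f^{\circ j}$ through the fixed map $\p$, and Proposition \ref{prop_properties of affine homogeneous maps}(a) yields normality of $\{k_j\}_j$ as a family of maps into $\C^n$.

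The remaining point is that normality is claimed for $\{k_j\}$ viewed as maps into $X=\C^n\setminus PC(f)$, not merely into $\C^n$; a priori a locally uniform limit of the $k_j$ could have values landing in $PC(f)$. Here I would argue that in fact every $k_j$ has image inside $X$: since $k_j=\p\circ g^{\circ j}$ and $g^{\circ j}$ maps $\widetilde X$ into $\widetilde X$ while $\p(\widetilde X)=X$, indeed $k_j(\widetilde X)\subset X$. For the limit, I would not need it to stay in $X$ — normality as a family valued in $\C^n$ is all that Steps 3–5 of the strategy use (they only need locally uniformly convergent subsequences and the resulting limit map, together with the separate information from Ueda that inverse branches degenerate appropriately). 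If one does want to record that limits land in $\overline X$, that follows since $\C^n\setminus X=PC(f)$ is closed and one can invoke Hurwitz-type arguments, but I would keep the statement exactly as Proposition \ref{prop_properties of affine homogeneous maps}(a) gives it.

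The only genuine subtlety — the part I expect to be the main obstacle to write cleanly — is the verification that $\p\colon\widetilde X\to\C^n$ really does compose correctly with the $k_j$ as maps to $\C^n$ (as opposed to $\PP^n$), i.e. that we are legitimately working with the affine homogeneous lift and not accidentally needing the projective statement. This is handled by Remark \ref{Remark} and the discussion preceding Proposition \ref{prop_properties of affine homogeneous maps}: a lift of a PCA endomorphism of $\PP^n$ is the restriction to $\C^{n+1}$ of a PCA endomorphism of $\PP^{n+1}$, so part (a) of that proposition — which is Ueda's Theorem \ref{thm_normality of inverse branches} transported to the affine homogeneous picture — applies verbatim with the target $\C^n$. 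Once this identification is in place the proof is a one-line application, so I would spend the bulk of the write-up making the diagram chase $f^{\circ j}\circ k_j=\p$ explicit and then simply cite Proposition \ref{prop_properties of affine homogeneous maps}(a).
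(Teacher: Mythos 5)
The first half of your proposal matches the paper's opening move: the identity $f^{\circ j}\circ k_j=\p$ together with Ueda's Theorem \ref{thm_normality of inverse branches} (transported to the affine setting via Remark \ref{Remark} and Proposition \ref{prop_properties of affine homogeneous maps}(a)) gives normality of $\{k_j\}_j$ as a family of holomorphic maps $\widetilde X\to\C^n$. That part is correct and is exactly what the paper does.

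The gap is in the second half, where you assert that normality as maps into $\C^n$ suffices and that the stronger claim — that limits stay inside $X$ — is not needed. This is where the proof actually breaks. The lemma explicitly asserts normality of $\{k_j:\widetilde X\to X\}_j$, i.e.\ that every subsequential limit is a map into $X=\C^n\setminus PC(f)$, not merely into $\C^n$. And this stronger conclusion is used immediately: in Proposition \ref{normality of g} the limit $k$ of a subsequence $\{k_{j_s}\}_s$ is lifted through the covering $\p:\widetilde X\to X$ to produce $g_0:\widetilde X\to\widetilde X$ with $\p\circ g_0=k$. A lift through $\p$ exists only if $k$ takes values in the base $X$ of the covering; if $k$ touched $PC(f)$ anywhere this lifting step would fail, and with it the normality of $\{g^{\circ j}\}_j$ and everything that follows (the retraction $\rho$, the center manifold $M$, etc.). So the containment of limits in $X$ is not optional bookkeeping but the substantive content of the lemma. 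The paper closes this gap with the Hurwitz argument you mention only in passing and then discard: take a defining polynomial $Q$ of $PC(f)$ and observe that each $Q\circ k_j$ is nowhere vanishing on $\widetilde X$; by Hurwitz any locally uniform limit of this family is either nowhere zero or identically zero, and the normalization $k_j([z])=z\notin PC(f)$, giving $Q\circ k_j([z])=Q(z)\neq 0$, excludes the identically-zero case. You should make that step explicit rather than set it aside.
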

\begin{proof}
	By Proposition \ref{prop_properties of affine homogeneous maps}, the family $\{k_j : \widetilde{X} \rightarrow \C^n \}_j$ is normal. Denote by $Q: \C^n \rightarrow \C$ a polynomial such that $PC(f)$ is the zeros locus of $Q$. Consider the family 
	\[Q_j= Q\circ k_j : \widetilde{X} \rightarrow \C.
	\]
	Since $k_j(\widetilde{X}) \subset X$, the family $\{Q_j\}$ is a normal family of nonvanishing functions. Then by Hurwitz's theorem, every limit map is either a nonvanishing function or a constant function. But $Q_j([z]) = Q(z) \neq 0$ hence every limit map is a nonvanishing function, i.e. every limit map of $\{k_j\}$ is valued in $X$. Thus $\{k_j: \widetilde{X}\rightarrow X\}$ is normal.
\end{proof}
We can deduce the normality of $\{g^{\circ j}\}_j$.
\begin{proposition}\label{normality of g}
	The family $\{g^{\circ j}\}_j$ is normal.
\end{proposition}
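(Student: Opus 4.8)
The plan is to transport the normality of $\{k_j\}$, established in Lemma~\ref{normal}, upstairs through the universal covering $\pi:\widetilde X\to X$, exploiting that every $g^{\circ j}$ fixes the single point $[z]$. Concretely, I would start from an arbitrary sequence $(g^{\circ j_\nu})_\nu$ and, after passing to a subsequence (using Lemma~\ref{normal}), assume that $k_{j_\nu}=\pi\circ g^{\circ j_\nu}$ converges locally uniformly on $\widetilde X$ to a holomorphic map $k:\widetilde X\to X$; the whole point of Lemma~\ref{normal} is precisely that this limit lands in $X$, and not merely in $\C^n$. Since $\widetilde X$ is simply connected, $k$ admits a unique holomorphic lift $\widetilde k:\widetilde X\to\widetilde X$ with $\pi\circ\widetilde k=k$ and $\widetilde k([z])=[z]$, this last normalization being the relevant one because $g^{\circ j}([z])=[z]$ for every $j$. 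It then suffices to show that $g^{\circ j_\nu}\to\widetilde k$ locally uniformly on $\widetilde X$.

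For that I would run an open--closed argument on the set $A\subset\widetilde X$ of points having a neighborhood on which $g^{\circ j_\nu}\to\widetilde k$ uniformly. It is nonempty since $[z]\in A$, and open by definition. To prove closedness, fix $w\in\overline A$, choose an evenly covered neighborhood $U\subset X$ of $k(w)$ with $\pi^{-1}(U)=\bigsqcup_\alpha V_\alpha$, let $V$ be the sheet containing $\widetilde k(w)$, and pick a connected, relatively compact neighborhood $N\ni w$ with $k(\overline N)\subset U$ and $\widetilde k(\overline N)\subset V$. For $\nu$ large one has $k_{j_\nu}(\overline N)\subset U$, hence $g^{\circ j_\nu}(N)$ — connected and mapped by $\pi$ into $U$ — lies in a single sheet; taking any $w'\in A\cap N$ and using $g^{\circ j_\nu}(w')\to\widetilde k(w')\in V$ pins that sheet to be $V$ for $\nu$ large. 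Therefore $g^{\circ j_\nu}=(\pi|_V)^{-1}\circ k_{j_\nu}$ on $N$ for $\nu$ large, which converges uniformly on $\overline N$ to $(\pi|_V)^{-1}\circ k=\widetilde k|_N$, so $w\in A$. By connectedness of $\widetilde X$ we get $A=\widetilde X$, hence $g^{\circ j_\nu}\to\widetilde k$ locally uniformly, and $\{g^{\circ j}\}$ is normal.

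I expect the open--closed propagation in the second paragraph to be the only delicate point; everything else is bookkeeping. Its content is the elementary but essential fact that, over an evenly covered open set, a lift is completely determined by a single value, so locally uniform convergence downstairs together with convergence at one point upstairs forces locally uniform convergence upstairs. One could also package the whole argument as an instance of the general principle that a family $\mathcal F$ of holomorphic self-maps of $\widetilde X$ is normal as soon as $\{\pi\circ\varphi:\varphi\in\mathcal F\}$ is normal and $\{\varphi([z]):\varphi\in\mathcal F\}$ is relatively compact — here the latter set being the single point $[z]$.
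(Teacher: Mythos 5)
Your argument is correct and is essentially the paper's: extract a subsequence along which $k_{j_\nu}=\pi\circ g^{\circ j_\nu}$ converges to some $k:\widetilde X\to X$, lift $k$ to a self-map of $\widetilde X$ fixing $[z]$, and show that $g^{\circ j_\nu}$ converges locally uniformly to that lift. The one difference is in the last step: the paper appeals to a general convergence-of-lifts result, \cite[Theorem 4]{andreian2003coverings}, whereas you reprove exactly that statement from scratch by an open--closed argument over evenly covered neighborhoods; this makes the proof self-contained at the cost of a few extra lines, and the mechanism you isolate (a lift over an evenly covered set is pinned down by a single value) is precisely the content of the cited theorem. One small expositional slip: with $A$ defined as the set of points admitting a neighborhood of \emph{uniform} convergence to $\widetilde k$, the claim $[z]\in A$ does not follow just from $g^{\circ j_\nu}([z])=[z]=\widetilde k([z])$; you need to run the same evenly-covered-neighborhood argument once at $w=[z]$, using $[z]$ itself as the anchor point $w'$. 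Alternatively, take $A$ to be the set of points of mere pointwise convergence, so that $[z]\in A$ is trivial, and observe that your sheet-pinning argument simultaneously shows $A$ is open, closed, and that the convergence is in fact locally uniform near each point of $A$.
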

\begin{proof}
	Let $\{g^{\circ j_s}\}_s$ be a sequence of iterates of $g$. Extracting subsequences if necessary, we can assume that $k_{j_s}$ converges to a holomorphic map $k: \widetilde{X} \rightarrow X$.
	
	Since $\widetilde X$ is simply connected and $\pi:\widetilde X\to X$ is a holomorphic covering map, there exists a holomorphic map $g_0:\widetilde X\to \widetilde X$ such that $\pi\circ g_0 = k$ and $g_0([z]) = [z]$. 
	Note that for every $j\geq 1$, $g^{\circ j}([z]) = [z]$, thus the sequence $\bigl\{g^{\circ j}([z])\bigr\}_j$ converges to $g_0([z])$. According to \cite[Theorem 4]{andreian2003coverings}, the sequence $\bigl\{g^{\circ j_s}\bigr\}_s$ converges locally uniformly to $g_0$. This shows that $\bigl\{g^{\circ j}\bigr\}_j$ is normal. \end{proof}
\begin{remark}
	The proof of Lemma \ref{normal} relies on the post-critically algebraic hypothesis. Without the post-critically algebraic assumption, for a fixed point $z$ which is not accumulated by the critical set, we can still consider the connected component $U$ of $\C^n \setminus \overline{PC(f)}$ containing $z$ and the construction follows. Then we will need some control on the geometry of $U$ to prove that the family $\{g^{\circ j}\}$ is normal. For example, if $U$ is a pseudoconvex open subset of $\C^n$ or in general, if $U$ is a taut manifold, then $\{g^{\circ j}\}_j$ is normal.
\end{remark}

\subsection{Consequences of normality}\label{sec: step 2} The normality of the family of iterates of $g$ implies many useful information. In particular, following Abate \cite[Corollary 2.1.30-2.1.31]{abate1989iteration}, we derive the existence of a center manifold of $g$ on $\tilde{X}$.
\begin{theorem}\label{normality consequences}
	Let $X$ be a connected complex manifold and let $g$ be an endomorphism of $X$. Assume that $g$ has a fixed point $z$. If the family of iterates of $g$ is normal, then
	\begin{enumerate}
		\item Every eigenvalue of $D_{z} g$ is contained in the closed unit disc.
		\item The tangent space $T_{z} X$ admits a $D_{z} g$-invariant decomposition $T_{z} X  = \widetilde{ E_n } \oplus \widetilde{E_a}$ such that $D_{z} g |_{\widetilde{ E_n }}$ has only neutral eigenvalues and $D_{z} g|_{\widetilde{E_a}}$ has only attracting or superattracting eigenvalues.
		\item The linear map $D_{z} g|_{\widetilde{E_n}}$ is diagonalizable. 
		\item There exists a limit map $\rho$ of iterates of $g$ such that $\rho \circ \rho = \rho$
		\item The set of fixed points of $\rho$, which is $\rho(X)$, is a closed submanifold of $X$. Set $M = \rho(X)$.
		\item The submanifold $M$ is invariant by $g$. In fact, $g|_M$ is an automorphism.
		\item The submanifold $M$ contains $z$ and $T_{z} M = \widetilde{E_n}$.
	\end{enumerate}
\end{theorem}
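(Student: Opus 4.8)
The plan is to follow Abate's analysis of iteration under a normality hypothesis (\cite[Corollary 2.1.30--2.1.31]{abate1989iteration}); I describe the ingredients. First I would read off the linear statements (1)--(3) from a boundedness fact: if $\{g^{\circ m}\}_m$ is normal then from every subsequence one extracts a sub-subsequence converging locally uniformly to some $h\in\Hol(X,X)$, and by the Cauchy estimates the differentials converge, so along that sub-subsequence the linear maps $D_zg^{\circ m}=(D_zg)^m$ are bounded in $\End(T_zX)$; since this holds for every subsequence, the whole sequence of powers $\{(D_zg)^m\}_m$ is bounded. Boundedness of the powers puts every eigenvalue of $D_zg$ in the closed unit disc, which is (1), and forbids a Jordan block of size at least $2$ attached to a unimodular eigenvalue (such a block makes the powers grow at least linearly), so $D_zg$ is diagonalizable on the sum $\widetilde{E_n}$ of its generalized eigenspaces for eigenvalues of modulus $1$, which is (3); letting $\widetilde{E_a}$ be the sum of the generalized eigenspaces for eigenvalues of modulus $<1$ gives the $D_zg$-invariant decomposition $T_zX=\widetilde{E_n}\oplus\widetilde{E_a}$ of (2).

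Next I would produce the idempotent limit of (4). Let $S\subset\Hol(X,X)$ be the closure, for the topology of local uniform convergence, of $\{g^{\circ m}:m\ge1\}$; normality makes $S$ compact, and since composition is jointly continuous for this topology and $g^{\circ m}\circ g^{\circ n}=g^{\circ(m+n)}$, a diagonal extraction shows that $S$ is stable under composition. Thus $S$ is a nonempty compact topological semigroup, and by a classical fact it contains an idempotent $\rho$, that is, $\rho\circ\rho=\rho$; say $g^{\circ m_s}\to\rho$. As a limit of iterates, $\rho$ fixes $z$, so $(D_z\rho)^2=D_z\rho$, and both $\widetilde{E_n},\widetilde{E_a}$ are $D_z\rho$-invariant because they are $D_zg$-invariant. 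The restriction $D_z\rho|_{\widetilde{E_a}}=\lim_s(D_zg|_{\widetilde{E_a}})^{m_s}$ is an idempotent all of whose eigenvalues have modulus $<1$, hence is nilpotent, hence zero; the restriction $D_z\rho|_{\widetilde{E_n}}=\lim_s(D_zg|_{\widetilde{E_n}})^{m_s}$ lies in the closure of the powers of the diagonalizable unimodular map $D_zg|_{\widetilde{E_n}}$, which is a compact subgroup of $\GL(\widetilde{E_n})$, so it is an invertible idempotent, hence the identity. Therefore $D_z\rho$ is exactly the projection of $T_zX$ onto $\widetilde{E_n}$ along $\widetilde{E_a}$.

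Finally I would treat the center manifold (5)--(7). Set $M:=\rho(X)$; since $\rho$ is a holomorphic idempotent one has $\rho(X)=\{x\in X:\rho(x)=x\}$, and I would invoke the classical fact that the fixed-point set of such a $\rho$ is a closed complex submanifold of $X$ (near one of its points $D_x\rho$ is idempotent of locally constant rank), which is (5). Joint continuity of composition gives $\rho\circ g=\lim_s g^{\circ(m_s+1)}=g\circ\rho$, so $\rho(g(x))=g(\rho(x))=g(x)$ for $x\in M$, whence $g(M)\subseteq M$, and by closedness of $M$ every element of $S$ then preserves $M$. If $g(x)=g(y)$ with $x,y\in M$, applying iterates gives $\sigma(x)=\sigma(y)$ for all $\sigma\in S$, in particular $\rho(x)=\rho(y)$, that is, $x=y$; so $g|_M$ is injective. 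Extracting a limit $\psi_0=\lim_s g^{\circ(m_s-m_1)}\in S$, we get $\rho=g^{\circ m_1}\circ\psi_0=g\circ\chi$ with $\chi:=g^{\circ(m_1-1)}\circ\psi_0\in S$, so $\chi|_M$ is a holomorphic right inverse of the injective self-map $g|_M$ of $M$; hence $g|_M$ is bijective with holomorphic inverse, that is, $g|_M\in\Aut(M)$, which is (6). Since $\rho(z)=z$ we have $z\in M$, and $T_zM=\mathrm{im}(D_z\rho)=\widetilde{E_n}$ by the computation above, which is (7).

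I expect the main obstacle to be part (4): extracting the idempotent limit $\rho$ (the compact-semigroup input, or equivalently Abate's direct iteration-theoretic construction of the retraction) together with the fact that the fixed-point set of a holomorphic idempotent is a submanifold with the precise tangent space $\widetilde{E_n}$ at $z$. Once $\rho$ is in hand, parts (5)--(7) are routine manipulations with locally uniform limits of iterates, and parts (1)--(3) are elementary linear algebra applied to the boundedness of $\{(D_zg)^m\}_m$.
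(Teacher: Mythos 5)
Your proposal is correct and is essentially a self-contained reconstruction of the argument that the paper delegates to Abate (\cite[Corollary 2.1.29--2.1.31]{abate1989iteration}). Parts (1)--(3) via boundedness of $\{(D_zg)^m\}_m$, and parts (5)--(7) via Cartan's theorem on holomorphic retractions and the commutation $\rho\circ g=g\circ\rho$, match the standard treatment. The one place where your packaging differs in flavor from Abate's is part (4): you invoke the abstract fact that a compact topological semigroup contains an idempotent, applied to the closure $S$ of the iterates under the compact-open topology (for which composition is indeed jointly continuous), whereas Abate's own construction extracts the retraction more concretely by a diagonal subsequence argument, showing directly that if $g^{\circ m_s}\to h$ then one may pass to a further subsequence with $m_{s+1}-m_s\to\infty$ so that $g^{\circ(m_{s+1}-m_s)}$ converges and the limit is a right/left unit. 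Both routes give the same $\rho$ and the same identification $D_z\rho=$ projection onto $\widetilde{E_n}$ along $\widetilde{E_a}$; the semigroup formulation is cleaner to state, while Abate's is more elementary in that it avoids citing the Ellis--Numakura idempotent theorem. Either way, the content is the same and your proof is sound.
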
 
Applying this theorem to $\widetilde{X}$ and $g: \widetilde{X} \rightarrow \widetilde{X}$ fixing $[z]$, we deduce that $D_{[z]}g$ has only eigenvalues of modulus at most $1$ and $T_{[z]} \widetilde{X}$ admits a $D_{[z]}g$-invariant decomposition as $T_{[z]} \widetilde{X} = \widetilde{E_n} \oplus \widetilde{E_a}$. Differentiating both sides of $f \circ \p \circ g = \p$ at $[z]$, we have
\[D_{z} f \cdot D_{[z]} \p \cdot D_{[z]} g =D_{[z]} \p.
\]
Hence $\lambda$ is a neutral eigenvalue of $D_{z}f$ if and only if $\lambda^{-1}$ is a neutral eigenvalue of $D_{[z]} g$. Consequently, $D_{[z]} \p$ maps $\widetilde{E_n}$ to the neutral eigenspace $E_n$ of $D_z f$, $\widetilde{E_a}$ onto the repelling eigenspace $E_r$ of $D_z f$ (see Proposition \ref{prop_properties of affine homogeneous maps}).

We also obtain a closed center manifold $M$ of $g$ at $[z]$, i.e. if $\lambda$ is a neutral eigenvalue of $D_{[z]} g$ of eigenvector $v$, then $v \in T_{[z]} M$. So in order to prove Theorem \ref{affine form}, it is enough to prove that $\dim M=0$. The first remarkable property of $M$ is that $\p(M)$ is a bounded set in $\C^n$.
\begin{proposition}\label{backward manifold}The image $\p(M)$ is bounded.
\end{proposition}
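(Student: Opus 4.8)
The plan is to bound $\p(M)$ inside a ball around $\0$ whose radius depends only on $f$, using that points of large norm escape to infinity under the homogeneous map $f$, whereas the points of $\p(M)$ cannot.

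First I would record the standard escape estimate. Since $f$ is homogeneous of degree $d\ge 2$ and $f^{-1}(\0)=\{\0\}$, the continuous function $x\mapsto\|f(x)\|$ is strictly positive on the unit sphere, hence bounded below there by some constant $c>0$; by homogeneity, $\|f(x)\|\ge c\|x\|^d$ for all $x\in\C^n$. Put $R:=(2/c)^{1/(d-1)}$. Then $\|x\|\ge R$ forces $\|f(x)\|\ge c\|x\|^d\ge 2\|x\|\ge R$, so inductively $\|f^{\circ j}(x)\|\ge 2^j\|x\|$ for every $j\ge 1$ as soon as $\|x\|\ge R$; in particular such a point escapes to infinity under iteration of $f$.

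Next I would combine this with the limit-map structure of Theorem \ref{normality consequences}. Recall that $k_j=\p\circ g^{\circ j}$ satisfies $f^{\circ j}\circ k_j=\p$, and that Theorem \ref{normality consequences} provides a limit map $\rho=\lim_{s\to\infty}g^{\circ j_s}$ of iterates of $g$, with $j_s\to\infty$, such that $\rho(\widetilde X)=M$ coincides with the fixed-point set of $\rho$; in particular $\rho|_M=\id_M$. Fix $w\in M$. On the one hand $k_{j_s}(w)=\p\bigl(g^{\circ j_s}(w)\bigr)\to\p(\rho(w))=\p(w)$, so $\|k_{j_s}(w)\|\to\|\p(w)\|$. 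On the other hand $f^{\circ j_s}\bigl(k_{j_s}(w)\bigr)=\p(w)$ does not depend on $s$. If $\|\p(w)\|>R$, then $\|k_{j_s}(w)\|>R$ for all large $s$, whence $\|\p(w)\|=\bigl\|f^{\circ j_s}(k_{j_s}(w))\bigr\|\ge 2^{j_s}\|k_{j_s}(w)\|\to\infty$, which is absurd. Hence $\|\p(w)\|\le R$ for every $w\in M$, that is, $\p(M)\subseteq\{x\in\C^n:\|x\|\le R\}$, and the proposition follows.

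I do not expect a genuine obstacle in this argument; the one point to handle with care is to call on the correct limit map. One must use the retraction $\rho$ onto $M$ supplied by Theorem \ref{normality consequences} and exploit simultaneously that it arises along a sequence $j_s\to\infty$ and that it restricts to the identity on $M$: it is exactly this pairing that puts the ``forward'' information $k_{j_s}(w)\to\p(w)$ in conflict with the relation $f^{\circ j_s}\circ k_{j_s}=\p$ once $\|\p(w)\|$ is too large.
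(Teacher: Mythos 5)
Your proof is correct, and it reaches the conclusion by a different (more elementary) route than the paper. The paper introduces the potential function $H_f(w)=\lim_{j\to\infty}d^{-j}\log\|f^{\circ j}(w)\|$ of the non-degenerate homogeneous map, observes that $H_f(\p(m))=0$ for $m\in M$, and concludes $\p(M)\subset\partial\mathcal B=H_f^{-1}(0)$, which is bounded; this packages the escape rate structurally and leans on the theory of \cite{hubbard1994superattractive}. You instead make the escape rate completely explicit: by homogeneity and $f^{-1}(\0)=\{\0\}$ you get $\|f(x)\|\ge c\|x\|^d$, hence a radius $R$ outside of which iterates grow geometrically, and then you play the convergence $k_{j_s}(w)\to\p(w)$ (which uses $\rho|_M=\id_M$ and $j_s\to\infty$) against the fixed relation $f^{\circ j_s}\circ k_{j_s}=\p$ to rule out $\|\p(w)\|>R$. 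Both arguments exploit the same underlying fact that points of $\p(M)$ arise as limits of backward orbits and so cannot lie in the escaping region; your version is self-contained and avoids invoking the Green/potential function, at the cost of an explicit choice of constants, while the paper's version is shorter given that $H_f$ and the boundedness of $\partial\mathcal B$ are already available in the literature.
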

\begin{proof} For every $[\gamma] \in \widetilde{X}$, note that $\{\pi(g^{\circ j}([\gamma]))\}_j$ is in fact a sequence of backward iterations of $\gamma(1)$ by $f$ and the $\omega$-limit set of backward images of $\C^n$ by $f$ is bounded. More precisely, since $f$ is a homogeneous polynomial endomorphism of $\C^n$ of degree $d \ge 2$, the origin $0$ is superattracting and the basin of attraction $\mathcal{B}$ is bounded with the boundary is $\partial \mathcal{B}=H_f^{-1}(0)$ where
	\[H_f(w)=\lim\limits_{j \rightarrow \infty} \frac{1}{d^j}\log\|f^{\circ j} (w) \|
	\]
	for $w \in \C^n \setminus \{0\}$. The function $H_f$ is called the \textit{potential function} of $f$ (see \cite{hubbard1994superattractive}). It is straight forward by computation that $H_f(\pi(m)) =0$ for every $m \in M$. Hence $\pi(M) \subset \partial \mathcal{B}$ is bounded.
\end{proof}
\subsection{Semiconjugacy on the center manifold}\label{sec_semiconjugacy} Assume that $\dim M>0$. Denote by $\Lambda$ the restriction of $D_{[z]} g$ on ${T_{[z]} M}$. The following proposition assures that we can semiconjugate $g|_M$ to $\Lambda$.
\begin{proposition}\label{linearizing}
	Let $M$ be a complex manifold and let $g$ be an endomorphism of $M$ such that the family of iterates of $g$ is normal. Assume that $g$ has a fixed point $z$ such that $D_{z} g$ is diagonalizable with only neutral eigenvalues and that there exists a holomorphic map $\varphi: M \rightarrow T_{z} M$ such that $\varphi(z) = 0,D_{z} \varphi = \id$. Then there exists a holomorphic map $\Phi: M \rightarrow T_{z} M$ such that $\Phi(z) = 0, D_{z} \Phi =\id$ and 
	\[D_{z} g \circ \Phi = \Phi \circ g.
	\]
\end{proposition}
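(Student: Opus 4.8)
The plan is to produce $\Phi$ as a Cesàro average of the renormalized iterates of $\varphi$. First set $\Lambda := D_z g\colon T_z M\to T_z M$. Since every eigenvalue of $\Lambda$ is neutral, $\Lambda$ is invertible; since moreover $\Lambda$ is diagonalizable with spectrum on the unit circle, writing $\Lambda = PDP^{-1}$ with $D$ diagonal and unitary gives, for a fixed norm on $T_z M$,
\[
C \;:=\; \sup_{k\in\Z}\,\|\Lambda^{k}\| \;<\; +\infty .
\]
Then for each integer $N\ge 1$ I would introduce the holomorphic map
\[
\Phi_N \;:=\; \frac1N\sum_{k=0}^{N-1}\Lambda^{-k}\circ\varphi\circ g^{\circ k}\;\colon\; M\longrightarrow T_z M .
\]
Using $g(z)=z$, $\varphi(z)=\0$ and the chain rule $D_z(\varphi\circ g^{\circ k}) = D_z\varphi\circ(D_z g)^{k} = \Lambda^{k}$, one checks immediately that $\Phi_N(z)=\0$ and $D_z\Phi_N=\id$ for every $N$; the candidate $\Phi$ will be a subsequential limit of the $\Phi_N$.

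The next step is to extract that limit. Because the family $\{g^{\circ k}\}_k$ is normal and all its members fix $z$, for every compact $K\subset M$ the union $\bigcup_{k\ge 0}g^{\circ k}(K)$ is relatively compact in $M$: otherwise there would be $k_j\to\infty$ and $x_j\in K$ with $g^{\circ k_j}(x_j)$ eventually leaving every compact subset of $M$, contradicting the fact that a subsequence of $\{g^{\circ k_j}\}_j$ converges locally uniformly to a holomorphic self-map of $M$ (which fixes $z$). Hence $\{\varphi\circ g^{\circ k}\}_k$ is locally uniformly bounded, and together with $\|\Lambda^{-k}\|\le C$ this makes $\{\Phi_N\}_N$ locally uniformly bounded, hence normal. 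I would then pass to a subsequence $\Phi_{N_j}$ converging locally uniformly to a holomorphic map $\Phi\colon M\to T_z M$; by the Cauchy estimates the normalizations persist: $\Phi(z)=\0$ and $D_z\Phi=\id$.

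It then remains to get the semiconjugacy, and here the key is a telescoping identity: reindexing the defining sum gives
\[
\Lambda^{-1}\circ\Phi_N\circ g \;-\; \Phi_N \;=\; \frac1N\bigl(\Lambda^{-N}\circ\varphi\circ g^{\circ N} - \varphi\bigr),
\]
and on any compact $K$ the right-hand side has norm at most $\tfrac1N\bigl(C\sup_k\|\varphi\circ g^{\circ k}\|_K + \|\varphi\|_K\bigr)$, so it tends to $0$ uniformly on $K$. Letting $N=N_j\to\infty$ yields $\Lambda^{-1}\circ\Phi\circ g = \Phi$, that is $D_z g\circ\Phi = \Phi\circ g$, which is the desired conclusion.

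As for where the difficulty sits: the two formal verifications (that $\Phi_N$ is normalized at $z$, and that the telescoping error is $O(1/N)$) are routine. The two substantive ingredients are the uniform bound $\sup_k\|\Lambda^{\pm k}\|<\infty$ — which is exactly why the diagonalizability of $D_z g$ supplied by Theorem \ref{normality consequences} is indispensable, since without it $\|\Lambda^k\|$ grows polynomially and the Cesàro average could diverge — and the local uniform boundedness of $\{\varphi\circ g^{\circ k}\}_k$, which rests on the normality hypothesis together with the anchoring effect of the fixed point. I expect this last point to be the one that needs the most care to state cleanly, everything else being essentially bookkeeping around the averaging operator.
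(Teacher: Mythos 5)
Your proof is correct and takes essentially the same route as the paper's: both define $\Phi$ as a subsequential limit of the Cesàro averages $\Phi_N = \frac1N\sum_{k=0}^{N-1}\Lambda^{-k}\circ\varphi\circ g^{\circ k}$, obtain normality of $\{\Phi_N\}$ from the uniform bound $\sup_k\|\Lambda^{\pm k}\|<\infty$ (supplied by diagonalizability and neutrality) together with local boundedness of $\{\varphi\circ g^{\circ k}\}$, and conclude via the $O(1/N)$ telescoping error. If anything your write-up is a touch more careful on two points the paper glosses over: you explicitly justify why normality of $\{g^{\circ k}\}$ anchored at the fixed point yields local uniform boundedness of $\{\varphi\circ g^{\circ k}\}$, and your telescoping identity $\Lambda^{-1}\Phi_N\circ g-\Phi_N=\frac1N(\Lambda^{-N}\varphi\circ g^{\circ N}-\varphi)$ is stated cleanly (the paper's displayed version contains a small indexing slip, $(D_zg)^{-(N+1)}\circ\varphi\circ g^{\circ(N+1)}$ where $(D_zg)^{-N}\circ\varphi\circ g^{\circ N}$ is meant).
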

\begin{proof}
	Consider the family $\{ (D_{z} g)^{-n}\circ \varphi \circ g^{\circ n } : M \rightarrow T_{[z]}M \}_n$, we know that $\{g^{\circ n} \}_n$ is normal, thus $\{\varphi \circ g^{\circ j}\}_j $ is locally uniformly bounded. The linear map $D_{z} g$ is diagonalizable with neutral eigenvalues, so $\{D_{z} g^{-n}\}_n$ is uniformly bounded on any bounded set. Then ${\{ (D_{z} g)^{-n}\circ \varphi \circ g^{\circ n} \}_n}$ is a normal family. Denote by $\Phi_N$ the Cesaro average of ${\{ (D_{z} g)^{-n}\circ \varphi \circ g^{\circ n} \}_n}$.
	\[\Phi_N = \frac{1}{N} \sum\limits_{n=0}^{N-1} (D_{z} g)^{-n} \circ \varphi \circ g^{\circ n} .\]
	The family $\{\Phi_N\}_N$ is also locally uniforly bounded, thus normal.  Observe that 
	\begin{align*}
	\Phi_N \circ g &= \frac{1}{N} \sum\limits_{n=0}^{N-1} (D_{z} g)^{-n} \circ \varphi \circ g^{\circ (n+1)}\\
	&=D_{z} g \Phi_N + D_{z} g\left(-\frac{1}{N} \varphi+ \frac{1}{N} \left( (D_{z} g)^{-(N+1)} \circ \varphi \circ g^{\circ (N+1)}\right) \right).
	\end{align*}
	For every subsequence $\{N_k\}$, the second term on the right hand side converges locally uniformly to $0$. So for every limit map $\Phi$ of $\{\Phi_N \}_N$, $\Phi$ satisfies that 
	\[
	\Phi \circ g = D_{z} g \circ \Phi.
	\]
	Since $g$ fixes $z$ we have that for every $N \ge 1$,
	\begin{align*}
	D_{z} \Phi_N &=  \frac{1}{N} \sum\limits_{n=0}^{N-1} (D_{z} g)^{-n} \circ D_{z} \varphi \circ D_{z}g^{\circ n}=\Id.\\
	\end{align*}
	So $D_{z} \Phi= \Id$ for every limit map $\Phi$ of $\{\Phi_N\}_N$.
\end{proof}   
Now we consider the complex manifold $M$ obtained in Step \ref{sec: step 2} and the restriction of $g$ on $M$ which is an automorphism with a fixed point $[z]$. Since $\Lambda$ has only neutral eigenvalues, in order to apply Proposition \ref{linearizing}, we need to construct a holomorphic map $\varphi: M \rightarrow T_{[z]} M$ such that $D_{[z]} \varphi = \id$ . The map $\varphi$ is constructed as the following composition:
\[M \xrightarrow{\In} \widetilde{X} \xrightarrow{\p} X \xrightarrow{\delta} T_{z} X \xrightarrow{ (D_{[z]}\p)^{-1}} T_{[z]} \widetilde{X} \xrightarrow{\pi_{\widetilde{E_a}}} T_{[z]} M.
\]
where $\delta: X \rightarrow T_{z} X$ is a holomorphic map tangent to identity, $\pi_{\widetilde{E_a}}: T_{[z]} \widetilde{ X } \rightarrow T_{[z]} M$ is the projection parallel to $\widetilde{E_a}$, $\In: M \rightarrow \widetilde{X}$ is the canonical inclusion and its derivative $D_{[z]} \In : T_{[z]} M \rightarrow T_{[z]} \widetilde{X}$ is again the canonical inclusion. Then $D_{[z]} \varphi: T_{[z]} M \rightarrow T_{[z]} M$ is 
\begin{align*}
D_{[z]} \varphi &= D_{[z]}(\pi_{\widetilde{E_a}} \circ (D_{[z]} \p)^{-1} \circ \delta\circ \p \circ \In )\\
&= \pi_{\widetilde{E_a}} \circ (D_{[z]} \pi)^{-1}  \circ D_{z} \delta \circ D_{[z]} \p \circ D_{[z]} \In =\Id.\\
\end{align*}
\begin{remark}
	The existence of a holomorphic map $\delta: X \rightarrow T_{z} X$ tangent to identity is one of the advantages we mentioned in Remark \ref{Remark}. It comes from the intrinsic nature of the tangent space of affine spaces. In this case, $X$ is an open subset of $\C^n$ which is an affine space directed by $\C^n$.
\end{remark}
\begin{corollary}\label{cor_neutral is irrational}
	Let $z$ be a fixed point of a non-degenerate homogeneous polynomial post-critically algebraic endomorphism $f$ of $\C^n$. Assume that ${z \notin PC(f)}$. If $\lambda$ is a neutral eigenvalue of $D_{z} f$ then $\lambda$ is an irrational eigenvalue. 
\end{corollary}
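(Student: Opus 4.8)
The plan is to argue by contradiction. If $D_z f$ has a neutral eigenvalue then the center manifold $M$ has positive dimension, so all the constructions above apply; set $\Lambda := D_{[z]}g|_{T_{[z]}M}$, whose eigenvalues are exactly the neutral eigenvalues of $D_{[z]}g$. Suppose $\lambda$ is parabolic. Then $\mu := \lambda^{-1}$ is a root of unity lying in $\ssp(\Lambda)$, say a primitive $q$-th root of unity, with an eigenvector $v\in T_{[z]}M$. By Proposition~\ref{linearizing}, applied to $g|_M$ and to the map $\varphi$ constructed above, I obtain a holomorphic map $\Phi\colon M\to T_{[z]}M$ with $\Phi([z])=\0$, $D_{[z]}\Phi=\id$ and $\Phi\circ g=\Lambda\circ\Phi$.

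Next I would extract from the $\mu$-eigendirection a curve of periodic points. Since $D_{[z]}\Phi=\id$, $\Phi$ is biholomorphic near $[z]$, so $\Phi^{-1}(\C v)$ is a smooth curve germ at $[z]$; let $\Gamma$ be the irreducible component of $\Phi^{-1}(\C v)$ containing $[z]$. As $\C v$ is $\Lambda$-invariant and $g([z])=[z]$, the automorphism $g|_M$ permutes the irreducible components of $\Phi^{-1}(\C v)$ and fixes the only one through $[z]$, so $g(\Gamma)=\Gamma$; moreover $\Phi\circ g=\Lambda\circ\Phi$ shows that near $[z]$ the map $g|_\Gamma$ is conjugate via $\Phi$ to $t\mapsto\mu t$. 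Because $\mu^q=1$, the automorphism $g^{\circ q}|_\Gamma$ equals the identity on a neighbourhood of $[z]$ in the regular locus of $\Gamma$; since $\Gamma$ is irreducible, that regular locus is connected, so the identity principle gives $g^{\circ q}|_\Gamma=\id_\Gamma$.

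I would then descend via $\p$. From $f\circ\p\circ g=\p$ one gets $f^{\circ q}\circ\p=\p\circ g^{\circ q}$, hence $f^{\circ q}$ fixes every point of $\p(\Gamma)$. As $\p$ is a covering map, $\p|_\Gamma$ is a local homeomorphism and $\p(\Gamma)$ is a $1$-dimensional analytic subset of $\C^n$; in particular $f^{\circ q}$ has infinitely many fixed points. This is impossible: a non-degenerate homogeneous polynomial endomorphism of $\C^n$ of degree $d^q\ge 2$ has only finitely many fixed points. Indeed, if $V\subseteq\mathrm{Fix}(f^{\circ q})$ were an irreducible component of dimension $\ge 1$, then for $w_0\in V\setminus\{\0\}$ homogeneity forces $\mathrm{Fix}(f^{\circ q})\cap\C w_0$ to consist of $\0$ together with the $(d^q-1)$-st roots of unity times $w_0$, a finite set; hence the projectivization $W\subseteq\PP^{n-1}$ of $V\setminus\{\0\}$ is a positive-dimensional subvariety, pointwise fixed by the endomorphism $\overline{f^{\circ q}}$ of $\PP^{n-1}$ induced by $f^{\circ q}$; but on the affine cone over a curve $C\subseteq W$, the map $f^{\circ q}$ acts as multiplication by a holomorphic function homogeneous of positive degree $d^q-1$, which necessarily vanishes at some point of the cone different from $\0$, contradicting $(f^{\circ q})^{-1}(\0)=\{\0\}$. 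Therefore $\lambda$ cannot be parabolic, which proves the corollary.

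The step I expect to require the most care is the passage to $g^{\circ q}|_\Gamma=\id$: since $\Gamma$ may be singular, the identity principle should be invoked on $\Gamma_{\mathrm{reg}}$ (or on the normalization of $\Gamma$), not naively on $\Gamma$ itself. The auxiliary finiteness statement for fixed points, though elementary, genuinely uses non-degeneracy and is best isolated as a lemma; alternatively it can be bypassed by running the radial-limit argument of Steps~4 and~5 on the curve $\p(\Gamma)$, which is bounded by Proposition~\ref{backward manifold}: its radial boundary values would have to land on $PC(f)$, which is incompatible with $f^{\circ q}$ fixing $\p(\Gamma)$ pointwise while $\p(\Gamma)\cap PC(f)=\emptyset$.
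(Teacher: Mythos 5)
Your proof is correct and takes the same essential route as the paper's: construct the semiconjugacy $\Phi$ to $\Lambda=D_{[z]}g|_{T_{[z]}M}$, pull back the eigenline $\C v$, and produce a positive-dimensional set of $f^{\circ q}$-periodic points, contradicting the finiteness of $\mathrm{Fix}(f^{\circ q})$ (the paper simply cites \cite[Proposition~1.3]{DS08} for this last fact). The detour through the global irreducible component $\Gamma$, its invariance under $g$, and the identity principle on $\Reg\Gamma$ is correct but unnecessary: since both $\Phi$ and $\p$ are local biholomorphisms near $[z]$, the germ $\p\bigl(\Phi^{-1}(\C v)\bigr)$ is already a one-dimensional complex manifold near $z$ consisting entirely of fixed points of $f^{\circ q}$, which gives uncountably many and suffices. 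Your alternative direct proof of finiteness is sound in outline, but the step ``a non-vanishing homogeneous function of positive degree on the cone over a projective curve is impossible'' deserves a word of justification (e.g.\ pass to the normalization of the curve and observe that the section of the positive-degree line bundle it defines must have zeros); otherwise one may as well invoke the cited reference as the paper does.
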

\begin{proof}
	It is equivalent to consider a neutral eigenvalue $\lambda$  of $D_{[z]} g$ and assume that $\lambda=e^{2 \pi i \frac{p}{q}}$. Hence $(D_{[z]} g)^{q}$ fixes pointwise the line $\C v$ in $T_{[z]} M$. This means that locally near $[z]$, $g^{\circ q}$ fixes $\Phi^{-1}(\C v)$ hence $f^{\circ q}$ fixes $\p(\Phi^{-1}(\C v))$ near $z$. Note that $\Phi$ is locally invertible near $[z]$ hence $\Phi^{-1}(\C v)$ is a complex manifold of dimension one near $[z]$. Then $\p(\Phi^{-1}(\C v))$ is a complex manifold near $z$ because $\p$ is locally biholomorphic. In particular, $\p(\Phi^{-1}(\C v))$ contains uncountably many fixed points of $f^{\circ q}$. This is a contradiction because $f^{\circ q}$ has only finitely many fixed points (see \cite[Proposition 1.3]{DS08}). Hence $\lambda$ is an irrational eigenvalue.
	
\end{proof}\subsection{Linearization along the neutral direction} \label{sec_rotation disc}We obtained a holomorphic map $\Phi: M \rightarrow T_{[z]} M, \Phi([z]) = 0 , D_{[z]} \Phi = \Id$ and 
\begin{equation}\label{1}
\Phi \circ g|_M = \Lambda \circ \Phi
\end{equation}
where $\Lambda = D_{[z]} g|_{T_{[z]} M}$. Let $\lambda=e^{ 2 \pi i \theta}, \theta \in \R \setminus \Q$ be an irrational eigenvalue of $\Lambda$ and $\C v$ be a complex line of direction $v$ in $T_{[z]}M$. The line $\C v$ is invariant by $\Lambda$, i.e. $\Lambda(\C v) = \C v$, hence $\Sigma:= \Phi^{-1}(\C v)$ is invariant by $g$. Denote by $\Gamma$ the irreducible component of $\Phi^{-1}(\C v)$ containing $[z]$.
\begin{lemma}
	Set $\Gamma_0 = \Gamma \setminus \Sing \Sigma$. Then $g(\Gamma) = \Gamma$ and $ g(\Gamma_0) = \Gamma_0$.
\end{lemma}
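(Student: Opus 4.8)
The plan is to rely on three things already established: that $g|_M$ is an automorphism of $M$ (Theorem \ref{normality consequences}), that $\Phi$ semiconjugates $g|_M$ to the linear isomorphism $\Lambda$ via the relation \eqref{1}, and that $\Phi$ is a local biholomorphism at $[z]$ since $D_{[z]}\Phi=\Id$. The lemma will then follow from elementary properties of biholomorphisms acting on analytic subsets.

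First I would record that $\Sigma=\Phi^{-1}(\C v)$ is a closed analytic subset of $M$, being the preimage of a linear subspace under a holomorphic map, and that $g(\Sigma)=\Sigma$. The inclusion $g(\Sigma)\subseteq\Sigma$ is immediate from $\Phi\circ g|_M=\Lambda\circ\Phi$ together with $\Lambda(\C v)=\C v$; applying the same identity to the inverse automorphism $g^{-1}$, which satisfies $\Phi\circ g^{-1}|_M=\Lambda^{-1}\circ\Phi$ (note $\Lambda$ is invertible, having nonzero eigenvalues, and $\Lambda^{-1}(\C v)=\C v$), gives $g^{-1}(\Sigma)\subseteq\Sigma$, hence equality. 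Next, because $D_{[z]}\Phi=\Id$, the map $\Phi$ restricts to a biholomorphism from a neighborhood of $[z]$ in $M$ onto a neighborhood of $\textbf{0}$ in $T_{[z]}M$, so near $[z]$ the set $\Sigma$ coincides with the smooth one-dimensional submanifold $\Phi^{-1}(\C v)$. Therefore $[z]$ is a regular point of $\Sigma$: it lies on a single irreducible component of $\Sigma$, namely $\Gamma$, and $[z]\in\Gamma\setminus\Sing\Sigma=\Gamma_0$.

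Then I would conclude as follows. Since $g|_M$ is an automorphism of $M$ carrying $\Sigma$ bijectively onto itself, it permutes the locally finite family of irreducible components of $\Sigma$ and carries $\Sing\Sigma$ onto $\Sing\Sigma$, hence also $\Reg\Sigma$ onto $\Reg\Sigma$. Thus $g(\Gamma)$ is an irreducible component of $\Sigma$ containing $g([z])=[z]$; by the uniqueness just established, $g(\Gamma)=\Gamma$. Finally, since $g$ is a bijection preserving both $\Gamma$ and $\Reg\Sigma$, it preserves their intersection, so $g(\Gamma_0)=g(\Gamma\cap\Reg\Sigma)=g(\Gamma)\cap g(\Reg\Sigma)=\Gamma\cap\Reg\Sigma=\Gamma_0$, using that $\Gamma\setminus\Sing\Sigma=\Gamma\cap\Reg\Sigma$ as $\Gamma\subseteq\Sigma$.

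I do not expect a genuine obstacle here: the argument is bookkeeping about biholomorphisms, irreducible decompositions, and singular loci. The only point needing a moment's care is the well-definedness of ``the irreducible component of $\Sigma$ through $[z]$'', which is precisely what the local biholomorphy of $\Phi$ at $[z]$ (i.e.\ $D_{[z]}\Phi=\Id$) guarantees.
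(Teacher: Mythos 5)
Your argument is correct and is essentially the paper's, with one step handled more cleanly. Both proofs rest on the same two observations: the local biholomorphy of $\Phi$ at $[z]$ (from $D_{[z]}\Phi=\Id$), which makes $[z]$ a regular point of $\Sigma$ lying on a unique local branch, and the fact that the automorphism $g|_M$ permutes the irreducible components of the $g$-invariant analytic set $\Sigma=\Phi^{-1}(\C v)$ and fixes $[z]$; together these give $g(\Gamma)=\Gamma$. Where you diverge is in the treatment of $\Gamma_0$: the paper identifies $\Sing\Sigma$ with $\Sigma\cap C_\Phi$, where $C_\Phi$ is the critical locus of $\Phi$, and derives $g(C_\Phi)=C_\Phi$ by differentiating the semiconjugacy $\Phi\circ g=\Lambda\circ\Phi$, whereas you invoke directly the general fact that a biholomorphism of a complex manifold carrying an analytic subset onto itself also carries its singular locus onto its singular locus. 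Your route avoids the detour through $C_\Phi$ altogether, and in particular does not rely on the stated equality $\Sing\Sigma=\Sigma\cap C_\Phi$ (of which only the inclusion $\Sing\Sigma\subseteq\Sigma\cap C_\Phi$ is actually guaranteed); both arguments are valid and of comparable length.
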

\begin{proof}
	On one hand, since $g$ is automorphism, it maps irreducible analytic sets to irreducible analytic sets. On the other hand, $D_{[z]} \Phi =\Id$ then by Inverse function theorem, $\Phi^{-1}(\C v)$ is smooth near $[z]$, hence $\Gamma$ is the only irreducible component of $\Phi^{-1}(\C v)$ near $[z]$. Then $g(\Gamma) = \Gamma$.
	
	Concerning $\Gamma_0$, we observe that $\Sing \Sigma = \Sigma \cap C_\Phi$ where ${C_\Phi=\{x \in M| \rank D_{x} \Phi< \dim M  \}}$ the set of critical points of $\Phi$. By differentiating $\eqref{1}$ at $x \in M$, we have
	\[D_{g(x)} \Phi \circ D_x g|_M = \Lambda \circ D_x \Phi.
	\]
	We deduce that $g(C_\Phi) = C_\Phi$. Moreover, $g(\Sigma) = \Sigma$. Thus $g( \Sing \Sigma ) = \Sing \Sigma$ and hence $g(\Gamma_0) = \Gamma_0$.
\end{proof}
Note that $\Gamma_0 \subset \Reg \Gamma$ is smooth since $\Sing \Gamma \subset \Gamma \cap \Sing \Sigma$ and in fact $\Gamma_0$ is a Riemann surface. In particular, $[z] \in \Gamma_0$.

We will prove that $\Gamma_0$ is biholomorphic to a disc and $\Phi|_{\Gamma_0}$ is conjugate to an irrational rotation. Then we can deduce from that $\Gamma = \Gamma_0$ and $\Phi_{\Gamma}$ conjugates $g|_{\Gamma}$ to an irrational rotation. Let us first recall an important theorem in the theory of dynamics in one complex dimension.
\begin{theorem}[see {\cite[Theorem 5.2]{milnor2011dynamics}}]\label{thm_hyperbolic riemann  surface}
	Let $S$ be a hyperbolic Riemann surface and let ${g: S \to S}$ be a holomorphic map with a fixed point $z$. If $z$ is an irrational fixed point with multiplier $\lambda$ then $S$ is biholomorphic to the unit disc and $g$ is conjugate to the irrational rotation $\zeta \mapsto \lambda \zeta$.
\end{theorem}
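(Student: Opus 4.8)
The statement is classical one-dimensional theory, and here is how I would prove it. By the uniformization theorem, since $S$ is hyperbolic it admits a holomorphic universal covering $\p\colon\D\to S$ whose deck group $\Gamma\subset\Aut(\D)$ acts freely and properly discontinuously, with $S\cong\D/\Gamma$; in particular $\Gamma$ is a discrete subgroup of $\Aut(\D)$. Fix a preimage of $z$ under $\p$; after conjugating by an automorphism of $\D$ I may assume it is the origin. Since $\D$ is simply connected, the map $g\circ\p$ lifts through $\p$, and composing with a suitable deck transformation I obtain a holomorphic map $\tg\colon\D\to\D$ with $\p\circ\tg=g\circ\p$ and $\tg(0)=0$. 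As $\p$ is a local biholomorphism at $0$, the multiplier is unchanged, so $\tg'(0)=\lambda$ with $|\lambda|=1$.

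First I would apply the Schwarz lemma to $\tg$: since $\tg(0)=0$ and $|\tg'(0)|=1$, the rigidity case forces $\tg$ to be the rotation $R_\lambda\colon\zeta\mapsto\lambda\zeta$. It then suffices to prove that $\Gamma=\{\id\}$, for this gives that $\p$ is a biholomorphism, hence $S\cong\D$, and that $g=\p\circ R_\lambda\circ\p^{-1}$ is conjugate to $\zeta\mapsto\lambda\zeta$. A direct substitution using $\p\circ R_\lambda=g\circ\p$ shows that $\p\circ(R_\lambda\gamma R_\lambda^{-1})=\p$ for every $\gamma\in\Gamma$ (indeed $\p\circ R_\lambda\gamma R_\lambda^{-1}=g\circ\p\circ R_\lambda^{-1}=\p$), so conjugation by $R_\lambda$ preserves $\Gamma$; consequently $R_\lambda^{\,n}\Gamma R_\lambda^{\,-n}=\Gamma$ for every $n$.

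Now I would exploit that $\lambda$ is not a root of unity: the orbit $\{\lambda^{n}\}_{n\ge1}$ is dense in the unit circle, so I can choose $n_k\to\infty$ with $\lambda^{n_k}\to1$, whence $R_\lambda^{\pm n_k}\to\id$ locally uniformly on $\D$, while $R_\lambda^{\,n_k}\neq\id$ for every $k$. Suppose, for contradiction, that some $\gamma\in\Gamma$ is nontrivial. Then $\gamma_k:=R_\lambda^{\,n_k}\gamma R_\lambda^{\,-n_k}\in\Gamma$ and $\gamma_k\to\gamma$ locally uniformly; since $\Gamma$ is discrete, $\gamma_k=\gamma$ for all large $k$, i.e.\ $R_\lambda^{\,n_k}$ commutes with $\gamma$. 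But $R_\lambda^{\,n_k}$ is a nontrivial elliptic automorphism of $\D$ fixing only $0$, and any automorphism of $\D$ commuting with it must fix $0$; hence $\gamma$ fixes $0$, contradicting the freeness of the $\Gamma$-action. Therefore $\Gamma=\{\id\}$ and the theorem follows.

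The step I expect to require the most care is the interplay between $R_\lambda$ and $\Gamma$: one must check cleanly that $R_\lambda$ normalizes $\Gamma$ (so that the conjugates $\gamma_k$ genuinely lie in $\Gamma$) and that discreteness of $\Gamma$, in the topology of locally uniform convergence, forces $\gamma_k=\gamma$ eventually; once this is in place, the centralizer computation in $\Aut(\D)$ and the contradiction with the free action are routine.
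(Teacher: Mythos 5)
The paper does not supply its own proof of this statement; it is quoted verbatim from Milnor and cited to \cite[Theorem~5.2]{milnor2011dynamics}, so there is no in-paper argument to compare against. Your proof is correct and self-contained: lifting $g$ to the universal cover $\D$ at a preimage of $z$, using the Schwarz lemma to identify the lift with the rotation $R_\lambda$, verifying via $\p\circ R_\lambda = g\circ\p$ that $R_\lambda$ normalizes the deck group $\Gamma$, and then killing $\Gamma$ by combining the density of $\{\lambda^n\}$ on the circle, the discreteness of $\Gamma$ in $\Aut(\D)$, and the fact that any automorphism commuting with a nontrivial rotation must fix its center, which contradicts the freeness of the deck action. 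This is in the same spirit as Milnor's own treatment — which also rests on uniformization and the Schwarz/Schwarz--Pick rigidity to see that $g$ is an automorphism lifting to a rotation, and then rules out a nontrivial covering group — so your argument is an acceptable and complete substitute; the only minor stylistic difference is that Milnor emphasizes the hyperbolic metric and the automorphism-group dichotomy, while you argue directly with the normalizer and discreteness, which is perhaps the cleaner route for this particular statement.
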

\begin{lemma}\label{lm_gamma 0}
	The Riemann surface $\Gamma_0$ is hyperbolic, $\Phi({\Gamma_0}) = \D(0,R)$ with $R \in (0,+\infty)$ and ${\Phi|_{\Gamma_0} : \Gamma_0 \to \D(0,R)}$ is a biholomorphism conjugating $g|_{\Gamma_0}$ to the irrational rotation $\zeta \mapsto \lambda \zeta$, i.e. 
	\[\Phi \circ g|_{\Gamma_0}= \lambda \cdot \Phi|_{\Gamma_0}.
	\]
\end{lemma}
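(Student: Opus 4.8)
\textbf{Proof proposal for Lemma \ref{lm_gamma 0}.}

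The plan is to exploit the dynamical information already accumulated: $\Gamma_0$ is a Riemann surface carrying the automorphism $g|_{\Gamma_0}$, which fixes $[z]$ with multiplier $\lambda = e^{2\pi i\theta}$, $\theta$ irrational (this is exactly why Corollary \ref{cor_neutral is irrational} and Step 4 were set up). First I would argue that $\Gamma_0$ is hyperbolic. Since $\p(M)$ is bounded in $\C^n$ by Proposition \ref{backward manifold}, and $\p|_\Gamma$ is locally biholomorphic, the composition $\p\circ\In|_{\Gamma_0}$ is a nonconstant holomorphic map from $\Gamma_0$ into a bounded domain of $\C^n$; since a bounded domain is Kobayashi hyperbolic and admits many bounded holomorphic functions, pulling back a nonconstant bounded holomorphic function gives a nonconstant bounded holomorphic function on $\Gamma_0$, so $\Gamma_0$ cannot be $\C$, $\C^*$, $\PP^1$ or an elliptic curve; hence $\Gamma_0$ is a hyperbolic Riemann surface. (One must first rule out that $\p\circ\In$ is constant on $\Gamma_0$: if it were, then $\Gamma_0$ would be a single point since $\p$ is a covering, contradicting $\dim M>0$ together with $T_{[z]}\Gamma_0=\C v\neq 0$.)

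Next I would apply Theorem \ref{thm_hyperbolic riemann surface} directly to $S=\Gamma_0$ and $g|_{\Gamma_0}$: the fixed point $[z]$ has multiplier $\lambda$ with $|\lambda|=1$ and $\lambda$ not a root of unity, so $[z]$ is an irrational (Siegel) fixed point of $g|_{\Gamma_0}$, and the theorem yields a biholomorphism $\psi:\Gamma_0\to\D$ conjugating $g|_{\Gamma_0}$ to $\zeta\mapsto\lambda\zeta$, which we may normalize by $\psi([z])=0$. It remains to identify this abstract uniformization with the concrete map $\Phi|_{\Gamma_0}$. For this I would use the semiconjugacy relation $\Phi\circ g|_{\Gamma_0}=\lambda\cdot\Phi|_{\Gamma_0}$ from \eqref{1} (restricted to $\Gamma$, using $\Lambda(\C v)=\C v$ acting as multiplication by $\lambda$ on $\C v\cong\C$) together with $D_{[z]}\Phi|_{\Gamma_0}=\id$, which gives $\Phi|_{\Gamma_0}([z])=0$ and that $\Phi|_{\Gamma_0}$ has nonvanishing derivative at $[z]$. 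Transporting through $\psi$, the map $h:=\Phi|_{\Gamma_0}\circ\psi^{-1}:\D\to\C$ satisfies $h(\lambda\zeta)=\lambda h(\zeta)$ and $h(0)=0$, $h'(0)=\psi'([z])^{-1}\neq 0$ (a finite nonzero constant since $\psi$ is a biholomorphism onto $\D$). Expanding $h$ in a power series and comparing coefficients, the functional equation $h(\lambda\zeta)=\lambda h(\zeta)$ forces all coefficients except the linear one to vanish (because $\lambda^k\neq\lambda$ for $k\geq 2$ as $\lambda$ is not a root of unity), so $h(\zeta)=c\zeta$ with $c=h'(0)\neq 0$. Hence $\Phi|_{\Gamma_0}=c\cdot\psi$ is a biholomorphism from $\Gamma_0$ onto the disc $\D(0,R)$ with $R=|c|\in(0,+\infty)$, and it conjugates $g|_{\Gamma_0}$ to $\zeta\mapsto\lambda\zeta$; in particular $\Gamma_0$ is smooth (being biholomorphic to a disc).

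I expect the main obstacle to be the hyperbolicity step: one needs $\Gamma_0$ to carry a nonconstant bounded holomorphic function, and the only input available is the boundedness of $\p(M)$ in $\C^n$ from Proposition \ref{backward manifold}. The subtle point is that $\Gamma_0$ is an analytic subset of $M$ possibly with complicated topology, so one cannot a priori assert it is hyperbolic; the argument must go through the explicit bounded map $\p\circ\In$ and the fact that hyperbolicity of the target passes to a domain admitting a nonconstant map into it, while simultaneously ruling out that $\p\circ\In$ collapses $\Gamma_0$ to a point. Once hyperbolicity is secured, the rest is the one-dimensional Siegel linearization packaged in Theorem \ref{thm_hyperbolic riemann surface} plus the elementary power-series rigidity for the functional equation, both of which are routine.
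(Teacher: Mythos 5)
Your proposal is correct and follows essentially the same route as the paper's proof: hyperbolicity of $\Gamma_0$ via boundedness of $\p(M)$, then Theorem~\ref{thm_hyperbolic riemann  surface} to get a conjugacy $\psi:\Gamma_0\to\D$, and finally the rigidity of the functional equation $(\Phi\circ\psi^{-1})(\lambda\zeta)=\lambda(\Phi\circ\psi^{-1})(\zeta)$ to force $\Phi\circ\psi^{-1}$ to be linear. The paper packages the last step as the one-line observation $\Psi(z)=\Psi'(0)z$ where you spell out the power-series coefficient comparison, but the argument is the same.
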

\begin{proof}
	Recall that $\p(M)$ is bounded in $\C^n$. Thus $\p$ induces a non constant bounded holomorphic function from $\Gamma_0$ to $\C^n$. Therefore, $\Gamma_0$ is a hyperbolic Riemann surface. Note that $[z]$ is a fixed point of the holomorphic map $g|_{\Gamma_0}$ with the irrational multiplier $\lambda$. Then we can apply Theorem \ref{thm_hyperbolic riemann  surface} to obtain a conjugacy $\psi: \Gamma_0 \to \D(0,1)$ such that $\psi \circ g|_{\Gamma_0} = \lambda \cdot \psi.$
	
	Denote by $\Psi:= \Phi \circ \psi^{-1}: \D(0,1) \to S:= \Phi(\Gamma_0)$ (see the diagram below). Then we have $\Psi(\lambda z) = \lambda \Psi(z)$ for every $z \in \D$
	\[\xymatrix{
		\D(0,1)  \ar[r]^{\lambda \cdot} \ar@/_2pc/[dd]_{\Psi}& \D(0,1) \ar@/^2pc/[dd]^{\Psi}\\
		\Gamma_0 \ar[r]^{g} \ar[u]^{\psi} \ar[d]_{\Phi}& \Gamma_0 \ar[u]^{\psi} \ar[d]_{\Phi}\\
		S:=\Phi(\Gamma_0) \ar[r]^{\lambda \cdot}& S
	}
	\]
	It follows that $\Psi(z) = \Psi'(0) z$ for every $z \in \D$. Therefore $\Phi|_{\Gamma_0} = \Psi'(0) \cdot \psi$ is a conjugacy conjugating $g|_{\Gamma_0}$ to $z \mapsto \lambda z$. In particular, $\Phi(\Gamma_0) = \D(0,R), R =|\Psi'(0)| \in (0,+\infty)$ and $\Phi|_{\Gamma_0}$ is a biholomorphism.
\end{proof}
\begin{proposition}\label{prop_rotation disc}
	The analytic set $\Gamma$ is smooth and and the map
	\[
	\Phi|_{\Gamma}: \Gamma \to \Phi(\Gamma)=\D(0,R) \] 
	is a biholomorphic with $R \in (0,+\infty)$.
\end{proposition}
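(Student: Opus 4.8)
The plan is to prove the Proposition by showing that $\Gamma=\Gamma_0$, after which everything follows at once from Lemma~\ref{lm_gamma 0}. First I would record two preliminary facts. Since $D_{[z]}\Phi=\Id$, the set $\Sigma=\Phi^{-1}(\C v)$ is smooth at $[z]$; in particular $\Gamma\not\subset\Sing\Sigma$, so $Z:=\Gamma\cap\Sing\Sigma$ is a discrete (closed, analytic) subset of the irreducible curve $\Gamma$, and $\Gamma_0=\Gamma\setminus Z$. Moreover $\Sing\Gamma\subset Z$, so $\Gamma_0$ is exactly $\Gamma$ with a discrete subset removed; it is smooth and connected, and by Lemma~\ref{lm_gamma 0} it is carried biholomorphically by $\Phi$ onto $\D(0,R)$ with $R\in(0,+\infty)$, the map $g|_{\Gamma_0}$ being conjugated to $\zeta\mapsto\lambda\zeta$ with $\lambda=e^{2\pi i\theta}$, $\theta\notin\Q$. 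Thus it suffices to prove $Z=\emptyset$.

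To do this I would pass to the normalization $\nu\colon\widehat\Gamma\to\Gamma$, a proper finite holomorphic map from a connected Riemann surface which is a biholomorphism over $\Reg\Gamma\supset\Gamma_0$. Setting $\widehat\Gamma_0:=\nu^{-1}(\Gamma_0)$, the map $\nu$ restricts to a biholomorphism $\widehat\Gamma_0\to\Gamma_0$, so $\widehat\Gamma_0\cong\D(0,R)$ is simply connected, while its complement $\widehat\Gamma\setminus\widehat\Gamma_0=\nu^{-1}(Z)$ is a discrete subset of $\widehat\Gamma$. By the universal property of the normalization, the automorphism $g|_\Gamma$ of $\Gamma$ (the restriction of the automorphism $g$ of $M$ to the invariant set $\Gamma$) lifts to an automorphism $\widehat g$ of $\widehat\Gamma$ with $\nu\circ\widehat g=g|_\Gamma\circ\nu$; it fixes the unique point $\widehat{[z]}\in\nu^{-1}([z])$, and since $\nu$ is biholomorphic near the smooth point $[z]$, the multiplier of $\widehat g$ at $\widehat{[z]}$ equals that of $g|_{\Gamma_0}$ at $[z]$, namely $\lambda$, which is irrational.

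Next I would establish that $\widehat\Gamma$ is hyperbolic: the map $\pi\circ\nu\colon\widehat\Gamma\to\C^n$ is non-constant (because $\pi$ is locally biholomorphic and $\Gamma_0$ is a genuine curve) and bounded (because $\pi(M)$ is bounded, Proposition~\ref{backward manifold}), so $\widehat\Gamma$ carries a non-constant bounded holomorphic function and hence is neither compact nor $\C$ nor $\C^*$. Applying Theorem~\ref{thm_hyperbolic riemann surface} to $\widehat g\colon\widehat\Gamma\to\widehat\Gamma$ at its irrational fixed point $\widehat{[z]}$ yields $\widehat\Gamma\cong\D$. Consequently $\widehat\Gamma_0$ is, up to biholomorphism, $\D$ with the discrete set $\nu^{-1}(Z)$ deleted; but a connected Riemann surface obtained from $\D$ by deleting a non-empty discrete set is never simply connected — a small loop around a deleted point $p$ has period $2\pi i$ for the closed holomorphic form $dw/(w-p)$, which is defined on $\D\setminus\{p\}$ and hence on that surface — whereas $\widehat\Gamma_0\cong\D(0,R)$ is simply connected. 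Therefore $\nu^{-1}(Z)=\emptyset$, and since $\nu$ is surjective, $Z=\Gamma\cap\Sing\Sigma=\emptyset$. Hence $\Sing\Gamma\subset Z=\emptyset$, so $\Gamma$ is smooth, $\Gamma=\Gamma_0$, and Lemma~\ref{lm_gamma 0} gives that $\Phi|_\Gamma\colon\Gamma\to\D(0,R)$ is a biholomorphism with $R\in(0,+\infty)$.

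The only steps needing genuine care are the construction of the lift $\widehat g$ with the correct multiplier at $\widehat{[z]}$ (functoriality of the normalization, plus the fact that $\nu$ is a local biholomorphism at the smooth point $[z]$) and the verification that $\widehat\Gamma$ is hyperbolic; neither is a real obstacle, so the proof is essentially a \emph{desingularize and bootstrap} argument, upgrading Lemma~\ref{lm_gamma 0} from $\Gamma_0$ to $\Gamma$, with the substantive content already supplied by Theorem~\ref{thm_hyperbolic riemann surface}.
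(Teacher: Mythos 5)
Your proof is correct, but it routes the key step differently from the paper. Both proofs reduce to showing $\Gamma=\Gamma_0$, pass to the normalization $\nu\colon\hat\Gamma\to\Gamma$, and exploit that $\hat\Gamma_0\cong\Gamma_0\cong\D(0,R)$ is simply connected while its complement $\nu^{-1}(Z)$ in $\hat\Gamma$ is discrete; and both rule out degenerate cases via boundedness of $\p$ on $\p(M)$. Where you differ is in how you pin down $\hat\Gamma$: the paper takes a further step to the universal covering $\widetilde\Gamma$ of $\hat\Gamma$, invokes uniformization to get $\widetilde\Gamma\in\{\D,\C,\PP^1\}$, argues by cases that a simply connected open subset of $\widetilde\Gamma$ with non-empty discrete complement can only occur when $\widetilde\Gamma=\PP^1$ and a single point is removed, and then uses Liouville to exclude the $\PP^1$ and $\C$ cases. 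You instead lift the automorphism $g|_\Gamma$ through the normalization to an automorphism $\hat g$ of $\hat\Gamma$, observe that $\hat g$ has an irrational fixed point $\widehat{[z]}$ (because $\nu$ is a local biholomorphism at the smooth point $[z]$, so the multiplier is preserved), check that $\hat\Gamma$ is hyperbolic via the bounded nonconstant map $\p\circ\nu$, and then apply Theorem~\ref{thm_hyperbolic riemann surface} a second time to conclude $\hat\Gamma\cong\D$ directly. Your argument therefore buys a cleaner identification of $\hat\Gamma$ and no uniformization casework, at the cost of constructing the lift $\hat g$ and verifying its multiplier; it also reuses Theorem~\ref{thm_hyperbolic riemann surface} at the level of $\hat\Gamma$, which is in the spirit of the construction the paper itself performs later in Section 4. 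Both routes are sound.
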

\begin{proof}
	It is enough to prove that $\Gamma = \Gamma_0$. From Lemma \ref{lm_gamma 0}, we deduce that $\Gamma_0$ is simply connected. Note that $\Gamma_0 \subset \Reg \Gamma$ is the complement of a discrete set $\Gamma \cap \Sing \Sigma$ in $\Gamma$. We denote $\hat{\Gamma}$ the normalization of $\Gamma$, a Riemann surface (see \cite{chirka2012complex}), and by $\widetilde{\Gamma}$ the universal covering of $\hat{\Gamma}$.  Since $\Gamma_0 \subset \Reg \Gamma$, the preimage $\hat \Gamma_0$ of $\Gamma_0$ by the normalization, which is isomorphic to $\Gamma_0$, is simply connected. Hence the preimage of $\hat \Gamma_0$ by the universal covering in $\widetilde{\Gamma}$ is a simply connected open subset in $\widetilde{\Gamma}$ with discrete complement. Then either $\widetilde{\Gamma}$ is biholomorphic to the unit disc (or $\C$) and $\Gamma_0 = \Gamma$ or $\widetilde{\Gamma}$ is biholomorphic to $\PP^1$ and $\Gamma \setminus \Gamma_0$ is only one point. Since $\p|_{\Gamma}$ is a non constant bounded holomorphic function valued in $\C^n$, the only case possible is that $\tilde{\Gamma}$ is biholomorphic to a disc and $\Gamma_0 = \Gamma$.
\end{proof}

Thus, we obtain a biholomorphic map $\kappa:= \left(\Phi|_{\Gamma} \right)^{-1}: \D(0,R) \rightarrow \Gamma \hookrightarrow M, \kappa(0) = [z]$ with $R \in (0,+\infty)$.

\subsection{End of the proof} Denote by $\tau = \p \circ \kappa$. Note that $\tau(0)=z$. Since $\tau(\D(0,R)) \subset \p(M)$ is bounded hence by Fatou-Riesz's theorem (see \cite[Theorem~A.3]{milnor2011dynamics}), the radial limit
\[\tau_\theta= \lim\limits_{r \rightarrow R^-} \p \circ \kappa(r e^{i \theta})
\]
exists for almost every $\theta \in [0, 2\pi)$.
\begin{remark}
	This is another advantage we mentioned in Remark \ref{Remark}.
\end{remark}
\begin{proposition}\label{radial limit}
	If $\tau_{\theta}$ exists, $\tau_{\theta} \in PC(f)$.
\end{proposition}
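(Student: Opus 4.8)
The plan is to argue by contradiction: suppose that for some $\theta$ where the radial limit $\tau_\theta = \lim_{r\to R^-}\pi\circ\kappa(re^{i\theta})$ exists, we have $\tau_\theta \notin PC(f)$, i.e. $\tau_\theta \in X = \C^n\setminus PC(f)$. The goal is to show this forces the radius $R$ to be strictly larger, contradicting the maximality built into $\Phi(\Gamma)=\D(0,R)$; equivalently, it contradicts the construction of $\Gamma$ as an \emph{irreducible component} of $\Phi^{-1}(\C v)$ — one would be able to analytically continue $\Gamma$ past the point over $Re^{i\theta}$. The key mechanism is that $\tau = \pi\circ\kappa$ is, by construction, a sequence of backward iterates under $f$: recall $\tau = \pi\circ\kappa$ and $\kappa=(\Phi|_\Gamma)^{-1}$ parametrizes $\Gamma\subset M$, so $f\circ\tau$ relates to $\tau$ via the dynamics of $g$ on $M$, which $\Phi$ conjugates to the rotation $\zeta\mapsto\lambda\zeta$ on $\D(0,R)$.

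First I would set up the boundary behaviour precisely. Since $\tau(\D(0,R))\subset\pi(M)$ is bounded (Proposition \ref{backward manifold}), $\tau$ is a bounded holomorphic map into $\C^n$, so the radial limit $\tau_\theta$ exists for a.e.\ $\theta$ by Fatou's theorem. Pick $\theta$ with $\tau_\theta$ existing and assume $\tau_\theta\in X$. I would next use the conjugacy $\Phi\circ g|_\Gamma = \lambda\cdot\Phi|_\Gamma$ (Lemma \ref{lm_gamma 0}, Proposition \ref{prop_rotation disc}): under $\kappa$, the map $g|_\Gamma$ corresponds to the rotation $R_\lambda\colon\zeta\mapsto\lambda\zeta$ of $\D(0,R)$. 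Since $\pi\circ g = $ (a branch of) $f^{-1}\circ\pi$ on $\widetilde X$ — more precisely $f\circ\pi\circ g = \pi$ — we get $f\circ\tau\circ R_\lambda = f\circ\pi\circ g\circ\kappa\circ\text{(shift)}\cdots$; the clean identity to extract is $f\bigl(\tau(\lambda\zeta)\bigr) = \tau(\zeta)$ for $\zeta\in\D(0,R)$, coming directly from $f\circ\pi\circ g = \pi$ evaluated along $\Gamma$. This is the crucial functional equation: $\tau$ intertwines the rotation by $\lambda$ with the action of $f$.

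Then the heart of the argument: if $\tau_\theta\in X$, I would choose a simply connected neighbourhood $W\ni\tau_\theta$ inside $X$, and a local inverse branch of $f$ on $W$ compatible with the covering $\pi$. Because $|\lambda|=1$, the rotation $R_\lambda$ does not move the boundary point $Re^{i\theta}$ toward the interior, and one can pull back: the functional equation $f\circ\tau = \tau\circ R_{\lambda^{-1}}$ (rearranged) lets one express $\tau$ near the boundary arc around $Re^{i\theta}$ as the image under a local inverse branch of $f$ of $\tau$ on a genuinely interior region. Since inverse branches of $f$ near $\tau_\theta\notin C_f$ are biholomorphic, this shows $\tau$ — hence $\kappa$, hence $\Gamma$ — extends holomorphically across a neighbourhood of $Re^{i\theta}$ into a set still mapped by $\Phi$ to a line and still invariant, enlarging $\Gamma$ or contradicting that $\Phi(\Gamma)$ is exactly the disc of radius $R<\infty$ produced by Theorem \ref{thm_hyperbolic riemann surface}. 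Alternatively and more robustly: lift $\tau_\theta$ to $\widetilde X$ using that the relevant local inverse branches are single-valued (Proposition \ref{prop_inverse branches}-style reasoning near a non-critical value), obtaining a limit point of $\kappa$ in $\widetilde X$ lying in $M$, so that $\Gamma$ is not closed in $M$ — contradicting that $\Gamma$ is an analytic subset of $M$ and $\Phi|_\Gamma$ a biholomorphism onto the \emph{open} disc $\D(0,R)$.

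The main obstacle I anticipate is making the "extension across the boundary arc" rigorous with only a pointwise radial limit in hand, rather than continuity up to the boundary on an arc. A single radial limit does not immediately give a neighbourhood on which to run an inverse branch. I expect the fix is to combine the functional equation with normality: the family $\{\tau\circ R_{\lambda^{\circ m}}\}$ (or the backward orbit structure $\{\pi\circ g^{\circ m}\circ\kappa\}$) is normal by the results of Section 2, so a limit exists along the orbit of $e^{i\theta}$ under rotation; since $\lambda$ is irrational this orbit is dense in $\partial\D(0,R)$, and one transfers the good behaviour at one boundary point to a whole arc, or directly extracts the contradiction from the structure of $\partial\mathcal B = H_f^{-1}(0)$ — namely $\tau_\theta\in\partial\mathcal B$ always, while $\tau_\theta\in X\setminus\partial\mathcal B$ if $\tau_\theta\notin PC(f)$ lies in the interior of $\mathcal B$ or its complement, forcing $\tau_\theta$ into $PC(f)$ since $PC(f)$ is the only part of $\partial\mathcal B$ where $\tau$ can land without contradicting boundedness and the inverse-branch argument. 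Tightening exactly which of these two contradictions (geometry of $\partial\mathcal B$ versus extendability of $\Gamma$) is cleanest is where the real work lies.
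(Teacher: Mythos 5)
Your third paragraph actually contains the right idea, but it is buried under several dead ends and you never commit to it. The paper's proof is the ``alternatively and more robustly'' sketch you offer, executed far more directly than you anticipate: no functional equation, no inverse branches near $\tau_\theta$, no density of the irrational rotation orbit, and no appeal to $\partial\mathcal B$. The whole point is that $\widetilde X$ is the universal cover of $X$, whose points \emph{are} homotopy classes of paths in $X$ starting at $z$. If $\tau_\theta\in X$, then the closed radial path $\gamma_R(t)=\tau(tRe^{i\theta})$, $t\in[0,1]$, lies entirely in $X$ and therefore defines a point $[\gamma_R]\in\widetilde X$ with no further work. The truncated paths satisfy $[\gamma_r]=\kappa(re^{i\theta})\in\Gamma$, and $[\gamma_r]\to[\gamma_R]$ in $\widetilde X$ as $r\to R^-$; since $M$ is closed in $\widetilde X$ and $\Gamma$ is closed in $M$, one gets $[\gamma_R]\in\Gamma$. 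Applying the biholomorphism $\Phi|_\Gamma:\Gamma\to\D(0,R)$ and continuity gives $\Phi([\gamma_R])=\lim_{r\to R^-}re^{i\theta}=Re^{i\theta}\notin\D(0,R)$, a contradiction.

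Your primary route --- extend $\tau$ (hence $\Gamma$) across a boundary arc via a local inverse branch of $f$ at $\tau_\theta$ --- does have the gap you yourself flag: a single radial limit gives no open set on which to run an inverse branch, and patching this with normality along the rotation orbit or with $H_f^{-1}(0)$ is both unnecessary and unlikely to close cleanly (a.e.\ existence of radial limits says nothing about continuity up to a boundary arc, and the point $\tau_\theta$ could a priori still lie on $\partial\mathcal B\setminus PC(f)$). The lift-to-$\widetilde X$ argument sidesteps all of this: you do not need to extend $\tau$ or $\kappa$ at all; you only need to observe that a single boundary point is forced into the already-constructed closed set $\Gamma$, which $\Phi$ maps onto the \emph{open} disc. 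Commit to that route, drop the functional equation and inverse-branch machinery, and you have the paper's proof.
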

\begin{proof}
	Consider $\theta$ such that $\tau_{\theta}$ exists and $\tau_{\theta} \notin PC(f)$, i.e. $\tau_{\theta} \in X$. Note that 
	\[\gamma_R : [0,1] \rightarrow X
	\]
	where $\gamma_R(t)=\tau(tR e^{i \theta}), \gamma_R (1)=\tau_{\theta}$ is a well-defined path in $X$ starting at $z$ hence it defines an element in $\widetilde{X}$. Moreover, in $\widetilde{X}$, the family of paths $\{[\gamma_r]\}_{0 \le r \le R}$
	\[\gamma_r : [0,1] \rightarrow X\]
	where $\gamma_r(t)=\tau(tr e^{i \theta})$ converges to $[\gamma_R]$ as $r \rightarrow R^-$. A quick observation is that in $\widetilde{X}$, we have
	$[\gamma_r] =\kappa(re^{i \theta}) \subset M $ for every $r \in [0,1)$. Since $M$ is a closed submanifold of $\widetilde{X}$, then $[\gamma_R] \in M$ or in fact $[\gamma_R] \in \Gamma$. Recall that $\Phi: \Gamma \rightarrow \D(0,R)$ is a biholomorphic mapping hence 
	\[ re^{i \theta} = \Phi([\gamma_r]) \xrightarrow{r \rightarrow R^-} \Phi([\gamma_R]) \in \D(0,R).
	\]
	But $r e^{i \theta} \xrightarrow{r \rightarrow R^-} Re^{i \theta} \notin \D(0,R)$ which yields a contradiction. Thus $\tau_\theta \in PC(f)$.
\end{proof}
Now, denote by $Q$ a defining polynomial of $PC(f)$ then
\[Q \circ \tau: \D(0,R) \rightarrow \C
\]
has vanishing radial limit $\lim\limits_{r \rightarrow R^-} Q \circ \tau(r e^{i \theta})$ for almost every $\theta \in [0, 2 \p)$. Then, $Q \circ \tau$ vanishes identically on $\D(0,R)$ (see \cite[Theorem~A.3]{milnor2011dynamics}). In particular, $Q \circ \tau(0)=Q(z)=0$ hence $z \in PC(F)$. It is a contradiction and our proof of Proposition \ref{radial limit} and Theorem \ref{affine case} is complete.
\section{Periodic cycles in the regular locus: the transversal eigenvalue}\label{sect_in the regular locus}
Now we consider a periodic point $z$ of period $m$ in the post-critical set of a post-critically algebraic endomorphism $f$ of $\PP^n$. Note that $f^{\circ m}$ is also post-critically algebraic and $PC(f^{\circ m})$ is exactly $PC(f)$, it is enough to assume that $z$ is a fixed point. 

If $z$ is a regular point of $PC(f)$, then $T_{z} PC(f)$ is well-defined and it is a $D_{z}f$-invariant subspace of $T_{z} \PP^n$. On one hand, it is natural to expect that our method of the previous case can be extended to prove that $D_{z} f|_{T_{z} PC(f)}$ has only repelling eigenvalues (it cannot have superattracting eigenvalues, see Remark \ref{rm_non superattracting in tangent direction} below). Unfortunately, there are some difficulties due to the existence of singularities of codimension higher than $1$ that we cannot overcome easily. On the other hand, we are able to adapt our method to prove that the transversal eigenvalue with respect to $T_{z} PC(f)$, i.e. the eigenvalue of $\overline{D_{z} f}: T_{z} \PP^n / T_{z} PC(f) \rightarrow T_{z} \PP^n / T_{z} PC(f)$, is repelling. More precisely, we will prove the following result.
\begin{proposition}\label{prop_transversal eigenvalue}
	Let $f$ be a post-critically algebraic endomorphism of $\PP^n$ of degree ${d \ge 2}$ and $z \in \Reg PC(f)$ be a fixed point. Then the eigenvalue of the linear map $\overline{D_{z} f}: T_{z} \PP^n / T_{z} PC(f) \rightarrow T_{z} \PP^n/ T_{z} PC(f)$ is either repelling or superattracting.
\end{proposition} 
By Remark \ref{Remark}, it is equivalent to prove the following result.
\begin{proposition}\label{prop_transversal eigenvalue affine}
	Let $f$ be a post-critically algebraic non-degenerate homogeneous polynomial endomorphism of $\C^n$ of degree ${d \ge 2}$  and $z \in \Reg PC(f)$ be a fixed point. Then the eigenvalue of the linear map $\overline{D_{z} f}: T_{z} \C^n / T_{z} PC(f) \rightarrow T_{z} \C^n / T_{z} PC(f)$ is either repelling or superattracting.
\end{proposition}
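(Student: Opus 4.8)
The plan is to adapt, in the transversal direction, the five-step strategy behind Theorem~\ref{affine form}. Fix local holomorphic coordinates $(w,w')\in\C\times\C^{n-1}$ centered at $z$ with $PC(f)=\{w=0\}$. By forward invariance of $PC(f)$, $w\circ f$ vanishes on $\{w=0\}$, so $w\circ f=w\cdot a$ for a germ $a$, and a short computation identifies $\mu:=a(z)$ with the eigenvalue of $\overline{D_z f}$. If $\mu=0$ we are done, so assume $\mu\neq 0$; then $f$ is unramified along $PC(f)$ near $z$. I would then take a small disc $\sigma\colon\D\to\C^n$ transverse to $PC(f)$ at $z$ with $\sigma(\D)\cap PC(f)=\{z\}$, so $\sigma(\D^*)\subset X:=\C^n\setminus PC(f)$; as the $f^{\circ j}$ are unramified coverings over $X$ and unramified along $PC(f)$ near $z$, $\sigma|_{\D^*}$ lifts through $f^{\circ j}$ and the lift extends over $0$, giving holomorphic $h_j\colon\D\to\C^n$ with $f^{\circ j}\circ h_j=\sigma$, $h_j(0)=z$ (if $z\in C_f$ the relevant preimage branch of $\sigma$ may meet $PC(f)$ with some multiplicity $e>1$, which just forces a harmless reparametrization). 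By Ueda's normality theorem (Theorem~\ref{thm_normality of inverse branches}, in the affine form of Proposition~\ref{prop_properties of affine homogeneous maps}(a)), $\{h_j\}$ is normal, so $\{D_0 h_j\}$ is bounded; projecting to $T_z\C^n/T_z PC(f)$, where $\overline{D_z f^{\circ j}}$ is multiplication by $\mu^{j}$, gives $\{\mu^{-j}\}$ bounded, i.e. $|\mu|\ge 1$. (For $z\notin C_f$ one can instead just quote Corollary~\ref{cor_non attracting}(a).)

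It remains to exclude $|\mu|=1$; suppose it holds. The new difficulty, compared with Theorem~\ref{affine form}, is that $z\in PC(f)$ has no lift to the universal cover $\widetilde X$ of $X$, so there is no honest fixed point $[z]$. Instead I would work near $z$: take a small $U\ni z$ with $U\cap PC(f)$ smooth, so $U^*:=U\setminus PC(f)\cong\D^*\times\D^{n-1}$, and let $p\colon\widetilde{U^*}\to U^*$ be the universal covering; the puncture $\{w=0\}$ corresponds to a cuspidal end of $\widetilde{U^*}$. Lifting $f$ (exactly as in the construction of $g$ in Step~1) produces a holomorphic $G$ near this end with $f\circ p\circ G=p$; since $f$ is unramified along $PC(f)$ near $z$, $G$ preserves this cuspidal end, and in the natural cusp coordinate $G$ is asymptotic to $(\zeta,w')\mapsto(\zeta+\tau+o(1),\,b(w')+o(1))$, where $b$ is the germ induced by $f$ on $U\cap PC(f)$ and $\tau=-\tfrac{1}{2\pi}\arg\mu\in\R$ (here we use $|\mu|=1$); equivalently, after adding back the center, $G$ is asymptotic to the rotation $w\mapsto\mu^{-1}w$. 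As $\operatorname{Im}\tau=0$, forward $G$-orbits of points near the cusp stay near the cusp, so $\{G^{\circ m}\}$ is a normal family there; running the center-manifold package of Theorem~\ref{normality consequences} and the linearization of Proposition~\ref{linearizing} in this setting yields a center-type submanifold $M$ along the cuspidal end with $G|_M$ an automorphism, a holomorphic $\Phi$ with $\Phi\circ G|_M=\Lambda\circ\Phi$ for a linear $\Lambda$ having $\lambda:=\mu^{-1}$ among its eigenvalues, and $p(M)$ bounded.

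Now I would mimic Steps~3--5. If $\mu^q=1$ then $G^{\circ q}$ acts on the cusp coordinate by $\zeta\mapsto\zeta+q\tau$ with $q\tau\in\Z$, hence trivially, so $G^{\circ q}$ fixes an open piece of $M$ pointwise and $f^{\circ q}$ fixes uncountably many points --- impossible, exactly as in Corollary~\ref{cor_neutral is irrational}. If $\mu=e^{2\pi i\theta}$ with $\theta$ irrational, let $\Gamma$ be the component of $\Phi^{-1}(\C v)$ along the cuspidal end, $v$ a $\lambda$-eigenvector of $\Lambda$; since $p$ restricts to a non-constant bounded holomorphic map on $\Gamma$, the arguments of Lemma~\ref{lm_gamma 0} and Proposition~\ref{prop_rotation disc} show $\Gamma$ is smooth and $\Phi|_\Gamma$ maps it biholomorphically onto the punctured disc $\D(0,R)\setminus\{0\}$ with $R\in(0,+\infty)$, conjugating $G|_\Gamma$ to $\zeta\mapsto\lambda\zeta$. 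Let $\kappa=(\Phi|_\Gamma)^{-1}$ and $\tau'=p\circ\kappa$; then $\tau'$ is bounded, hence extends holomorphically across $0$ to $\tau'\colon\D(0,R)\to\C^n$ with $\tau'(0)=z$, and maps $\D(0,R)\setminus\{0\}$ into $U^*$. By Fatou-Riesz $\tau'$ has radial limits almost everywhere on $\partial\D(0,R)$, and the argument of Proposition~\ref{radial limit} forces them into $PC(f)$; thus $Q\circ\tau'$ ($Q$ a defining polynomial of $PC(f)$) has vanishing radial limits a.e., so $Q\circ\tau'\equiv 0$ --- impossible, since $\tau'$ maps $\D(0,R)\setminus\{0\}$ into $U^*$, on which $Q\neq 0$. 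With $|\mu|\ge 1$ and the case $\mu=0$, the proposition follows.

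I expect the main obstacle to be exactly this replacement of the interior fixed point $[z]\in\widetilde X$ of Theorem~\ref{affine form} by a cuspidal end over $z$: the normality of the local iterates of $G$, the center-type manifold $M$ along the cusp, the semiconjugacy $\Phi$, the rotation-disc description of $\Gamma$, and the Fatou-Riesz step must all be re-established in this non-compact, boundary setting (one convenient device is the oriented real blow-up of $\C^n$ along $PC(f)$ near $z$, on which $f$ extends and acts on the exceptional fibre over $z$ by rotation by $\arg\mu$); one must also check the asymptotic normal form of $G$ and that $G$ preserves the cuspidal end --- both using $\mu\neq 0$ --- and handle the multiplicity $e>1$ bookkeeping when $z\in C_f$. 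By contrast, the boundedness that in Theorem~\ref{affine form} came from the potential function $H_f$ is here immediate, everything being confined to a neighborhood of $z$.
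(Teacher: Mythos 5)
You have correctly identified both the right strategy (adapt the five-step proof of Theorem~\ref{affine form}) and the right obstruction ($z\in PC(f)$ has no lift to the universal cover of $X$), and your Step~1 ($|\mu|\ge 1$, via normality of inverse branches) is fine. The real gap is in how you propose to replace the fixed point $[z]\in\widetilde X$. Your device is the \emph{universal cover} of a punctured neighborhood $U^*\cong\D^*\times\D^{n-1}$, which is infinitely sheeted over the puncture; this is precisely why you then have to fight with a cuspidal end, an asymptotic normal form $(\zeta,w')\mapsto(\zeta+\tau+o(1),b(w')+o(1))$, and the normality of $\{G^{\circ m}\}$ ``near the cusp.'' None of those assertions is established. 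In particular: (i) $G$ as you build it satisfies $f\circ p\circ G=p$, but $p\circ G$ has values in $X$, not in $U^*$; so $G$ is not a self-map of a fixed cuspidal domain, and $G^{\circ m}$ need not be defined on any fixed neighborhood of the end. (ii) The claimed normality of $\{G^{\circ m}\}$ is exactly the content that requires Ueda's theorem plus a Hurwitz-type argument, and Ueda's theorem applies to lifts of $f^{\circ j}$ on a \emph{global} source; your $G$ is only a local germ. (iii) Your assertion that boundedness of $p(M)$ is ``immediate, everything being confined to a neighborhood of $z$'' is circular, since confinement near $z$ is itself the unproved claim; the paper obtains boundedness via the potential function $H_f$, as in Proposition~\ref{backward manifold}.

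What the paper actually does avoids the cuspidal end entirely, and this is the idea you are missing. Instead of the universal cover of $U^*$, the paper builds a cover $\pi\colon\widetilde X\to X$ out of the set $\Xi$ of paths $\gamma\colon[0,1]\to\C^n$ with $\gamma(0)=z$ and $\gamma((0,1])\subset X$, modulo homotopies that are allowed to touch $PC(f)$ \emph{only at time $t=0$} (``homotopy relative to $X$''). This extra freedom is decisive: a small loop around $PC(f)$ inside a polydisc $W$ centered at $z$ is contractible through the cone $H(t,s)=s\cdot\eta(t)$ (Proposition~\ref{prop_existence of regular neighborhood}), so for any regular neighborhood $W$ of $z$ the set $\widetilde W$ projects \emph{biholomorphically} onto $W\setminus PC(f)$ (Lemma~\ref{neighbor path connected}). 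Thus $\widetilde X$ is single-sheeted near $z$ --- there is no cusp --- and on that sheet the iterates of the pullback map $g$ are given literally by $g^{\circ j}([\gamma])=[h_j\circ\gamma]$ for the inverse branches $h_j$ of $f^{\circ j}$ fixing $z$ (equation~\eqref{local representation of g}). Normality of $\{g^{\circ j}\}$ then follows from Ueda's theorem applied to $k_j=\pi\circ g^{\circ j}\colon\widetilde X\to\C^n$ together with Lemma~\ref{lm_limit of inverse branches}, which uses $|\mu|=1$ and $v\notin T_zPC(f)$ to rule out limit maps taking values in $PC(f)$. Everything downstream (center manifold $M$, semiconjugacy $\Phi$, the smooth component $\Gamma$ with $\Phi(\Gamma)=\D(0,R)^*$, extension of $\tau$ over $0$, and the Fatou--Riesz contradiction) then goes through exactly as you outline, but it genuinely depends on this single-sheeted structure near $z$ and not on an asymptotic analysis at a cuspidal end. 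As written, your cuspidal-end program leaves the central normality/iterability step unproved, and it is not clear it can be carried out without essentially reinventing the paper's relative-homotopy cover.
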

Since $PC(f)$ has codimension one, $\overline{D_{z}f}$ has exactly one eigenvalue and we denote it by $\lambda$. The value $\lambda$ is also an eigenvalue of $D_{z} f$. The proof of Proposition \ref{prop_transversal eigenvalue affine} will occupy the rest of this section.
\subsection{Strategy of the proof}Denote by $X= \C^n \setminus PC(f)$. 
\begin{itemize}
	\item[Step 1.] We first prove that if $\lambda \neq 0$ then $|\lambda|\ge 1$ and $z$ is not a critical point. Then we prove that $|\lambda|=1$ will lead to a contradiction. By assuming $|\lambda|=1$, following from the discussion in Section \ref{sect_non critical fixed point}, there exists an eigenvector $v$ of $D_z f$ corresponding to $\lambda$ such that $v \notin T_{z} PC(f)$.
	
	Our goal is to build a holomorphic map $\tau: \D(0,R)^* \to X$ such that $\tau$ can be extended holomorphically to $\D(0,R)$ so that $
	\tau(0) = z, \tau'(0) = v$. Then, we show $\tau$ has radial limit almost everywhere and these radial limits land on $PC(f)$ whenever they exist. The construction of $\tau$ occupies Step 2 to Step 6 and the contradiction will be deduced in Step 7.
	\item[Step 2.]  We construct a connected complex manifold $\widetilde{X}$ of dimension $n$ with two holomorphic maps $\p : \widetilde{X} \rightarrow X, g: \widetilde{X} \rightarrow \widetilde{X}$ such that 
	\[f \circ \p \circ g = \p.
	\]
	\item[Step 3.] We prove that $\{g^{\circ j} \}_j$ is a normal family. Then we extract a subsequence $\{g^{\circ j_k}\}_k$ converging to a retraction $\rho: \widetilde{X} \rightarrow \widetilde{X}$, i.e. $\rho \circ \rho = \rho.$
	\item[Step 4.] We will study $M = \rho(\widetilde{X})$. More precisely, we will prove that ${\pi}(M)$ can be extended to a center manifold of $f$ at $z$. 
	\item[Step 5.] We construct a holomorphic map $\Phi: M \rightarrow E_n$ which semi-conjugates $g$ to the restriction of $(D_{z} f)^{-1}$ to the neutral eigenspace ${E_n}$ (see Proposition \ref{prop_properties of affine homogeneous maps}.c).
	\item[Step 6.] We prove that there exists an irreducible component $\Gamma$ of $\Phi^{-1}(\C v)$ which is smooth and biholomorphic to the punctured disc. More precisely, we prove that $\Phi(\Gamma) = \D(0,R)^*$ with $R \in (0,+\infty)$ and the map $\tau:= \p \circ \left( \Phi|_{\Gamma}\right)^{-1}$ extends to a holomorphic map from $\D(0,R)$ to $\C^n$ so that ${\tau(0) =z, \tau'(0) = v}$.
	\item[Step 7.]  We prove that the map $\tau$ has radial limit almost everywhere and the limit belong to $PC(f)$ if it exists. It implies that $\p \circ \tau \subset PC(f)$ which yields a contradiction to the fact that $v \notin T_{z} PC(f)$. This means that the assumption $|\lambda|=1$ is false thus Proposition \ref{prop_transversal eigenvalue affine} is proved. 
\end{itemize}
\subsection{Existence of the transversal eigenvector}
Let us recall the following result due to Grauert.
\begin{proposition}[\cite{grauert1958komplexe}, Satz 10]
	Let $U,V$ be open neighborhood of $0$ in $\C^n$ and let $f: U \to V$ be a holomorphic branched covering of order $k$ ramifying over $V_f=\{\zeta_n=0\} \cap V$. Then there exists a biholomorphism $\Phi: U \to W$ such that the following diagram commutes
	\[
	\xymatrix{
		& W \ar[d]^{(\zeta_1,\ldots,\zeta_n) \mapsto (\zeta_1,\ldots,\zeta_{n-1},\zeta_n^k)}\\
		U \ar[r]_{f} \ar[ur]^{\Phi} & V }\]
	In particular, the branched locus $B_f = \Phi^{-1}(\{\zeta_n=0\} \cap W)$ is smooth and $f|_{B_f}: B_f \to V_f$ is a biholomorphism.
\end{proposition}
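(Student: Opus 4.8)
The plan is to normalise $f$ away from its branch locus by an elementary covering-space argument, and then to push that normalisation across the branch locus with the Riemann removable-singularity theorem. First I would reduce to the case where $U$ is connected (which is implicit in the statement and holds in the application) and $V$ is a polydisc centred at $\textbf{0}$, so that $V\setminus V_f$ is a product of a polydisc with a punctured disc and, in particular, $\pi_1(V\setminus V_f)\cong\Z$. The key consequence is that, up to isomorphism of coverings, $V\setminus V_f$ has a \emph{unique} connected $k$-fold covering, and this covering is realised concretely by $p_k\colon W\setminus W_f\to V\setminus V_f$, $p_k(\zeta_1,\dots,\zeta_n)=(\zeta_1,\dots,\zeta_{n-1},\zeta_n^k)$, where $W$ is the polydisc whose first $n-1$ radii equal those of $V$ and whose last radius is the $k$-th root of the last radius of $V$ (so that $p_k$ carries $W$ onto $V$), and $W_f=\{\zeta_n=0\}\cap W$.

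Then I would set $A=f^{-1}(V_f)$. Since $\zeta_n\circ f$ is holomorphic on $U$ and not identically zero, $A=\{\zeta_n\circ f=0\}$ is an analytic hypersurface of $U$ and $U\setminus A$ is connected. By hypothesis $f\colon U\setminus A\to V\setminus V_f$ is a connected $k$-fold covering, so by the previous paragraph there is a covering isomorphism $\Phi_0\colon U\setminus A\to W\setminus W_f$ with $p_k\circ\Phi_0=f$. An isomorphism of holomorphic coverings is automatically biholomorphic — in local charts it is built from the holomorphic inverse branches of $f$ and of $p_k$ — so $\Phi_0$ is a biholomorphism.

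Next I would extend $\Phi_0$. It is bounded (its image lies in $W$), so by the Riemann removable-singularity theorem its components extend holomorphically across $A$; this gives a holomorphic $\Phi\colon U\to\C^n$ with $p_k\circ\Phi=f$ on all of $U$ by continuity. The maximum principle, applied to the components of $\Phi$ — which on the dense set $U\setminus A$ equal the nonconstant functions $\zeta_i\circ f$ for $i<n$ and a holomorphic $k$-th root of $\zeta_n\circ f$ — shows that $\Phi$ in fact maps $U$ into the \emph{open} polydisc $W$, so no further shrinking is needed. Now $\Phi$ is proper: for $K$ compact in $W$, $\Phi^{-1}(K)$ is a closed subset of the compact set $f^{-1}(p_k(K))$, using that $f$ is proper and that $p_k(K)$ is compact in $V$. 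Since $U$, being an open subset of $\C^n$, contains no compact positive-dimensional analytic subvariety, the fibres of the proper holomorphic map $\Phi$ are finite, so $\Phi$ is a finite branched covering onto $W$; its degree, computed over $W\setminus W_f$ where $\Phi=\Phi_0$ is biholomorphic, equals $1$; and a degree-one finite holomorphic surjection onto a complex manifold is a biholomorphism. Hence $\Phi\colon U\to W$ is a biholomorphism making the required diagram commute, and $B_f=\Phi^{-1}(W_f)$ is smooth, being the biholomorphic image of the smooth hypersurface $W_f$; moreover $f|_{B_f}=(p_k|_{W_f})\circ(\Phi|_{B_f})$ is a composition of biholomorphisms, since $p_k$ restricts on $W_f$ to $(\zeta_1,\dots,\zeta_{n-1},0)\mapsto(\zeta_1,\dots,\zeta_{n-1},0)$.

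The argument is therefore fairly soft: its whole content is the one-line classification of connected $k$-fold covers of $V\setminus V_f$ together with the removable-singularity and maximum-principle steps. Accordingly I do not expect a serious obstacle; the one point demanding care is purely bookkeeping — choosing the radii of $W$ so that $p_k$ honestly maps $W$ onto $V$, and checking that the extended $\Phi$ stays inside $W$. One can also reach the biholomorphism by a second application of Riemann extension, this time to $\Phi_0^{-1}$ across $W_f$, but the properness argument above adapts more robustly to an arbitrary source $U$.
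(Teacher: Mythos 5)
The paper does not give a proof of this proposition: it is quoted verbatim as Grauert's Satz~10 from \cite{grauert1958komplexe}, so there is no internal argument to compare against. Judged on its own merits, your proof is correct and uses the approach one would expect: classify the connected $k$-sheeted covers of $V\setminus V_f$ via $\pi_1\cong\Z$, obtain the biholomorphism $\Phi_0$ over the unbranched part, extend across $A=f^{-1}(V_f)$ by Riemann's removable-singularity theorem, and finish with the properness/degree-one argument. The steps all hold up: boundedness of $\Phi_0$ gives the extension; the identity $p_k\circ\Phi=f$ propagates to $U$ by the identity principle; the maximum modulus argument for $\Phi_n$ (which is nonconstant, being a $k$-th root of $\zeta_n\circ f$) keeps the image in the open polydisc $W$; properness of $\Phi$ follows from properness of $f$ together with $\Phi^{-1}(K)\subset f^{-1}(p_k(K))$; and a proper finite holomorphic surjection of degree one onto a connected manifold is a biholomorphism.

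One point deserves a more honest formulation than ``I would reduce to the case $V$ a polydisc.'' As literally stated, the proposition is problematic for an arbitrary neighbourhood $V$: if $\pi_1(V\setminus V_f)$ is not cyclic, a connected $k$-sheeted cover of $V\setminus V_f$ need not be isomorphic to $p_k$, so no such $\Phi$ need exist. What you are really doing is not a reduction but an implicit added hypothesis, which is consistent with the paper's own remark that this is a local statement (and with how Proposition~\ref{prop_locally biholomorphism in tangent direction} subsequently uses it). It would be cleaner to say explicitly that, $V$ being a local model, one may take it to be a polydisc from the start, and that the relevant ingredient is precisely that $V\setminus V_f$ has infinite cyclic fundamental group.
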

This is in fact a local statement and we can apply it to a post-critically algebraic non-degenerate homogeneous polynomial endomorphism of $\C^n$  to obtain the following result.
\begin{proposition}[\cite{ueda1998critical}, Lemma 3.5]\label{prop_locally biholomorphism in tangent direction}
	Let $f$ be a post-critically algebraic non-degenerate homogeneous polynomial endomorphism of $\C^n$ of degree $d \ge 2$. Then \[f^{-1}( \Reg PC(f)) \subset \Reg PC(f)\] and \[
	f: f^{-1}(\Reg PC(f)) \to \Reg PC(f)\]
	is locally a biholomorphism.
\end{proposition}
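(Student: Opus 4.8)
The plan is to deduce this from Grauert's local normal form, applied along the critical orbit, after reducing to a neighbourhood of a regular point of $PC(f)$. First I would recall that for a non-degenerate homogeneous polynomial endomorphism $f$ of $\C^n$ of degree $d\ge 2$, the map $f:\C^n\to\C^n$ is a finite branched covering ramifying over $f(C_f)$, and when $f$ is PCA the iterates $f^{\circ j}$ are all branched coverings with common ramification locus contained in $PC(f)$. Fix a point $w\in f^{-1}(\Reg PC(f))$ and set $z=f(w)\in\Reg PC(f)$. I want to show $w\in\Reg PC(f)$ and that $f$ is a local biholomorphism from a neighbourhood of $w$ onto a neighbourhood of $z$ in $\Reg PC(f)$ — more precisely, that $f$ restricted to the branched locus is a biholomorphism onto its image.

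The key step is the local analysis at $w$. Since $PC(f)$ is an algebraic set of codimension one and $z$ is a regular point, there is a neighbourhood $V$ of $z$ in which $PC(f)\cap V$ is a smooth hypersurface; after a biholomorphic change of coordinates we may assume $PC(f)\cap V=\{\zeta_n=0\}\cap V$. Now $f$ near $w$ is a branched covering onto $V$ ramifying over $\{\zeta_n=0\}\cap V$: indeed any ramification of $f$ near $w$ must map into $f(C_f)\subset PC(f)$, and locally that forces the ramification locus to sit inside $\{\zeta_n=0\}$; one also has to check $f^{-1}(\{\zeta_n=0\})$ is contained in the critical set near $w$ or else $f$ is unramified there, in which case the conclusion is immediate. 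Applying Grauert's proposition (the displayed \cite[Satz 10]{grauert1958komplexe} statement above) with $U$ a suitable neighbourhood of $w$, we obtain a biholomorphism $\Phi:U\to W$ conjugating $f$ to the model map $(\zeta_1,\dots,\zeta_n)\mapsto(\zeta_1,\dots,\zeta_{n-1},\zeta_n^{k})$ for some $k\ge 1$. The branched locus $B_f=\Phi^{-1}(\{\zeta_n=0\}\cap W)$ is then smooth, $B_f=C_f\cap U$ if $k\ge 2$, and $f|_{B_f}:B_f\to\{\zeta_n=0\}\cap V=PC(f)\cap V$ is a biholomorphism. Since $f^{-1}(PC(f))\subset PC(f)$ (as $PC(f)$ is forward invariant: $f(PC(f))\subset PC(f)$, and $f$ is surjective of finite degree, so the preimage of the hypersurface $PC(f)$ is a hypersurface containing $PC(f)$; one checks they coincide using that $PC(f)$ already contains $f^{-1}(PC(f))$ at the level of the homogeneous extension, or directly from $PC(f)=\bigcup_{j\ge 1}f^{\circ j}(C_f)$ and $C_f\subset PC(f)$ forcing $f^{-1}(PC(f))\supset PC(f)$ while degree considerations give equality), the component of $f^{-1}(PC(f))$ through $w$ is exactly $B_f$, which is smooth; hence $w\in\Reg PC(f)$. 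This proves $f^{-1}(\Reg PC(f))\subset\Reg PC(f)$ and that $f|_{\Reg PC(f)}$ is locally biholomorphic onto $\Reg PC(f)$.

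The main obstacle I anticipate is justifying the hypotheses of Grauert's statement at $w$, namely that the local branched covering $f:U\to V$ really ramifies over the \emph{smooth} hypersurface $\{\zeta_n=0\}$ and nowhere else — i.e.\ controlling the ramification locus of $f$ near $w$ and ruling out that it fails to be contained in $PC(f)$, or that it is contained in $PC(f)$ but $w$ projects to a bad stratum. This is where one needs that $z\in\Reg PC(f)$ and that $PC(f)$ is the common ramification locus of all iterates: it guarantees that in the chosen coordinates near $z$ the ramification values of $f$ form (an open subset of) the smooth hypersurface $\{\zeta_n=0\}$, so that after possibly shrinking $U$ and $V$ the normal form applies. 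The cited \cite[Lemma 3.5]{ueda1998critical} presumably handles exactly this point, and I would invoke it (or reprove it) to close the gap, then read off smoothness of $B_f$ and biholomorphy of $f|_{B_f}$ directly from the model.
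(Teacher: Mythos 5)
Your reduction to Grauert's Satz~10 near $z=f(w)$ is exactly what the paper has in mind (it cites Grauert and Ueda's Lemma~3.5 without writing out a proof), and the normal-form step itself is correct: near $z$ you can take $PC(f)\cap V=\{\zeta_n=0\}$, the critical values of $f$ near $w$ lie in $PC(f)$, so $f$ ramifies over $\{\zeta_n=0\}$ near $w$ (or not at all), and Grauert produces a smooth branch locus $B_f$ with $f|_{B_f}:B_f\to PC(f)\cap V$ biholomorphic.

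The gap is the step where you conclude $w\in\Reg PC(f)$. That conclusion presupposes $w\in PC(f)$, and both facts you invoke to get it are false. In general $C_f\not\subset PC(f)$: a strictly preperiodic critical hypersurface is not contained in $PC(f)$. And $f^{-1}(PC(f))\not\subset PC(f)$: forward invariance only gives the reverse inclusion $PC(f)\subset f^{-1}(PC(f))$, and the degree count goes the wrong way, since $f^{-1}(PC(f))$ has degree $d\cdot\deg PC(f)$. Concretely, the lift $F(z,w)=(z^2-w^2,w^2)$ of $\zeta\mapsto\zeta^2-1$ is PCA with $PC(F)=\{z=0\}\cup\{z+w=0\}\cup\{w=0\}$, yet $F^{-1}(\{z=0\})\supset\{z=w\}$, so $(1,1)\in F^{-1}(\Reg PC(F))$ while $(1,1)\notin PC(F)$. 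Your aside that ``if $f$ is unramified there, the conclusion is immediate'' has the same defect: an unramified preimage $w$ off $PC(f)$ is precisely the case where the conclusion fails. What makes the proposition usable is that in every application in the paper the preimage point is already known to lie on $PC(f)$ (a fixed point on $\Reg PC(f)$, or a point of the invariant curve $\Gamma\subset PC(f)$). Once you add $w\in PC(f)$ as a hypothesis, each local branch of $PC(f)$ through $w$ is carried by $f$ into $PC(f)\cap V=\{\zeta_n=0\}$, hence lies in the irreducible smooth $B_f$; so $PC(f)$ coincides with $B_f$ near $w$, is smooth there, and $f|_{PC(f)}$ is the required local biholomorphism. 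The ``$f^{-1}(PC(f))\subset PC(f)$'' and ``$C_f\subset PC(f)$'' claims should be deleted rather than repaired.
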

\begin{remark}\label{rm_non superattracting in tangent direction}
	In particular, Proposition \ref{prop_locally biholomorphism in tangent direction} implies that if $f$ has a fixed point $z \in \Reg PC(f)$ then $D_{z} f|_{T_{z} PC(f)}$ is invertible. Hence $D_{z} f|_{T_{z} PC(f)}$ does not have any superattracting eigenvalue.
\end{remark}
If $\lambda \neq 0$ then $z$ is not a critical point. By Proposition \ref{prop_properties of affine homogeneous maps}, the modulus of $\lambda$ is at least $1$. Then we will prove Proposition \ref{prop_transversal eigenvalue} by contradiction by assuming that $|\lambda|=1$. If $|\lambda|=1$, there exists an associated eigenvector $v$ of $D_{z} f$ such that $v \notin T_{z} PC(f)$. Indeed, note that 
\[\ssp(D_z f) = \ssp(D_z f|_{T_{z} PC(f)}) \cup \ssp(\overline{D_z f})
\]
where $\ssp(\overline{D_z f})$ has only one eigenvalue $\lambda$ of modulus one. Then the repelling eigenspace $E_r$ is included in $T_z PC(f)$. The diagonalizability of $D_z f|_{E_n}$ implies that $E_n$ is generated by a basis of eigenvectors. The vector $v$ is such an eigenvector which is not in $T_z PC(f)$.

\subsection{$(X,z)$-homotopy and related constructions}Denote by $X = \C^n \setminus PC(f)$.
\subsubsection{Construction of $\widetilde{X}$} We construct a complex manifold $\widetilde{X}$, a covering map $\p: \widetilde{X} \rightarrow X$ and a holomorphic map $g: \widetilde{X} \rightarrow \widetilde{X}$ such that 
\[f \circ \p \circ g = \p.
\] Denote by 
\[\Xi=\{\gamma : [0,1]\rightarrow \C^n \text{ continuous map such that } \gamma(0)=z, \gamma((0,1]) \subset X \}
\]
the space of paths starting at $z$ and varying in $X$. Let $\gamma_0,\gamma_1 \in \Xi$. We say that $\gamma_0$ and $\gamma_1$ are \textit{$(X,z)$-homotopic} if 
there exists a continuous map $H: [0,1] \times [0,1] \rightarrow \C^n $ such that 
\[H(0,s)=z,H(1,s)=\gamma_0(1)=\gamma_1(1),
\]
\[H(t,0)=\gamma_0(t),H(t,1)=\gamma_1(t),
\]
\[H(t,s) \subset X \, \forall t \neq 0.
\]

\begin{figure}[h]
	\centering{
		\fbox{\resizebox{100mm}{!}{\includegraphics[scale=1]{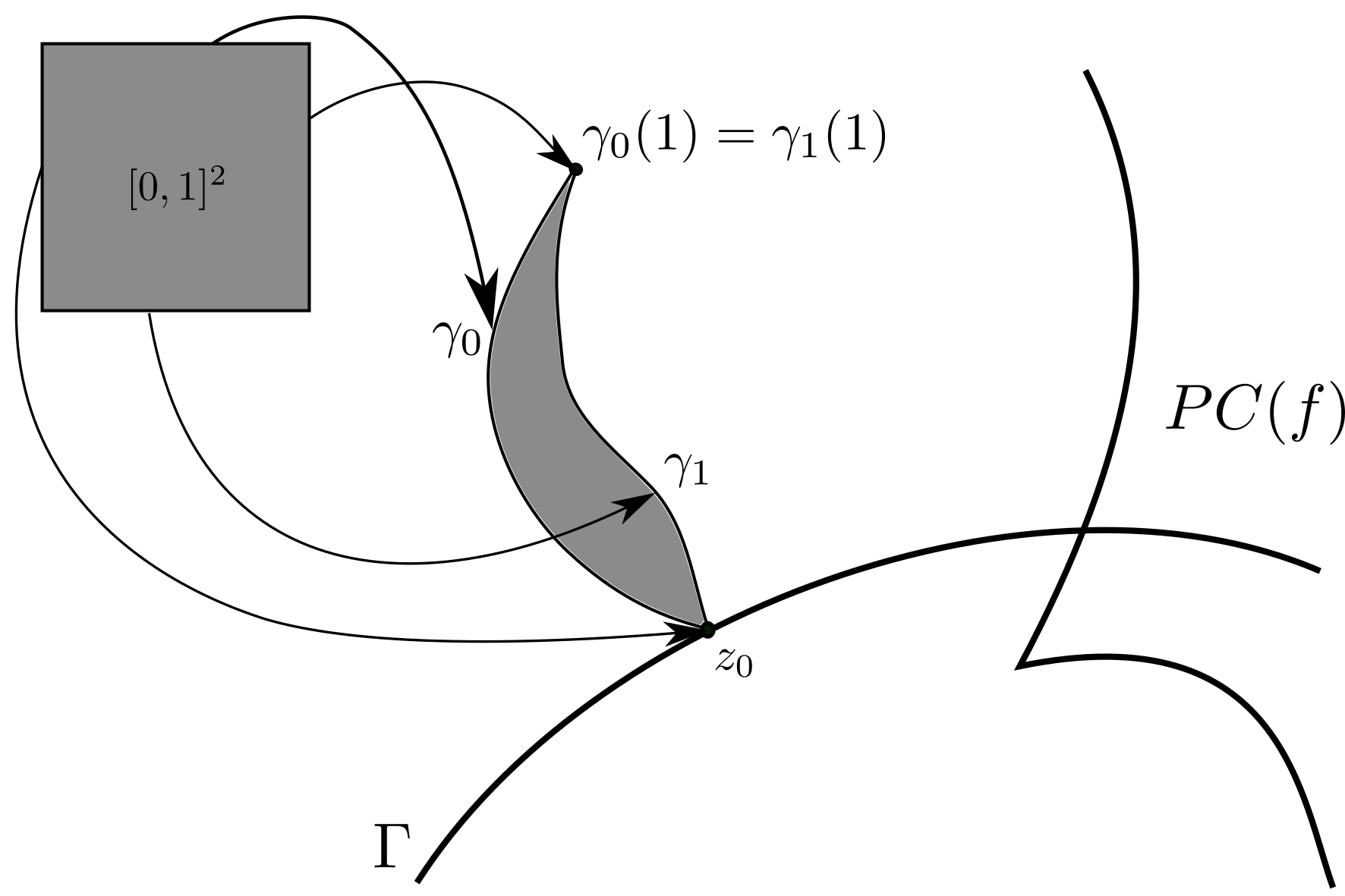}}}
		\caption{Two paths $\gamma_0,\gamma_1$ which are $(X,z)$-homotopic}
		\label{fig:relative}
	}
\end{figure}
Denote by $\gamma_0 \sim_X \gamma_1$ if $\gamma_0$ and $\gamma_1$ are $(X,z)$-homotopic. In other words, $\gamma_0$ and $\gamma_1$ are homotopic by a homotopy of paths $\{\gamma_t,t \in [0,1]\}$ such that $\gamma_t \in \Xi$ for every $t$. It is easy to see that $(X,z)$-homotopy is an equivalence relation on $\Xi$. Denote by $\widetilde{X}$ the quotient space of $\Xi$ by this relation and by $[\gamma]$ the equivalent class of $\gamma \in \Xi$. Denote by 
\[\p: \widetilde{X} \rightarrow X, \p([\gamma]) = \gamma(1)
\]
the projection. We endow $\widetilde{X}$ with a topology constructed in the same way as the topology of a universal covering. More precisely, let $\mathfrak{B}$ be the collection of simply connected open subsets of $X$. Note that $\mathfrak{B}$ is a basis for the usual topology of $X$. We consider the topology on $\widetilde{X}$ which is defined by a basis of open subsets $\{U_{[\gamma]}\}_{U \in \mathfrak{B}, [\gamma] \in X}$ where $\gamma(1) \in U$ and 
\[U_{[\gamma]}= \{[\gamma * \alpha]| \alpha \, \mbox{is a path in $U$ starting at $\gamma(1)$}\}.
\]
We can transport the complex structure of $X$ to $\widetilde{X}$ and this will make $\widetilde{X}$ a complex manifold of dimension $n$. Note that $\p$ is also a holomorphic covering map. 
\subsubsection{Lifts of inverse branches of $f$} We will construct a holomorphic mapping \[g: \widetilde{X} \rightarrow \widetilde{X}\] which is induced by the pullback action of $f$ on paths in $\Xi$.
\begin{lemma}\label{pulling back}
	Let $\gamma$ be a path in $\Xi$. Then there exists a unique path $f^* \gamma \in \Xi$ such that $f \circ f^* \gamma = \gamma$.
\end{lemma}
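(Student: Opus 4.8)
The plan is to lift $\gamma$ in two stages — a purely local one near the base point $z$, and a global one over $X$ via covering space theory — and to extract uniqueness from the same splitting. Recall that, by Step~1, the fixed point $z$ is not a critical point of $f$, so there are open neighbourhoods $U,V$ of $z$ with $f|_U\colon U\to V$ a biholomorphism; write $\s\colon V\to U$ for its inverse branch, so $\s(z)=z$. I shall also use that $Y:=f^{-1}(X)\subset X$ — which holds because $f(PC(f))=\bigcup_{j\ge 2}f^{\circ j}(C_f)\subset PC(f)$ — together with the fact that $f\colon Y\to X$ is an honest (unramified) covering map, the ramification of $f$ being contained in $f(C_f)\subset PC(f)$.

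First I would build $f^*\gamma$. Pick $\eps>0$ so small that $\gamma([0,\eps])\subset V$, and set $f^*\gamma(t):=\s(\gamma(t))$ for $t\in[0,\eps]$. Then $f^*\gamma(0)=\s(z)=z$, and for $t\in(0,\eps]$ one has $\gamma(t)\in X\cap V$, hence $\s(\gamma(t))\in f^{-1}(X)=Y\subset X$. The endpoint $p:=\s(\gamma(\eps))\in Y$ lies above $\gamma(\eps)$, so the path lifting property of the covering $f\colon Y\to X$ produces a continuous lift of $\gamma|_{[\eps,1]}$ to $Y$ starting at $p$; concatenating the two pieces yields a continuous path $f^*\gamma\colon[0,1]\to\C^n$ with $f\circ f^*\gamma=\gamma$, $f^*\gamma(0)=z$ and $f^*\gamma((0,1])\subset Y\subset X$, i.e. $f^*\gamma\in\Xi$.

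For uniqueness I would take any $\eta\in\Xi$ with $f\circ\eta=\gamma$. Since $\eta((0,1])\subset X$ and $f(\eta((0,1]))=\gamma((0,1])\subset X$, we get $\eta((0,1])\subset Y$; and since $\eta(0)=z\in U$, continuity gives $\delta\in(0,\eps]$ with $\eta([0,\delta))\subset U$, on which interval $f(\eta(t))=\gamma(t)$ and the injectivity of $f|_U$ force $\eta(t)=\s(\gamma(t))=f^*\gamma(t)$. Fixing some $t_0\in(0,\delta)$, the restrictions $\eta|_{[t_0,1]}$ and $f^*\gamma|_{[t_0,1]}$ are two lifts of $\gamma|_{[t_0,1]}$ through the covering $f\colon Y\to X$ agreeing at $t_0$, hence equal; combined with agreement on $[0,\delta)$ this gives $\eta=f^*\gamma$ on $[0,1]$.

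The one genuinely delicate point — and essentially the only place where anything beyond standard covering theory is used — is the behaviour at $t=0$, where $\gamma$ emerges from $PC(f)$: it is precisely the hypothesis $z\notin C_f$ (established in Step~1) that makes the local inverse branch $\s$, and hence the lift of $\gamma$ near $0$, unique. If $z$ were instead a ramification point of $f$ of multiplicity $k\ge 2$, then in Grauert normal form the last coordinate of $\gamma$ would admit $k$ continuous $k$-th roots vanishing at $0$, producing $k$ distinct lifts of $\gamma$ in $\Xi$ all issuing from $z$, and the lemma would fail; so I would be careful to invoke $z\notin C_f$ explicitly at exactly that step, while the rest is the unique path lifting for $f\colon Y\to X$ applied on $[\eps,1]$ plus $f(PC(f))\subset PC(f)$.
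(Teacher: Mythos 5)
Your construction — splitting $f^*\gamma$ into the local inverse branch $\s$ on a short initial interval where $\gamma$ stays near $z$, then the unique lift through the unramified covering $f\colon f^{-1}(X)\to X$ on the rest — is exactly the paper's argument, and your uniqueness proof makes explicit what the paper only indicates via independence of the choice of splitting parameter. The proposal is correct and follows essentially the same route.
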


\begin{proof}Since $f$ is locally invertible at $z$ and since $f^{-1}(X) \subset X$, there exists $t_0 \in [0,1]$ such that $\gamma|_{[0,t_0]} \in \Xi$ and $f^{-1}\circ \gamma|_{[0,t_0]}$ is a well-defined element in $\Xi$. Then the path $f^* \gamma$ is the concatenation of $f^{-1} \circ \gamma|_{[0,t_0]}$ with the lifting $f^*\gamma|_{[t_0,1]}$ of the path $\gamma|_{[t_0,1]}$ by the covering $f: f^{-1}(X) \to X$. This construction does not depend on the choice of $t_0$. 
	\begin{figure}[h]\label{fig:pullback def}
		\centering{
			\fbox{\resizebox{100mm}{!}{\includegraphics[scale=0.8]{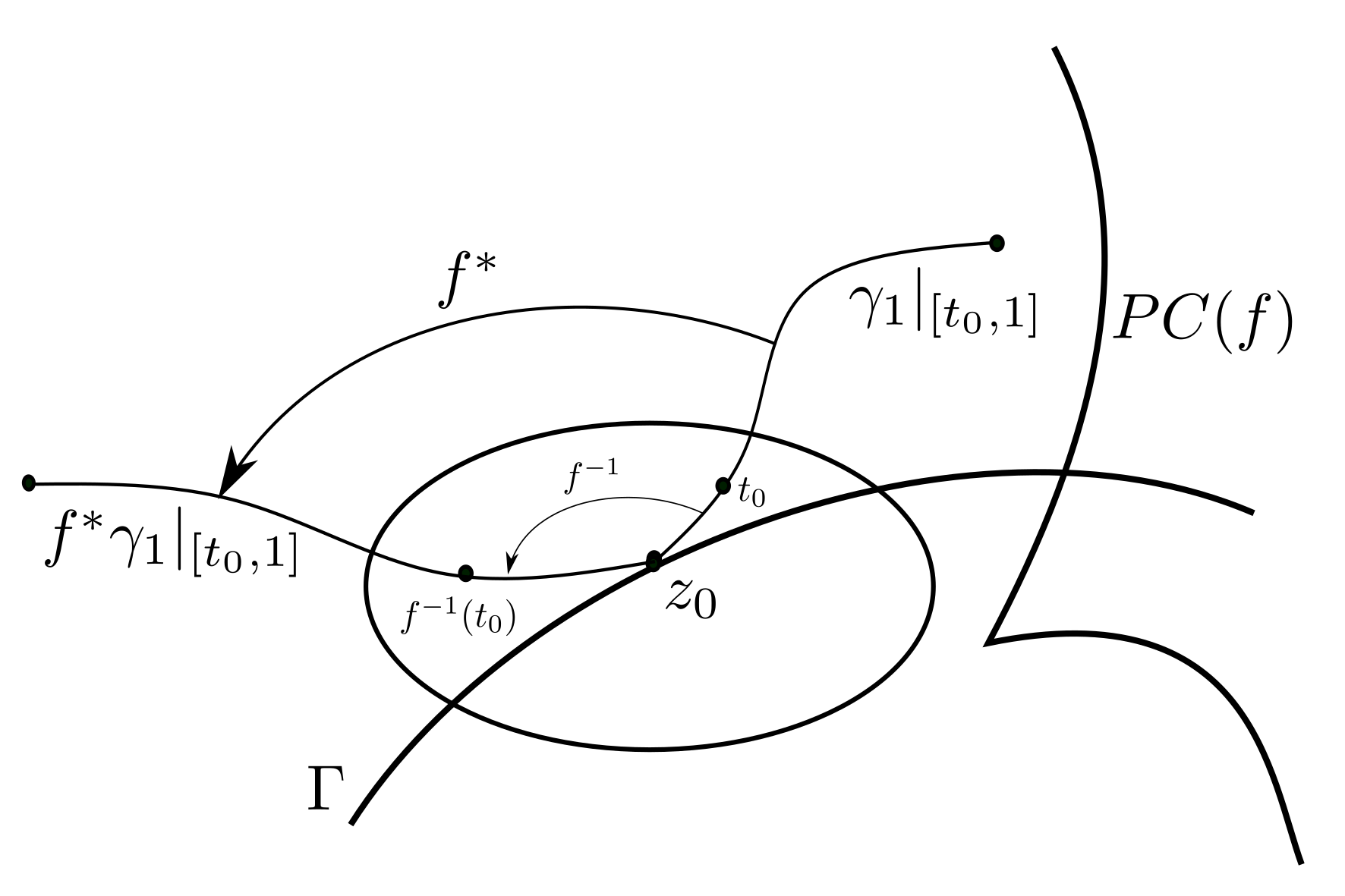}}}
			\caption{Pulling back of an element in $\Xi$.}
			
		}
	\end{figure}
\end{proof}

\begin{lemma}
	Let $\gamma_0,\gamma_1 \in \Xi$. If $[\gamma_0]=[\gamma_1]$ then $[f^*\gamma_0] = [f^*\gamma_1]$.
\end{lemma}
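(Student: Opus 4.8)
The plan is to produce, from a homotopy relative to $X$ between $\gamma_0$ and $\gamma_1$, a homotopy relative to $X$ between $f^*\gamma_0$ and $f^*\gamma_1$. So I start with a continuous map $H\colon[0,1]\times[0,1]\to\C^n$ with $H(0,s)=z$, $H(1,s)=\gamma_0(1)=\gamma_1(1)$, $H(\cdot,0)=\gamma_0$, $H(\cdot,1)=\gamma_1$ and $H(t,s)\in X$ for all $t\neq 0$. For each $s$ put $\gamma_s:=H(\cdot,s)\in\Xi$; by Lemma \ref{pulling back} the pulled-back path $f^*\gamma_s\in\Xi$ is well defined. I claim that $\hat H(t,s):=f^*\gamma_s(t)$ is continuous in $(t,s)$ and is itself a homotopy relative to $X$; granting this, $\hat H(\cdot,0)=f^*\gamma_0$ and $\hat H(\cdot,1)=f^*\gamma_1$ are homotopic relative to $X$, which is exactly the assertion.

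To see that $\hat H$ is continuous I would treat separately a neighbourhood of the basepoint $t=0$ and the complementary region. Since $f$ is locally invertible at $z=f(z)$, fix an open neighbourhood $V$ of $z$ with $f|_V\colon V\to f(V)$ a biholomorphism onto an open neighbourhood of $z$. By continuity of $H$ and compactness of $\{0\}\times[0,1]$ there is $t_0\in(0,1)$ with $H([0,t_0]\times[0,1])\subset f(V)$. On this strip I set $\hat H(t,s):=(f|_V)^{-1}(H(t,s))$, which is continuous, equals $z$ when $t=0$, and for $t\neq 0$ lies in $f^{-1}(X)\subset X$ because $H(t,s)\in X$. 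Since the construction of $f^*\gamma_s$ in the proof of Lemma \ref{pulling back} does not depend on the chosen splitting time, this $\hat H(t,s)$ indeed equals $f^*\gamma_s(t)$ for $(t,s)\in[0,t_0]\times[0,1]$. Over the region $[t_0,1]\times[0,1]$ the map $H$ takes values in $X$, so $H|_{[t_0,1]\times[0,1]}$ is a homotopy in $X$; using that $f\colon f^{-1}(X)\to X$ is a covering map, the homotopy lifting property applied in the $t$-variable with $s$ as parameter gives a unique continuous lift whose restriction to $\{t_0\}\times[0,1]$ is the already-built continuous family $s\mapsto(f|_V)^{-1}(H(t_0,s))\in f^{-1}(X)$. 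By uniqueness of lifts of paths under the covering $f\colon f^{-1}(X)\to X$, for each fixed $s$ this lift coincides with the continuation of $f^*\gamma_s$ beyond $t_0$, so the two partial definitions of $\hat H$ agree along $\{t_0\}\times[0,1]$ and glue to a globally continuous map with $\hat H(t,s)=f^*\gamma_s(t)$ everywhere and $\hat H([t_0,1]\times[0,1])\subset f^{-1}(X)\subset X$.

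Finally I would check the boundary conditions. By construction $\hat H(\cdot,0)=f^*\gamma_0$, $\hat H(\cdot,1)=f^*\gamma_1$, $\hat H(0,s)=z$ for all $s$, and $\hat H(t,s)\in X$ whenever $t\neq 0$. For the terminal point, $s\mapsto\hat H(1,s)$ is continuous with values in $f^{-1}(\gamma_0(1))$; since $\gamma_0(1)\in X$ avoids the ramification locus, this fibre is a finite, hence discrete, subset of $f^{-1}(X)$, so $s\mapsto\hat H(1,s)$ is constant, equal to $f^*\gamma_0(1)$. Therefore $\hat H$ is a homotopy relative to $X$ from $f^*\gamma_0$ to $f^*\gamma_1$, and $[f^*\gamma_0]=[f^*\gamma_1]$.

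The one genuinely delicate point is the behaviour near $t=0$: there the paths meet $PC(f)$, the covering $f\colon f^{-1}(X)\to X$ degenerates, and one cannot invoke homotopy lifting directly. The remedy is to replace homotopy lifting on that small strip by the local inverse of $f$ at $z$ (available because $z\notin C_f$, as established in Step 1), and then to glue this with the covering-space lift of the rest; the glueing is legitimate precisely because of the uniqueness clause in the path and homotopy lifting properties over $X$, which forces the two constructions to coincide where they overlap.
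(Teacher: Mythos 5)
Your proposal is correct and takes essentially the same approach as the paper, namely pulling back the homotopy $H$ to a homotopy $\hat H$ between $f^*\gamma_0$ and $f^*\gamma_1$; the paper simply leaves the continuity of the pulled-back homotopy to the reader, whereas you supply the (correct) local-inverse-near-$z$ plus covering-space-homotopy-lifting argument and the glueing via uniqueness of lifts.
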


\begin{proof}
	Lemma \ref{pulling back} implies that the pull-back of a homotopy of paths in $\Xi$ between $\gamma_0$ and $\gamma_1$ is a homotopy of paths in $\Xi$ between $f^*\gamma_0$ and $f^*\gamma_1$. See also Figure \ref{fig_pullback}.
\end{proof}
\begin{figure}[H]\label{fig_pullback}
	\centering{
		\fbox{\resizebox{100mm}{!}{\includegraphics[scale=1]{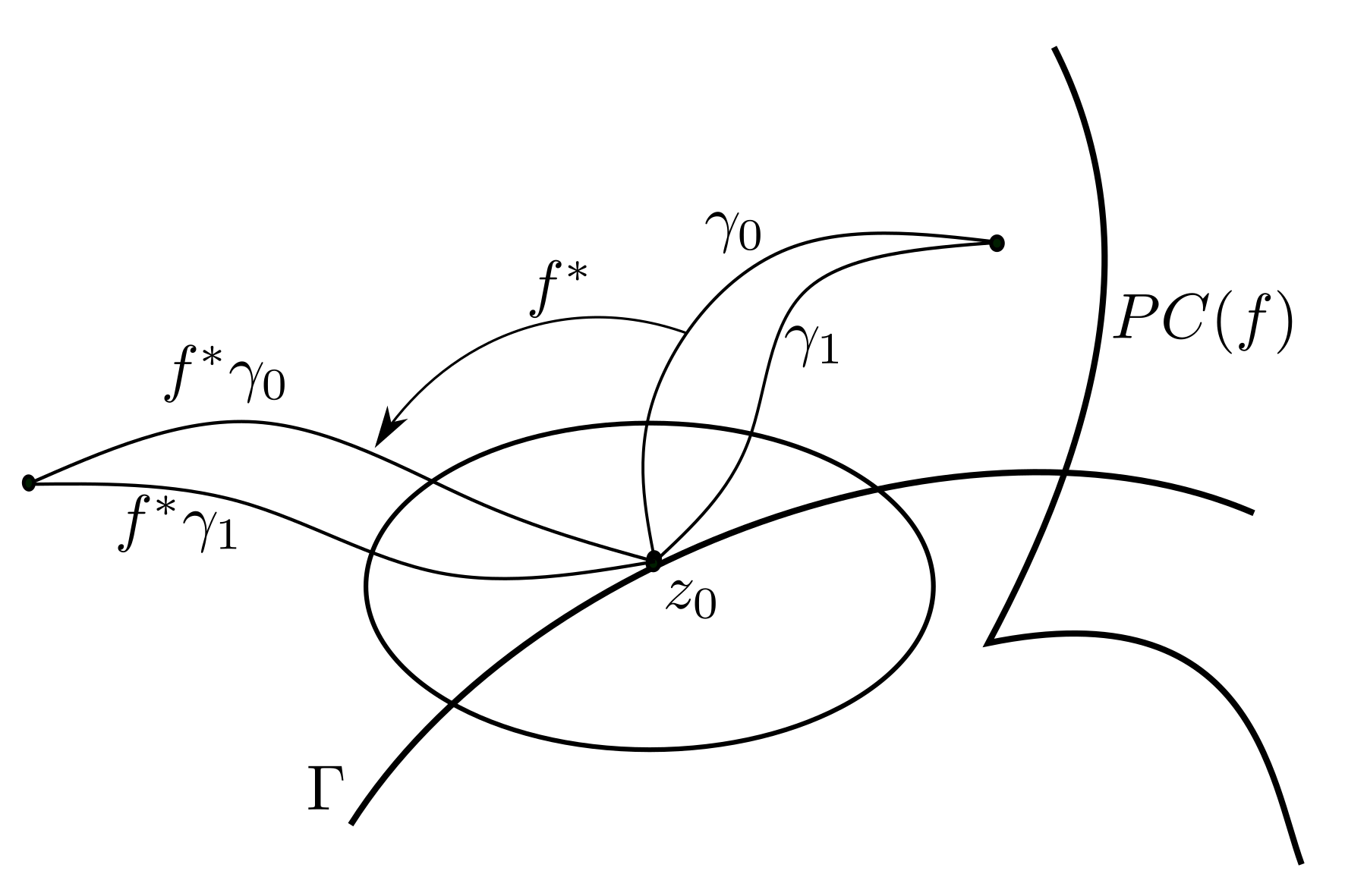}}}
		\caption{Pull-back preserves the $(X,z)$-homotopic paths}
		
	}
\end{figure}
The two previous lemmas allow us to define a map $g: \widetilde{X} \rightarrow \widetilde{X}$ as follow
\[g([\gamma]) = [f^*\gamma].
\]
Then $f \circ \p \circ g = \p.$
\subsubsection{The connectedness of $\widetilde{X}$.}
The connectedness of $\widetilde{X}$ is not obvious from the construction. We will introduce a notion of \textit{regular neighborhood} which is not only useful in proving $\widetilde{X}$ is connected but also very important later.
\begin{definition}\label{def_regular neighborhood}
	A bounded open subset $W$ of $\C^n$ containing $z$ is called a \textit{regular neighborhood }of $z$ if:
	\begin{itemize}
		\item[1.] $(W,W \cap PC(f))$ is homeomorphic to a cone over $(\partial W, \partial W \cap PC(f))$ with a vertex at $z$.
		\item[2.] For every path $\gamma_0,\gamma_1 \in \Xi$ such that $\gamma_0([0,1]),\gamma_1([0,1]) \subset W$ and $\gamma_0(1) = \gamma_1(1)$. Then $\gamma_0 \sim_X \gamma_1$.
	\end{itemize}
\end{definition}

Let $W$ be an open subset of $\C^n$ containing $z$. Set 
\[\widetilde{W}= \{[\gamma]| \gamma \in \Xi, \gamma((0,1]) \subset W \}.
\]
\begin{lemma}\label{neighbor path connected}
	If $W$ is a regular neighborhood of $z$, then $\p: \widetilde{W} \to W \setminus PC(f)$ is a biholomorphism.
\end{lemma}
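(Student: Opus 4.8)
The plan is to show that $\pi|_{\widetilde{W}}$ is a holomorphic bijection which is a local biholomorphism, whence it has a holomorphic inverse and is a biholomorphism. First I would check the easy bookkeeping: for $[\gamma]\in\widetilde{W}$ one has $\gamma(1)\in W$ by definition and $\gamma(1)\in X$ since $\gamma((0,1])\subset X$, so $\pi$ indeed maps $\widetilde{W}$ into $W\setminus PC(f)$. Next I would verify that $\widetilde{W}$ is open in $\widetilde{X}$: given $[\gamma]\in\widetilde{W}$, choose $U\in\mathfrak{B}$ with $\gamma(1)\in U\subset W\setminus PC(f)$; then every concatenation $\gamma*\alpha$ with $\alpha$ a path in $U$ keeps its image inside $W$ and its restriction to $(0,1]$ inside $X$, so the whole basic neighbourhood $U_{[\gamma]}$ lies in $\widetilde{W}$. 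Hence $\widetilde{W}$ is an open subset of the complex manifold $\widetilde{X}$, so a complex manifold of dimension $n$, and $\pi|_{\widetilde{W}}$ is holomorphic and a local biholomorphism, being a restriction of the holomorphic covering $\pi\colon\widetilde{X}\to X$.

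The heart of the argument is surjectivity, which I would deduce from condition (1) in the definition of a regular neighbourhood. Fix $w\in W\setminus PC(f)$; we may assume $w\neq z$, since if $w=z$ then $z\notin PC(f)$ and the constant path at $z$ does the job. Let $\Theta$ be a homeomorphism of pairs from $(W,W\cap PC(f))$ onto the cone over $(\partial W,\partial W\cap PC(f))$ with vertex $\Theta(z)$, and write $\Theta(w)$ as the class of a pair $(t_0,p_0)$ with $t_0\in(0,1]$ and $p_0\in\partial W$. Since $w\notin PC(f)$, the point $p_0$ does not lie in $\partial W\cap PC(f)$, so the entire radial segment through $p_0$, minus the vertex, maps under $\Theta^{-1}$ into $W\setminus PC(f)$. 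The reparametrised radial path $\gamma(t)=\Theta^{-1}\bigl(\text{class of }(t t_0,p_0)\bigr)$ is then a continuous path from $z$ to $w$ with $\gamma([0,1])\subset W$ and $\gamma((0,1])\subset X$; hence $\gamma\in\Xi$, $[\gamma]\in\widetilde{W}$ and $\pi([\gamma])=w$. It is essential here that elements of $\Xi$ need only be continuous, so the purely topological cone structure suffices and no smoothing is needed.

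Injectivity is then immediate from condition (2): if $[\gamma_0],[\gamma_1]\in\widetilde{W}$ satisfy $\pi([\gamma_0])=\pi([\gamma_1])$, i.e.\ $\gamma_0(1)=\gamma_1(1)$, then $\gamma_0,\gamma_1$ are two elements of $\Xi$ whose images lie in $W$ and whose endpoints coincide, so they are homotopic relative to $X$, that is $[\gamma_0]=[\gamma_1]$. Combining the three steps, $\pi|_{\widetilde{W}}\colon\widetilde{W}\to W\setminus PC(f)$ is a holomorphic bijection that is a local biholomorphism, hence a biholomorphism.

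The only real subtlety I expect is in the surjectivity step: one must make precise that the preimage under $\Theta$ of the sub-cone over $\partial W\cap PC(f)$ is exactly $W\cap PC(f)$, so that radial segments through points of $\partial W\setminus PC(f)$ avoid $PC(f)$ away from the vertex, and one must be mindful that the vertex $z$ itself may belong to $PC(f)$ — which is precisely why the defining paths in $\Xi$ are only required to avoid $PC(f)$ on $(0,1]$. Everything else reduces to unwinding the definitions.
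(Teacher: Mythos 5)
Your proof is correct and follows the same overall structure as the paper's: show $\widetilde{W}$ is open (so that $\pi|_{\widetilde W}$ is a local biholomorphism), then show bijectivity, with injectivity coming directly from condition (2) in the definition of a regular neighborhood. The one place you diverge is surjectivity: you build a radial path explicitly from the cone structure (condition (1)), whereas the paper instead appeals to the path-connectedness of $W\setminus PC(f)$ (concatenating a path in $W\setminus PC(f)$ onto some already-existing $\gamma_0\in\Xi$). Your version is a bit more self-contained, since path-connectedness alone still requires at least one element of $\widetilde W$ to exist — which again needs the cone structure or a similar argument — so you have in fact tidied up a small implicit step; substantively the two proofs are the same.
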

\begin{proof} 
	We can observe that $\widetilde{W}$ is open. Indeed, for an element $[\gamma]$ in $\w$, let $U$ be an open set in $W \setminus PC(f)$ containing $\gamma(1)$, then
	$U_{[\gamma]} \subset \w$.
	The projection $\p|_{\w}: \w \rightarrow  W \setminus PC(f)$ is surjective since $W \setminus PC(f)$ is path-connected. So we have to prove that $\p: \w \rightarrow W \setminus PC(f)$ is injective. Indeed, let $z$ be a point in $W \setminus PC(f)$ and let $[\gamma_0],[\gamma_1]$ be two elements in $\w$ such that $\p([\gamma_0])=\p([\gamma_1])=z$, i.e. $\gamma_0(1)=\gamma_1(1)$. Since $W$ is regular, we have $\gamma_0 \sim_X \gamma_1$ or $[\gamma_0]=[\gamma_1]$. So $\p|_{\w}$ is injective hence biholomorphic.
\end{proof}
In particular, $\widetilde{W}$ is path-connected. If $\gamma:[0,1] \to X$ is a path in $\Xi$ then the path $\gamma_s:[0,1] \to \C^n$ defined by $\gamma_s(t)= \gamma(t(1-s))$ also belongs to $\Xi$ and $\gamma_s([0,1]) = \gamma([0,1-s])$. It follows that every element in $\widetilde{X}$ can be joined by paths to an element in $\widetilde{W}$ thus $\widetilde{X}$ is path-connected hence connected. 

Now we will prove that we can indeed find a regular neighborhood when $z$ is a regular point of $PC(f)$. Let $(\zeta_1,\ldots,\zeta_n)$ be a local coordinates vanishing at $z$ in which $PC(f)$ is given by $\{\zeta_1=0\}$. Let $U$ be the unit polydisk centered at $z$.  
\begin{proposition}\label{prop_existence of regular neighborhood}
	Any polydisc centered at $z$ in $U$ is a regular neighborhood.
\end{proposition}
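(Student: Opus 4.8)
The plan is to verify directly the two conditions in Definition~\ref{def_regular neighborhood}. Identify $U$ with a polydisc in $\C^n$ through the given chart, so that $z=\textbf{0}$, $PC(f)\cap U=\{\zeta_1=0\}\cap U$, and let $W=\{|\zeta_j|<\rho_j,\ 1\le j\le n\}\subseteq U$ be a polydisc centred at $z$. The only properties of $W$ that will be used are: $W$ is star-shaped about $z$, i.e. $sW\subseteq W$ for $s\in[0,1]$; the scaling $\zeta\mapsto s\zeta$ preserves $\{\zeta_1=0\}$; and, since $PC(f)\cap W=\{\zeta_1=0\}\cap W$, a point of $W$ lies off $PC(f)$ exactly when its first coordinate is nonzero — a condition preserved by the scalings $\zeta\mapsto s\zeta$ with $s>0$.

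For the first condition I would exhibit the cone structure explicitly: the map $c(\zeta,u)=u\zeta$, from the open cone $\bigl(\partial W\times[0,1)\bigr)/\bigl(\partial W\times\{0\}\bigr)$ to $W$, is a homeomorphism, with continuous inverse sending $w\ne\textbf{0}$ to $\bigl(w/u(w),u(w)\bigr)$ and $\textbf{0}$ to the cone point, where $u(w)=\max_j|w_j|/\rho_j$. Since $\zeta\mapsto u\zeta$ preserves $\{\zeta_1=0\}$, $c$ restricts to a homeomorphism of the subcone over $\partial W\cap PC(f)$ onto $W\cap PC(f)$, which is exactly the required homeomorphism of pairs with vertex $z$.

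The substance of the proposition is the second condition, and the key claim I would prove is: every $\gamma\in\Xi$ with $\gamma([0,1])\subseteq W$ is homotopic relative to $X$ to the radial path $\sigma\colon t\mapsto t\,p$, where $p:=\gamma(1)$. Note that $p\in X$, so $p_1\ne0$, whence $\sigma\in\Xi$ with $\sigma([0,1])\subseteq W$. The homotopy is the ``radial collapse''
\[
H(t,s)=\begin{cases} s\,\gamma(t/s) & \text{if } 0\le t\le s,\\ t\,p & \text{if } s\le t\le 1, \end{cases}
\]
for $0<s\le 1$, together with $H(t,0)=t\,p$. The two branches agree on $t=s$ (both equal $s\,p$), so $H$ is well-defined; I would check that $H$ is continuous on $[0,1]^2$, the only point needing attention being continuity as $s\to0^{+}$ — for $t>0$ one is eventually on the branch $H(t,s)=t\,p$, while near $t=0$ one has $|s\,\gamma(t/s)|\le s\sup_{[0,1]}|\gamma|\to0$. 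Then $H(\cdot,1)=\gamma$, $H(\cdot,0)=\sigma$, $H(0,s)=\textbf{0}=z$, $H(1,s)=p$, and, for every $t\ne0$, $H(t,s)\in X$: on the first branch $t/s\in(0,1]$ gives $\gamma(t/s)\in W\setminus PC(f)$, and multiplying by $s>0$ keeps the point in $W$ with nonzero first coordinate, hence off $PC(f)$; on the second branch $t\,p\in W$ has first coordinate $t\,p_1\ne0$. This proves $\gamma\sim_X\sigma$. Consequently, if $\gamma_0,\gamma_1\in\Xi$ both have image in $W$ and $\gamma_0(1)=\gamma_1(1)=p$, then $\gamma_0\sim_X\sigma\sim_X\gamma_1$, and since $\sim_X$ is an equivalence relation, $\gamma_0\sim_X\gamma_1$ — this is the second condition.

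The step I expect to be the most delicate is precisely the verification that the radial collapse $H$ stays in $X$ over $\{t\ne0\}$ while remaining continuous up to $s=0$; this is where the polydisc hypothesis is used in full, both that $W$ is star-shaped about $z$ and that, \emph{inside $W$}, avoiding $PC(f)$ amounts to the scaling-stable condition $\zeta_1\ne0$. Heuristically, any winding of $\gamma$ around $PC(f)$ is compressed into the endpoint $t=0$ as $s\to0$, and the definition of $\sim_X$ tolerates the homotopy meeting $PC(f)$ exactly there; this is also the reason a neighbourhood of $z$ that is not star-shaped about $z$ with $PC(f)$ a cone need not be regular.
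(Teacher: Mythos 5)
Your proof is correct and rests on the same structural observation as the paper's: in the chosen chart the polydisc $W$ is star-shaped about $z=\textbf{0}$, and the scalings $\zeta\mapsto s\zeta$ with $s\in(0,1]$ preserve both $W$ and the complement of $PC(f)=\{\zeta_1=0\}$, which makes radial contraction toward the cone vertex a valid homotopy relative to $X$. The routes differ slightly in implementation. The paper takes the loop $\eta=\gamma_0*(-\gamma_1)$ and uses the single formula $H(t,s)=s\cdot\eta(t)$ as a cone-off of $\eta$, then appeals (in the phrase ``$\eta$ bounds \dots which implies $\gamma_0\sim_X\gamma_1$'') to the standard translation between a nullhomotopy of a concatenated loop and a relative homotopy of its two halves; it does not spell out the reparametrization, and the stated inclusion $H([0,1]^2\setminus\{(0,0)\})\subset X$ is not literally correct as written (e.g.\ $H(1,0)=\textbf{0}\notin X$), so the reader is expected to supply the bookkeeping. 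You instead normalize each $\gamma$ to the radial path $\sigma(t)=tp$ via an explicit two-piece ``radial collapse'' homotopy, verify continuity (including the delicate $s\to0^+$ limit), and then conclude by transitivity of $\sim_X$; this makes the argument self-contained and checks all the conditions in Definition~\ref{def_regular neighborhood} directly. You also exhibit the cone homeomorphism for condition~1 explicitly, which the paper leaves to the surrounding discussion. So the underlying idea is the paper's, but your version is more careful and fills in a step the paper only gestures at.
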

\begin{proof}
	Let $\gamma_0$ and $\gamma_1$ be two elements of $\Xi$ such that $\gamma_0([0,1]), \gamma_1([0,1])\in U$ and $\gamma_0(1) = \gamma_1(1)$. Consider the loop $\eta = \gamma_0 *(-\gamma_1)$ and the continuous map $H:[0,1]\times [0,1]\to \C^n$ defined by 
	\[H(t,s) = s\cdot \eta(t).\]
	The loop $\eta$ bounds $H([0,1]\times [0,1] \setminus \{(0,0)\})\subset X$, which implies that $\gamma_0\sim_X \gamma_1$. 
\end{proof}

\begin{remark}\label{rm_regular neighborhood} The construction above also implies that $z$ admits a basis of neighborhoods consisting of regular neighborhoods. 
\end{remark}
\subsubsection{Dynamics of $g$ on regular neighborhoods.}\label{sec_regular neighborhoods}
Let $W$ be a regular neighborhood of $z$ and \[\sigma: W \setminus PC(f) \rightarrow \widetilde{W} \] be the inverse of $\p|_{\w}$. Note that if $W$ is constructed as above, then by Proposition \ref{prop_properties of affine homogeneous maps}.b, there exists a family of holomorphic maps $h_j: W \rightarrow \C^n , j \ge 1$  such that
\[h_j(z) = z, f^{\circ j } \circ h_j = \Id_W.
\]  
We can deduce from the definition of $g$ that for every $j \ge 1$, $\p \circ g^{\circ j} \circ \sigma = h_j|_{W \setminus PC(f)}$. More precisely, let $[\gamma] \in \widetilde{W}$, i.e. $\gamma((0,1]) \subset W \setminus PC(f)$, then by definition, we have for every $j \ge 1$,
\begin{equation}\label{local representation of g}
g^{\circ j}([\gamma]) = [h_j \circ \gamma].
\end{equation}

Recall that the family $\{h_j: W \to \C^n\}_j$ is normal. The assumption $|\lambda|=1$ allows us to control the value taken by any limit maps of this family. Note that $f^{-1}(X) \subset X$ hence $h_j ( W \setminus PC(f) ) \subset X$ for every $j \ge 1$.
\begin{lemma}\label{lm_limit of inverse branches}
	Let $h=\lim\limits_{s \to \infty} h_{j_s}$ be a limit map of $\{h_j: W \to \C^n\}_j$. Then \[h(W \setminus PC(f)) \subset X.\]
\end{lemma}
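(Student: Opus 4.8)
The plan is to use Hurwitz's theorem to reduce to the dichotomy ``$h(W\setminus PC(f))\subset X$'' versus ``$h(W\setminus PC(f))\subset PC(f)$'', and then to exclude the second possibility by a differential computation at $z$ exploiting the standing assumption $|\lambda|=1$. Let $Q$ be a defining polynomial of $PC(f)$. Since $h_j(W\setminus PC(f))\subset X$ for every $j$, the holomorphic functions $Q\circ h_j\colon W\setminus PC(f)\to\C$ are nowhere vanishing; moreover $W\setminus PC(f)$ is connected, since $W$ is a polydisc in the coordinates of Proposition~\ref{prop_existence of regular neighborhood}, in which $PC(f)\cap W=\{\zeta_1=0\}\cap W$. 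The family $\{h_j\}$ is normal, hence so is $\{Q\circ h_j\}$, and by Hurwitz's theorem $Q\circ h$ is either nowhere vanishing on $W\setminus PC(f)$ --- which is precisely the desired conclusion --- or identically zero there. So it suffices to rule out $Q\circ h\equiv0$.

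Suppose $Q\circ h\equiv0$ on $W\setminus PC(f)$. As $W\setminus PC(f)$ is dense in $W$ and $PC(f)$ is closed, $h(W)\subset PC(f)$; and since $h(z)=z\in\Reg PC(f)$ with $\Reg PC(f)$ open in $PC(f)$, there is a neighborhood $W'\subset W$ of $z$ on which $h$ takes values in the complex submanifold $\Reg PC(f)$, so that $D_zh(T_z\C^n)\subset T_zPC(f)$. On the other hand, $z\notin C_f$ (Step~1), so $D_zh_j=(D_zf)^{-j}$ and $D_zh=\lim_s(D_zf)^{-j_s}$. With the $D_zf$-invariant splitting $T_z\C^n=E_n\oplus E_r$ of Proposition~\ref{prop_properties of affine homogeneous maps}.c, the subspace $E_n$ is $D_zh$-invariant and $D_zh|_{E_n}=\lim_s(D_zf|_{E_n})^{-j_s}$; since (Step~1) $D_zf|_{E_n}$ is diagonalizable with all eigenvalues of modulus $1$, its inverse powers lie in a fixed compact subgroup of $\GL(E_n)$, so $D_zh|_{E_n}$ is invertible and $D_zh(E_n)=E_n$.

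Finally, by Step~1 there is an eigenvector $v$ of $D_zf$ for the eigenvalue $\lambda$ with $v\notin T_zPC(f)$, and $v\in E_n$. Then $v\in E_n=D_zh(E_n)\subset D_zh(T_z\C^n)\subset T_zPC(f)$, a contradiction; hence $Q\circ h$ cannot vanish on $W\setminus PC(f)$, i.e. $h(W\setminus PC(f))\subset X$. The step I expect to be the crux --- and the only place the hypothesis $|\lambda|=1$ is used --- is the invertibility of $D_zh|_{E_n}$: it rests on the fact that the neutral part $D_zf|_{E_n}$ sits inside a compact subgroup of the linear group, so that no limit of inverse iterates can collapse the transverse direction $v$. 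Everything else is just the normality of $\{h_j\}$ combined with Hurwitz's theorem.
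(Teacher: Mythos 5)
Your proof is correct and takes essentially the same approach as the paper: Hurwitz's theorem gives the dichotomy ``nowhere vanishing vs.\ identically zero'' for $Q\circ h$, and the hypothesis $|\lambda|=1$ then rules out the second case through a differential computation at $z$. The paper is slightly more direct at the final step --- it computes $D_zh(v)=\lambda'^{-1}v$ outright, where $\lambda'$ is a unit-modulus limit value of $\{\lambda^j\}$, so $D_zh(v)$ is a nonzero multiple of $v\notin T_zPC(f)$ --- whereas you deduce the same obstruction by first establishing invertibility of $D_zh|_{E_n}$ via the compact subgroup containing the inverse powers of $D_zf|_{E_n}$; both variants are fine and interchangeable.
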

\begin{proof}
	Recall that $PC(f)$ is the zero locus of a polynomial $Q: \C^n \to \C$. Then \[Q \circ h= \lim\limits_{s \to +\infty} Q \circ h_{j_s}.\] Since $h_{j_s}(W \setminus PC(f)) \subset X$, the map $Q \circ h|_{W \setminus PC(f)}$ is the limit of a sequence of nonvanishing holomorphic functions. By Hurwitz's theorem, $Q \circ h|_{W \setminus PC(f)}$ is either a nonvanishing function or identically $0$, i.e. either $h(W \setminus PC(f)) \subset X$ or $h(W \setminus PC(f)) \subset PC(f)$. 
	
	Let $v$ be an eigenvector of $D_{z} f$ associated to the eigenvalue $\lambda$. Then we have
	$D_{z} h_j(v) = \frac{1}{\lambda^j}v$. Hence 
	\[D_{z} h(v) = \frac{1}{\lambda'} v
	\]
	for some limit value $\lambda'$ of $\{\lambda^j\}_j$. Since we assumed that $|\lambda|=1$ hence $|\lambda'|=1$. The fact that $v \notin T_{z} PC(f)$ implies that $D_{z} h(v) \notin T_{z} PC(f)$. Consequently, we have \[h(W \setminus PC(f)) \cap X \neq \emptyset\] thus $h(W \setminus PC(f)) \subset X$.
\end{proof}
\subsection{Normality of family of maps lifted via the relative homotopy}  We will prove that $\{g^{\circ j}: \widetilde{X} \to \widetilde{X} \}_j$ is normal. Following \ref{sec_step 2 case 1}, it is enough to prove two following lemmas.
\begin{lemma}\label{lm_lifitng case 2}
	The family $\{k_j= \p \circ g^{\circ j}: \widetilde{X} \to X\}_j$ is normal and any limit map can be lifted by $\p$ to a holomorphic endomorphism of $\widetilde{X}$.
\end{lemma}
\begin{proof}
	Note that $\{k_j: \widetilde{X} \to \C^n \}_j$ is locally uniformly bounded hence normal (see Proposition \ref{backward manifold}). Consider a limit map $k$ of this family, by using Hurwitz's theorem, we deduce that either $k(\widetilde{X}) \subset X$ or $k(\widetilde{X}) \subset PC(f)$. 
	
	Let $W$ be a regular neighborhood of $z$ then there exists a family $\{h_j: W \to \C^n\}_j$ of $f^{\circ j}$ fixing $z$ (see \ref{sec_regular neighborhoods}). We have \[k_j|_{\widetilde{W}} \circ \sigma = h_j|_{W \cap X}\]  where $\sigma: W \setminus PC(f) \to \widetilde{W}$ is the section of $\p|_{\widetilde{W}}$. Therefore $k|_{\widetilde{W}} \circ \sigma$ is a limit map of $\{h_j|_{W \setminus PC(f)}\}$. Lemma \ref{lm_limit of inverse branches} implies that $k(\widetilde{W}) \subset X$, and hence, $k(\widetilde{X}) \subset X$. Thus $\{k_j: \widetilde{X} \to X \}_j$ is normal and any limit map takes values in $X$.  
	
	We shall now show that the map $k \colon \widetilde{X} \to X$ can be lifted to a map from $\widetilde{X}$ to $\widetilde{X}$. Set 
	\[
	h \colon= k|_{\widetilde{W}} \circ \sigma
	\]
	then $h$ is a limit map of $\{ h_j|_{W \setminus PC(f)} \}$. For each element $[\gamma] \in\widetilde{X}$, we denote by $\eta$ the image of $\gamma$ under the analytic continuation of $h$ along $\gamma$. Note that $h$ fixes $z$. Thus, Lemma \ref{lm_limit of inverse branches} implies that $\eta \in \Xi$. This construction does not depends on the choice of $\gamma$ in the equivalence class $[\gamma]$. Thus, the map \[
	\begin{array}{cccc}
	\widetilde{k} \colon & \widetilde{ X} & \to & \widetilde{X}\\
	& [\gamma] & \mapsto & [\eta]
	\end{array}
	\]
	is well-defined. The map $\pi \circ \widetilde{k}$ coincides with $k$ on an open set $\widetilde{W}$ in $\widetilde{X}$, and hence coincides with $k$ on $\widetilde{X}$. In other words, $\widetilde{k}$ is a lifted map of $k$ by $\pi$.
\end{proof}
Hence we deduce that:
\begin{proposition}
	The family $\{g^{\circ j}: \widetilde{X} \to \widetilde{X} \}_j$ is normal.
\end{proposition}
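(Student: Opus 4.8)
The plan is to imitate the proof of Proposition~\ref{normality of g} from Section~3, with the simple connectedness of $\widetilde X$ replaced by Lemma~\ref{lm_lifitng case 2}. Given an arbitrary sequence of iterates $\{g^{\circ j_s}\}_s$, I would first use the preceding normality statement for the family $\{k_j=\p\circ g^{\circ j}: \widetilde X\to X\}_j$ to extract a subsequence along which $k_{j_s}$ converges locally uniformly to a holomorphic map $k: \widetilde X\to X$. Each $g^{\circ j_s}$ is a holomorphic lift of $k_{j_s}$ through the covering $\p: \widetilde X\to X$, so by \cite[Theorem~4]{andreian2003coverings} it suffices to exhibit a single point $\widetilde x_0\in\widetilde X$ at which $\{g^{\circ j_s}(\widetilde x_0)\}_s$ converges; then $g^{\circ j_s}$ converges locally uniformly to a lift of $k$ (such a lift exists in any case by Lemma~\ref{lm_lifitng case 2}), which proves that $\{g^{\circ j}\}_j$ is normal.

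The delicate point is producing the anchor $\widetilde x_0$. In Section~3 one used the class $[z]$, fixed by every iterate of $g$; here $z\in PC(f)$ and $g$ has no fixed point in $\widetilde X$, so the anchor must be built locally at $z$. I would fix a regular neighborhood $W$ of $z$ (Proposition~\ref{prop_existence of regular neighborhood}), together with the section $\sigma: W\setminus PC(f)\to\widetilde W$ of $\p|_{\widetilde W}$ and the inverse branches $h_j: W\to\C^n$ of $f^{\circ j}$ fixing $z$. After one more extraction, allowed since $\{h_j\}$ is normal, I may assume $h_{j_s}\to h$ locally uniformly on $W$; and by Lemma~\ref{lm_limit of inverse branches} --- this is exactly where the running hypotheses $|\lambda|=1$ and $v\notin T_z PC(f)$ are used --- the limit $h$ sends $W\setminus PC(f)$ into $X$. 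I would then choose $[\gamma_0]\in\widetilde W$ with $\gamma_0([0,1])\subset W$, set $\widetilde x_0=[\gamma_0]$, and use the local description~\eqref{local representation of g} to write $g^{\circ j_s}([\gamma_0])=[h_{j_s}\circ\gamma_0]$. Here each $h_{j_s}\circ\gamma_0$ is an element of $\Xi$ --- it starts at $z$ because $h_{j_s}(z)=z$, and stays in $X$ for positive time because $h_{j_s}(W\setminus PC(f))\subset X$ --- its uniform limit $h\circ\gamma_0$ is again in $\Xi$, and since $\p$ is a covering, the uniform convergence of these paths should force the homotopy-relative-to-$X$ classes $[h_{j_s}\circ\gamma_0]$ to stabilize, so that $g^{\circ j_s}(\widetilde x_0)\to[h\circ\gamma_0]$ in $\widetilde X$.

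I expect this last convergence $[h_{j_s}\circ\gamma_0]\to[h\circ\gamma_0]$ in $\widetilde X$ to be the main obstacle. The difficulty is that the inverse branches $h_{j_s}$ degenerate as $s\to\infty$: the directions coming from repelling eigenvalues of $D_z f$ are crushed towards $z$, while the neutral direction $v$ is only rotated, so one must rule out that the paths $h_{j_s}\circ\gamma_0$ develop extra winding around $PC(f)$ in the limit. The facts that should save the argument are that every $h_j$ fixes $z$ (so all these paths share their initial point), that the limit $h$ stays nondegenerate along the direction $v$ transverse to $PC(f)$ --- so that, by Lemma~\ref{lm_limit of inverse branches}, the limit path does not slip into $PC(f)$ --- and that $\p$ is locally trivial near $h(\gamma_0(1))$, which reduces the claim to the elementary fact that two uniformly close paths issuing from $z$ into $X$ along nearby directions are homotopic relative to $X$. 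With the anchor in hand, assembling the pieces as in the first paragraph finishes the proof that $\{g^{\circ j}\}_j$ is normal.
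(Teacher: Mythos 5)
Your proposal follows essentially the same route as the paper: extract $k_{j_s}\to k$ from normality of $\{k_j\}$, lift $k$ by Lemma~\ref{lm_lifitng case 2}, invoke \cite[Theorem~4]{andreian2003coverings} to reduce the problem to producing one anchor point $[\gamma]\in\widetilde W$ at which $g^{\circ j_s}([\gamma])$ converges, and then use the local representation $g^{\circ j_s}([\gamma])=[h_{j_s}\circ\gamma]$ together with $h_{j_s}\to h$. This is precisely what the paper does; the paper phrases the final computation slightly more compactly by writing $\widetilde k([\gamma]) = [k\circ\widetilde\gamma] = \lim_s [k_{j_s}\circ\widetilde\gamma] = \lim_s [h_{j_s}\circ\gamma] = \lim_s g^{\circ j_s}([\gamma])$.

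Two small remarks. First, you re-invoke Lemma~\ref{lm_limit of inverse branches} to get $h(W\setminus PC(f))\subset X$, but this is already baked into the normality of $\{k_j:\widetilde X\to X\}_j$ and the fact that $h=\lim h_{j_s}$ agrees with $k\circ\sigma$ on $W\setminus PC(f)$; so this is redundant rather than a new ingredient. Second, the point you flag as ``the main obstacle'' --- passing from uniform convergence $h_{j_s}\circ\gamma\to h\circ\gamma$ of paths in $\Xi$ to convergence of the classes $[h_{j_s}\circ\gamma]$ in $\widetilde X$ --- is exactly the point the paper treats tersely. Your sketch of its resolution is correct: all the paths share the basepoint $z$ and, for $t$ small, stay inside a regular neighborhood on which $\p|_{\widetilde W}$ is a biholomorphism, so the germs of the lifts near $z$ are pinned down by endpoint convergence alone, and then one propagates along the remainder of the path using uniform convergence and the covering property of $\p$ over $X$. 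So your proof is correct and matches the paper's; the added discussion of the homotopy-class convergence makes explicit a step the paper compresses.
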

\begin{proof}
	Let $\{g^{\circ j_s}\}_s$ be sequence of iterates of $g$. Extracting subsequences if necessary, we can assume that $\{k_{j_s}\}_s$ converges locally uniformly to a holomorphic map $k: \widetilde{X} \to X$. By Lemma \ref{lm_lifitng case 2}, there exists a holomorphic map $\widetilde{k} : \widetilde{ X } \to\widetilde{X}$ so that $\p \circ \widetilde{k} = k$. We will prove that $\{g^{\circ j_s}\}_s$ converges locally uniformly to $\widetilde{k}$. Applying \cite[Theorem 4]{andreian2003coverings}, it is enough to prove that there exists an element $[\gamma] \in \widetilde{X}$ such that $g^{\circ j_s}([\gamma])$ converges to $\widetilde{k}([\gamma])$.
	
	We consider a regular neighborhood $W$ of $z$ and the family $\{h_j:W \to \C^n\}_j$ of $f^{\circ j}$ fixing $z$ (see \ref{sec_regular neighborhoods}). Let $[\gamma]$ be an element in $\widetilde{W}$\footnote{In Section \ref{sec_step 2 case 1}, we choose an element representing $z$. Such an element does not exist in this case but it is enough to consider an element representing a point near $z$} and an associated path $\widetilde{\gamma}: [0,1] \to \widetilde{X}, \widetilde{\gamma}(t) = [\gamma|_{[0,t]}]$ in $\widetilde{X}$. Since $[\gamma] \in  \widetilde{W}$ , $\widetilde{\gamma} = \sigma \circ \gamma$. Then 
	\[\begin{array}{ccl}
	\widetilde{k}([\gamma]) = [k \circ \widetilde{\gamma}] &=& \lim\limits_{s \to \infty} [k_{j_s} \circ \widetilde{\gamma}]\\
	&=& \lim\limits_{s \to \infty} [h_{j_s} \circ \gamma] =\lim\limits_{s \to \infty} g^{\circ j_s} ([\gamma]) .
	\end{array}
	\]
	Thus we conclude the proof the proposition.
\end{proof}
Following \cite[Corollary 2.1.29]{abate1989iteration}, the normality of $\{g^{\circ j}\}_j$ implies that
\begin{itemize}
	\item there exists a subsequence $\{g^{\circ j_k}\}_k$ converging to a holomorphic retraction \[\rho: \widetilde{X} \to \widetilde{X} \]of $\widetilde{X}$, i.e. $\rho \circ \rho =\rho.$,
	\item by \cite{cartan1986retractions}, the image ${M}=\rho(\widetilde{X})$ is a closed submanifold of $\widetilde{X}$,
	\item by \cite[Corollary 2.1.31]{abate1989iteration}, ${M}$ is invariant by $g$ and $g|_{{M}}$ is an automorphism. 
\end{itemize}   

\subsection{Existence of the center manifold} We will study the dynamics of $g$ restricted on $M$. The difference between the construction of universal covering used in the first case (the fixed point is outside $PC(f)$) and the construction of $\widetilde{X}$ in this case is that $\widetilde{X}$ does not contain a point representing $z$. Hence it is not straight forward that we can relate the dynamics of $g$ on $M$ with the dynamics of $f$ near $z$. 

We consider the objects introduced in Section \ref{sec_regular neighborhoods}. In particular, we consider a regular neighborhood $W$ of $z$ in $\C^n$ and the family $\{h_j: W \to \C^n\}_j$ of inverse branches fixing $z$ of $f^{\circ j}$ on $W$. Recall that 
\[
\sigma \colon W \setminus PC(f) \to \tilde{W}
\]
is the inverse of the biholomorphism $\pi: \tilde{W} \to W \setminus PC(f)$ and that $\lim\limits_{k \to \infty} g^{\circ j_k}= \rho$ is a holomorphic retraction on $\widetilde{X}$. 

Define a holomorphic map $\widetilde{H} : W \setminus PC(f) \to \C^n$ as follows:
\[\widetilde{H} = \p \circ \rho \circ \sigma = \lim\limits_{k \to +\infty} \p \circ g^{\circ j_k} \circ \sigma.
\]
By \eqref{local representation of g}, we have $\widetilde{H}= \lim\limits_{k \rightarrow +\infty} h_{j_k}|_{W \setminus PC(f)}$. Since $\{h_j: W \to \C^n\}_j$ is normal, by passing to subsequences, we can extend $\widetilde{H}$ to a holomorphic map $H: W \to \C^n$ such that $H = \lim\limits_{ k\to +\infty} h_{j_k} \colon W \to \C^n$.

Note that $h_j(z) = z$ for every $j \ge 1$. Then $H(z) = z$. By continuity of $H$, there exists an open neighborhood $U$ of $z$ in $W$ such that $H(U) \subset W$. Note that we choose $U$ to be a regular neighborhood of $z$ (see Remark \ref{rm_regular neighborhood}) and we can shrink $U$ whenever we need to. Recall that for every $[\gamma] \in \widetilde{W}$, we have $g^{\circ j} ([\gamma]) = [h_j \circ \gamma]$. Then for ${[\gamma] \in \widetilde{U} := \sigma(U \setminus PC(f))}$, we have
\[\rho([\gamma]) = \lim\limits_{k\to \infty} g^{\circ j_k}([\gamma]) = \lim\limits_{k \to \infty} [h_{j_k} \circ \gamma]= [H \circ \gamma] \subset \w
\]
In other words, $\rho(\widetilde{U}) \subset \w$. Hence 
\begin{equation}\label{eq_local retraction of z}
{H}(U \setminus PC(f)) = \p \circ \rho \circ \sigma(U \setminus PC(f)) \subset W \setminus PC(f).
\end{equation}
Moreover,  since $\sigma \circ \p|_{\w} = \Id_{\w}$, the composition  \[\widetilde{H} \circ \widetilde{H} = \p \circ \rho \circ \sigma \circ \p \circ \rho \circ \sigma.
\] is well defined on $U \setminus PC(f)$ and equals to $\widetilde{H}|_{U \setminus PC(f)}$. Since $H$ is the extension of $\widetilde{H}$, we deduce that
\[H \circ H (U) = H(U).
\]
\begin{proposition}
	The set $H(U)$ is a submanifold of $W$ containing $z$ whose dimension is the number of neutral eigenvalues of $D_{z} f$ counted with multiplicities. Moreover, $T_{z} H(U) =E_n$ and $D_{z}f|_{T_{z} H(U)}$ is diagonalizable.
\end{proposition}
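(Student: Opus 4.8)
The plan is to recognize $H$ near $z$ as a holomorphic retraction and to read off both $H(U)$ and its tangent space at $z$ from the linear part $D_z H$.

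First I would promote the relation $H\circ H(U)=H(U)$ to the genuine idempotency $H\circ H=H$ on a neighbourhood of $z$. After shrinking $U$ so that $H(U)\subset W$ (possible since $H(z)=z\in W$ and $H$ is continuous), both $H\circ H$ and $H$ are holomorphic on $U$, and they coincide on the dense connected open subset $U\setminus PC(f)$ by the computation preceding the statement; hence they coincide on all of $U$ by the identity principle. Shrinking $U$ once more one may moreover assume $H(U)\subset U$, so that $H|_U$ is a holomorphic retraction of $U$. By the standard structure theorem for holomorphic retractions (\cite{cartan1986retractions}; this is exactly the argument already invoked for $M=\rho(\widetilde X)$), the image $H(U)$ is then a closed complex submanifold of $U$, it contains $z=H(z)$, and $T_z H(U)=\operatorname{Im}(D_z H)$, the range of the linear projection $D_z H$.

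Next I would compute $D_z H$. Since $H=\lim_k h_{j_k}$ locally uniformly and $h_{j_k}$ is the inverse branch of $f^{\circ j_k}$ fixing $z$, one has $D_z H=\lim_k D_z h_{j_k}=\lim_k (D_z f)^{-j_k}$. Using the $D_z f$-invariant splitting $T_z\C^n=E_n\oplus E_r$ of Proposition \ref{prop_properties of affine homogeneous maps}: on $E_r$ all eigenvalues of $D_z f$ have modulus $>1$, so $(D_z f)^{-j_k}|_{E_r}\to 0$; on $E_n$ the map $D_z f|_{E_n}$ is diagonalizable with eigenvalues of modulus $1$, so $L_n:=\lim_k (D_z f)^{-j_k}|_{E_n}$ is again diagonalizable with eigenvalues of modulus $1$, hence invertible. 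Therefore $D_z H=0|_{E_r}\oplus L_n|_{E_n}$ and $\operatorname{Im}(D_z H)=E_n$; consistency with the idempotency of $D_z H$ in fact forces $L_n=\operatorname{Id}_{E_n}$, but this extra fact is not needed.

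Combining the two steps yields $T_z H(U)=E_n$, so $\dim_{\C} H(U)=\dim_{\C} E_n$ equals the number of neutral eigenvalues of $D_z f$ counted with multiplicities, and $D_z f|_{T_z H(U)}=D_z f|_{E_n}$ is diagonalizable by Proposition \ref{prop_properties of affine homogeneous maps}. I expect the only genuinely delicate point to be the first step: checking that the retraction picture extends across $PC(f)$, i.e. that $H$ itself — not merely its restriction $\widetilde H$ to $W\setminus PC(f)$ — is an idempotent holomorphic self-map of a full $\C^n$-neighbourhood of $z$; this is what the identity-principle argument secures, and the remainder is routine linear algebra together with results already at hand.
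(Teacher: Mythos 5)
Your proof is correct and follows essentially the same route as the paper: establish $H\circ H=H$ on a neighbourhood of $z$ (the paper invokes this directly; you fill in the identity-principle step across $PC(f)$), apply Cartan's retraction theorem to obtain the submanifold, and compute $T_zH(U)=\operatorname{Im}(D_zH)=E_n$ from $D_zH=\lim_k(D_zf)^{-j_k}$ via the $E_n\oplus E_r$ splitting of Proposition \ref{prop_properties of affine homogeneous maps}. The paper's proof is just a terser statement of the same argument, referring the reader to \cite{cartan1986retractions} and Corollary \ref{cor_non attracting} for the details you spell out.
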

\begin{proof}
	The first assertion is due to \cite{cartan1986retractions} since $H \circ H = H$ on $U$. The rest are consequences of the fact that $H$ is a limit map of the family $\{h_j: W \to \C^n\}_j$ of inverse branches fixing $z$ of $f$ (see Corollary \ref{cor_non attracting}).
\end{proof}
\begin{lemma}
	$\dim {M} = \dim H(U)$.
\end{lemma}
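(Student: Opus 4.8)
The goal is to show that the two manifolds $M = \rho(\widetilde X)$ and $H(U)$ have the same dimension, by producing a local biholomorphism between a neighborhood of $[z]$-type points in $M$ and a neighborhood of $z$ in $H(U)$. The plan is to exploit the commutation $\p \circ \rho \circ \sigma = \widetilde H$ (extending to $H$ on $U$) that was established just above, together with the fact that $\sigma$ is a section of the covering $\p|_{\widetilde W}$, to realize each manifold as the image of a retraction conjugate to the other.

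First I would pick the regular neighborhood $U$ of $z$ from Step~4 and set $\widetilde U = \sigma(U\setminus PC(f))$, so that $\p|_{\widetilde U}\colon \widetilde U \to U\setminus PC(f)$ is a biholomorphism with inverse $\sigma$. The key identities are $\rho(\widetilde U)\subset \widetilde W$ and $H|_{U\setminus PC(f)} = \p\circ\rho\circ\sigma$, together with $H\circ H = H$ on $U$. I would first argue that $\rho(\widetilde U) = \sigma\bigl(H(U\setminus PC(f))\bigr)$: the inclusion $\subseteq$ is the computation $\rho([\gamma]) = [H\circ\gamma] = \sigma(H(\gamma(1)))$ already displayed in the excerpt, and the reverse inclusion follows since $\rho$ is a retraction, so $\rho(\widetilde U) = \rho(\rho(\widetilde U)) \subseteq \rho(\widetilde W)$ and every point of $\sigma(H(U\setminus PC(f)))$ is $\rho$ of something in $\widetilde U$ by idempotence of $H$. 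Consequently $\p$ restricts to a biholomorphism from the locally closed set $M\cap \widetilde U = \rho(\widetilde U)$ onto $H(U)\setminus PC(f) = H(U\setminus PC(f))$, since $H(U)$ meets $PC(f)$ only in the lower-dimensional branch locus and $H(U\setminus PC(f))$ is open and dense in $H(U)$. This gives $\dim\bigl(M\cap\widetilde U\bigr) = \dim\bigl(H(U)\setminus PC(f)\bigr) = \dim H(U)$.

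It then remains to see that $\dim(M\cap\widetilde U)$ equals $\dim M$; this holds because $M$ is a connected complex submanifold of $\widetilde X$ (being the image of the holomorphic retraction $\rho$ of the connected manifold $\widetilde X$, by the already-cited \cite{cartan1986retractions}), and $\widetilde U$ is open in $\widetilde X$ with $M\cap\widetilde U \neq\emptyset$ — indeed $\widetilde U$ contains points of $M$, for instance $\rho$ of any element of $\widetilde U$. A nonempty open subset of a connected manifold has the same dimension as the manifold, so $\dim M = \dim(M\cap\widetilde U) = \dim H(U)$, which is the assertion.

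I expect the main obstacle to be the bookkeeping at the boundary divisor: $\widetilde X$ is built from $X = \C^n\setminus PC(f)$ and $\widetilde U$ is a chart only over $U\setminus PC(f)$, whereas $H(U)$ is an honest submanifold of $W$ that generically meets $PC(f)$. One must check that removing the analytic subset $H(U)\cap PC(f)$ does not change $\dim H(U)$ (clear, since $H(U)$ is a connected manifold and the subset is proper analytic hence lower-dimensional) and that $\sigma$ really does identify $H(U\setminus PC(f))$ with $\rho(\widetilde U)$ as complex manifolds and not merely as sets — this is where the holomorphy of $\sigma$ and of $\rho$, and the identity $\sigma\circ\p|_{\widetilde W} = \mathrm{Id}_{\widetilde W}$, are used. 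Everything else is formal manipulation of the retractions $\rho$ and $H$ and the covering $\p$.
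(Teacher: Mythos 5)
Your proposal pursues the same underlying idea as the paper's proof (compare $M$ and $H(U)$ through the local biholomorphism $\sigma = (\p|_{\widetilde W})^{-1}$, using $\p\circ\rho\circ\sigma = H$), but the paper expresses it infinitesimally — it computes $\dim M = \rank D_x\rho = \rank_w H = \dim H(U)$ at a single well-chosen point $x = \sigma(w)$ with $w\in \mathrm{Fix}(H)\cap(U\setminus PC(f))$ — while you phrase it set-theoretically via an identification of $\rho(\widetilde U)$ with $\sigma(H(U\setminus PC(f)))$. That identification and its use to get $\dim M \geq \dim H(U)$ are fine.

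There is, however, a small but genuine gap around the claim $M\cap\widetilde U = \rho(\widetilde U)$ and the subsequent nonemptiness argument. One only knows $\rho(\widetilde U)\subset\widetilde W$, not $\rho(\widetilde U)\subset\widetilde U$, so the inclusion $\rho(\widetilde U)\subset M\cap\widetilde U$ is not automatic and the justification ``$\widetilde U$ contains points of $M$, for instance $\rho$ of any element of $\widetilde U$'' does not go through as written. (You can repair this either by first shrinking the regular neighborhood so that $H(U)\subset U$ — possible since $H\circ H=H$, so $U\cap H^{-1}(U)$ is $H$-invariant — or, more directly, by showing $\mathrm{Fix}(H)\cap(U\setminus PC(f))\neq\emptyset$: indeed $\mathrm{Fix}(H)\cap U = H(U)\cap U$ is a nonempty open subset of the manifold $H(U)$ containing $z$, and $H(U)\cap PC(f)$ is a proper analytic subset of $H(U)$ because $H(U\setminus PC(f))\subset X$, so $H(U)\cap U$ is not contained in $PC(f)$.) With the nonemptiness of $M\cap\widetilde U$ in hand, the one-sided inclusion $M\cap\widetilde U\subset\rho(\widetilde U)$ already gives $\dim M\leq\dim H(U)$ and, together with $\rho(\widetilde U)\subset M$, the equality follows; the literal set equality is not needed. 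Apart from this bookkeeping, your argument is a correct reformulation of the paper's rank computation.
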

\begin{proof}
	Since $\widetilde{X}$ is connected, ${M}$ is also a connected complex manifold. Thus $\dim {M} = \rank D_x \rho$ for every $x \in \widetilde{M}$. In particular, if we choose $x =\sigma (z)$ with $w \in U \setminus PC(f)$ then \[\rank D_x \rho =\rank_w H = \dim H(U)\]
	thus $\dim {M} = \dim H(U)$. 
\end{proof}

In other words, \[ M_X : = \p(M) \cup H(U)\] is a submanifold of $\C^n$ in a neighborhood of $z$. Moreover, following Proposition \ref{backward manifold}, we can deduce that $\p(M)$ is a bounded set in $\C^n$. 
\subsection{Semiconjugacy on the center manifold} Denote by $\Lambda= (D_z f|_{E_n})^{-1}$. We will construct a holomorphic $\Phi: M \to E_n$ such that \[\Phi \circ g|_M = \Lambda \circ \Phi.\] The construction follows the idea in Section \ref{sec_semiconjugacy} and the connection established in {Step 3} between $g$ and inverse branches of $f$ at $z$.

\begin{lemma}
	There exists a holomorphic map $\Phi: M \to E_n$ such that $\Phi \circ g = \Lambda \circ \Phi$. 
\end{lemma}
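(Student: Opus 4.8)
The plan is to reproduce the Cesàro‑averaging construction of Proposition~\ref{linearizing}, adapted to the two new features of the present situation: the manifold $M$ contains no point lying over $z$, so both the ``initial map'' and the normalization at $z$ must be read off the local model $H(U)$ produced in Step~4; and the target of $\Phi$ is the fixed subspace $E_n\subset\C^n$, which lets us exploit the affine structure of $\C^n$ exactly as the map $\delta$ did in~\ref{sec_semiconjugacy}.

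First I would fix the linear projection $\pi_{E_n}\colon\C^n=E_n\oplus E_r\to E_n$ parallel to $E_r$, coming from the $D_zf$‑invariant splitting of Proposition~\ref{prop_properties of affine homogeneous maps}.c; since $D_zf$ preserves both summands, $\pi_{E_n}$ commutes with $D_zf$, hence with $\Lambda^{-1}=D_zf|_{E_n}$. I then set
\[
\varphi\colon M\to E_n,\qquad \varphi(m)=\pi_{E_n}\bigl(\p(m)-z\bigr),
\]
which is well defined and bounded since $\p(M)$ is a bounded subset of the affine space $\C^n$ (Proposition~\ref{backward manifold}, as used in Step~4), and form the Cesàro averages
\[
\Phi_N=\frac{1}{N}\sum_{n=0}^{N-1}\Lambda^{-n}\circ\varphi\circ g^{\circ n}\colon M\to E_n .
\]
Since $g|_M$ is an automorphism of $M$, the family $\{\varphi\circ g^{\circ n}\}_n$ is uniformly bounded by $\sup_M\|\varphi\|$; since $\Lambda^{-1}=D_zf|_{E_n}$ is diagonalizable with eigenvalues of modulus $1$ (Proposition~\ref{prop_properties of affine homogeneous maps}.c--d), $\{\Lambda^{-n}\}_n$ is a bounded sequence of linear maps; hence $\{\Phi_N\}_N$ is uniformly bounded, therefore normal. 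The telescoping identity
\[
\Phi_N\circ g=\Lambda\circ\Phi_N+\frac{1}{N}\,\Lambda\bigl(\Lambda^{-N}\circ\varphi\circ g^{\circ N}-\varphi\bigr)
\]
has error term tending to $\textbf{0}$ locally uniformly, so any locally uniform limit $\Phi$ of a subsequence of $\{\Phi_N\}_N$ is a holomorphic map $M\to E_n$ satisfying $\Phi\circ g=\Lambda\circ\Phi$, which is the assertion of the lemma.

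To make this $\Phi$ usable in Step~6 I would also record that near the puncture it is tangent to the identity. For this one passes to a regular neighborhood $W$ of $z$, the section $\sigma\colon W\setminus PC(f)\to\widetilde W$ and the inverse branches $h_n\colon W\to\C^n$ of~\ref{sec_regular neighborhoods}. By~\eqref{local representation of g} one has $\p\circ g^{\circ n}\circ\sigma=h_n|_{W\setminus PC(f)}$, so on $H(U)\setminus PC(f)$ (with $U$ the regular neighborhood of Step~4) the truncated sum $\Phi_N\circ\sigma$ is the restriction of the holomorphic map $\widehat\Phi_N=\frac{1}{N}\sum_{n=0}^{N-1}\Lambda^{-n}\circ\pi_{E_n}\circ(h_n-z)$ defined on all of $W$. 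The family $\{\widehat\Phi_N\}_N$ is again uniformly bounded, so after a further extraction it converges to a holomorphic $\widehat\Phi\colon W\to E_n$ which restricts to $\Phi\circ\sigma$ on $H(U)\setminus PC(f)$. Using $D_zh_n=(D_zf)^{-n}$ and $\pi_{E_n}\circ(D_zf)^{-n}=\Lambda^{n}\circ\pi_{E_n}$ one computes $\widehat\Phi(z)=\textbf{0}$ and $D_z\widehat\Phi=\pi_{E_n}$, whose restriction to $T_zH(U)=E_n$ (Step~4) is $\id_{E_n}$. Since $\p|_M$ identifies a punctured neighborhood of the puncture with $H(U)\setminus PC(f)$, this is exactly the statement that $\p\circ\Phi^{-1}$ extends to $z$ with derivative the inclusion $E_n\hookrightarrow\C^n$.

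I expect the main obstacle to be precisely this last normalization: because no point of $M$ lies over $z$, one cannot simply invoke ``$D_z\Phi=\id$'' as in Proposition~\ref{linearizing}, and the first‑order behaviour of $\Phi$ at the puncture has to be extracted by hand through the section $\sigma$, the explicit branches $h_n$ and the limit submanifold $H(U)$. By contrast, the averaging itself and the verification of normality and of the intertwining relation are routine variants of the arguments already carried out in~\ref{sec_semiconjugacy}.
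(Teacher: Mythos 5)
Your construction is essentially the paper's: you choose the canonical affine $\delta(w)=w-z$ as the tangent-to-identity map, form $\varphi=\pi_{E_n}\circ\delta\circ\p|_M$, take Cesàro averages, and extract a limit; the paper does exactly this, just with an abstract $\delta$. Your additional paragraph on the tangency at the puncture (passing through $\sigma$, the branches $h_n$, and the limit submanifold $H(U)$ to show $D_z\widehat\Phi=\pi_{E_n}$) is not part of the stated lemma but correctly anticipates the very next argument in the paper's Step~5, where $\Phi_1=\Phi\circ\sigma|_{H(U)}$ is shown to satisfy $\Phi_1(z)=\textbf{0}$ and $D_z\Phi_1=\Id$.
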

\begin{proof} We consider a holomorphic map $\varphi: M \to E_n$ constructed as the following composition
	\[M \hookrightarrow \widetilde{X} \xrightarrow{\p} \C^n \xrightarrow{\delta} T_z \C^n \xrightarrow{\p_{E_r}} E_n
	\]
	where $\delta: \C^n \to T_z \C^n$ is a holomorphic map such that $\delta(z) = 0, D_z \delta = \Id$, $\p_{E_r}: T_z \C^n \to E_n$  is a projection on $E_n$ parallel to $E_r$. Note that since $\p(M)$ is bounded, $\varphi(M)$ is also a bounded set in $E_n$.
	
	
	We consider the family \[\Lambda^{-j} \circ \varphi \circ g^{\circ j} : M \to E_n, j \ge 0.\]Since $\Lambda$ is diagonalizable with only neutral eigenvalues, this family is uniformly bounded hence so is the family of its Ces\`aro averages $\{\Phi_N= \frac{1}{N} \sum_{j=0}^{N-1} \Lambda^{-j} \circ \varphi \circ g^{\circ j} \}_N$. Therefore, $\{\Phi_N\}_N$ is normal and every limit map $\Phi$ of $\{\Phi_N\}_N$ satisfies that
	\[\Phi \circ g = \Lambda \circ g.
	\]
	Note that $\Phi(M)$ is also a bounded set in $E_n$.
\end{proof}
Let us fix such a limit map $\Phi =\lim\limits_{k \to \infty} \Phi_{N_k}$. We will prove that $\Phi$ restricted to $\widetilde{U}  \cap M$ is a biholomorphism. In order to do so, we consider the following holomorphic function \[{\Phi_1:= \Phi \circ \sigma|_{H(U)}: H(U \setminus PC(f)) \to \C^n}.\]
Since $H(U \setminus PC(f)) \subset \pi(M)$ is a bounded set in $\C^n$, the map $\Phi_1$ is bounded hence we can extend it to a holomorphic function on $H(U)$. By an abuse of notations, we denote the extension by $\Phi_1$. We will prove that $\Phi_1$ is invertible in a neighborhood of $z$ in $H(U)$.

More precisely, on $H(U) \setminus PC(f)$, we have
\[ 	\begin{array}{ccl}
\Phi_1&=& \lim\limits_{k \to \infty} \frac{1}{N_k} \sum_{j =0}^{N_k -1} \Lambda^{-j} \circ \varphi \circ g^{\circ j} \circ \sigma|_{h(U) \setminus PC(f)}\\
\end{array}\]
Consider the map $\varphi_1:  \p(M) \cup H(U) \to E_n, \varphi_1= \p_{E_r} \circ \delta|_{M_X}$. Then $\varphi_1(z) = 0$, ${ D_z \varphi_1 = \Id}$ and $\varphi=\varphi_1|_{\p(M)} \circ \p.$ It follows that
\[\begin{array}{ccl}

\Phi_1&=& \lim\limits_{k \to \infty} \frac{1}{N_k} \sum_{j =0}^{N_k -1} \Lambda^{-j} \circ \varphi_1|_{\p(M)} \circ \pi \circ g^{\circ j} \circ \sigma|_{H(U) \setminus PC(f)}\\
&=&\lim\limits_{k \to \infty} \frac{1}{N_k} \sum_{j =0}^{N_k -1} \Lambda^{-j}  \circ \varphi_1|_{\p(M)} \circ h_j|_{H(U) \setminus PC(f)}
\end{array}\]

Note that $D_z h_j|_{H(U)} = \Lambda^j$, then we can deduce that $\Phi_1(z) = 0$ and $D_z \Phi_1=\Id$. Therefore, there exists a regular neighborhood $V$ of $z$ in $W$ such that $\Phi_1$ is biholomorphic on $V \cap H(U)$. Consequently, since $\sigma$ is a biholomorphism, the neighborhood $V$ induces an open neighborhood $\widetilde{V}=\sigma(V \setminus PC(f))$ in $\widetilde{W}$ such that $\Phi$ is biholomorphic on $\widetilde{V} \cap \M$. By shrinking $U$, we can assume that $V=U$ hence $\Phi|_{\widetilde{U} \cap M}$ is a biholomorphism.
\begin{figure}[H]
	\centering{
		\fbox{\resizebox{150mm}{!}{\includegraphics[scale=1]{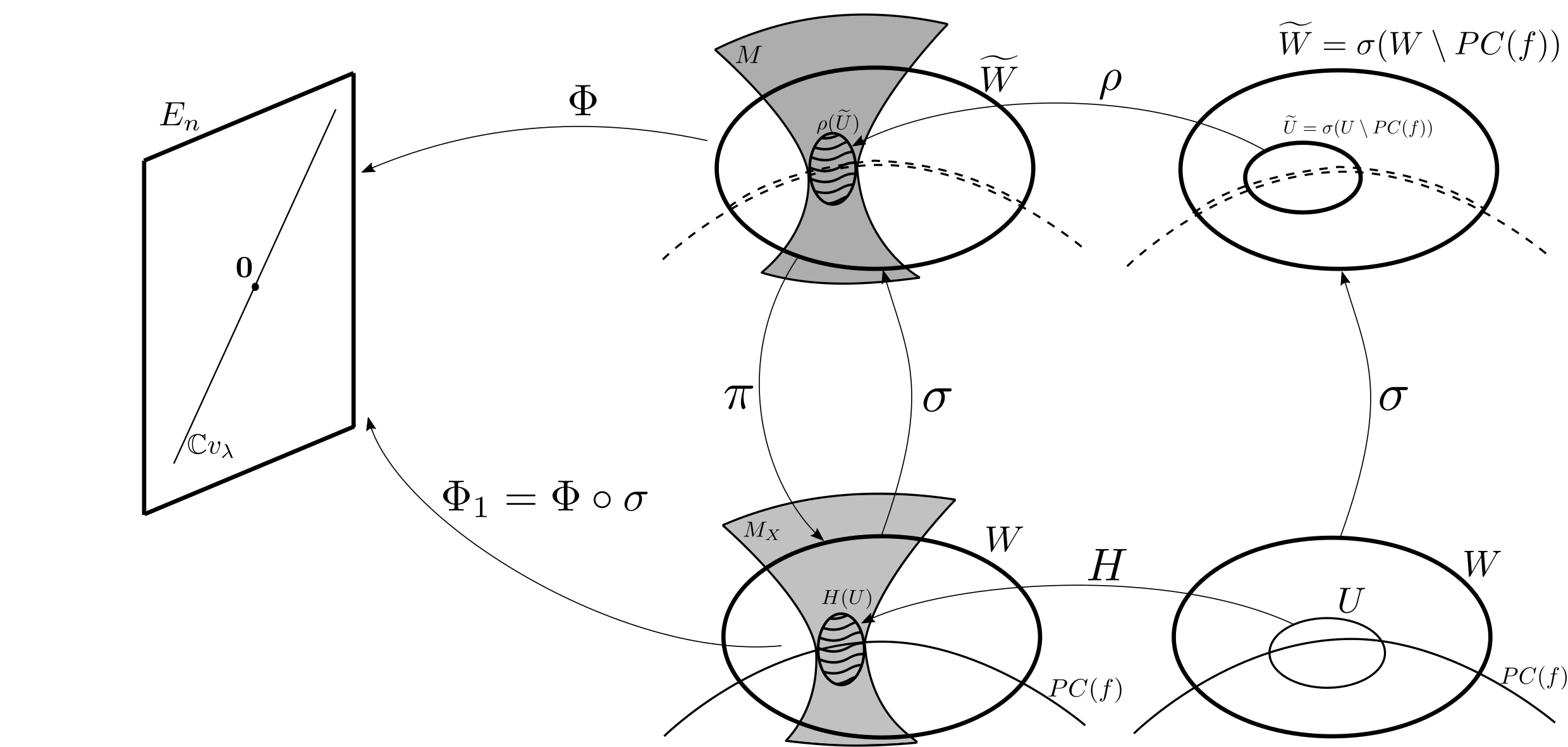}}}
		\caption{Constructions on $M$}
		
	}
\end{figure}

\subsection{Linearization along the neutral direction} The map $\Phi_1$ extends the image of $\Phi$ in the sense that $\Phi(M) \cup \Phi_1(H(U))$ contains a full neighborhood of $0$ in $E_n$. Let $v \in E_n$ be an eigenvector of $D_z f$ associated to $\lambda$. We will study $\Phi^{-1}(\C v)$ by studying $\Phi_1^{-1}(\C v)$.

Denote by $\Gamma_1$ the irreducible component of $\Phi_1^{-1}(\C v)$ containing $z$. Since $D_z \Phi_1 =\Id$, $\Gamma_1$ is a submanifold of dimension one of $H(U)$ near $z$ and $T_z \Gamma_1 =\C v$. Note that $v \notin T_z PC(f)$, then by shrinking $U$ if necessary, we can assume that $\Gamma_1 \cap PC(f) = \{z\}$. In other words, $\Gamma_1 \setminus \{z\}$ is a smooth component of $\Phi_1^{-1}(\C v)$ in $H(U) \setminus PC(f)$. 

Since $\Phi_1 = \Phi \circ \sigma|_{H(U) \setminus PC(f)}$ and $\sigma$ is a biholomorphism, there exists a unique irreducible component $\Gamma$ of $\Phi^{-1}(\C v)$ such that $\Gamma$ contains $\sigma(\Gamma_1 \setminus \{z\})$. Moreover, $\Phi(\Gamma)$ is a punctured neighborhood of $0$ in $\C v$. This means that $0 \notin \Phi(\Gamma)$ but $\Phi(\Gamma) \cup \{0\}$ contains an open neighborhood of $0$ in $\C v$. We will prove that $\Gamma$ is in fact biholomorphic to a punctured disc and $\Phi|_{\Gamma}$ is a biholomorphism conjugating $g|_{\Gamma}$ to the irrational rotation $\zeta \mapsto \lambda \zeta$. 

Following Section \ref{sec_rotation disc}, we consider $\Gamma_0 = \Gamma \setminus C_\Phi$ where $C_\Phi$ the set of critical points of $\Phi$. Then $\Gamma_0$ is a hyperbolic Riemann surface which is invariant by $g$. The map $g$ induces an automorphism $g|_{\Gamma_0}$ on $\Gamma_0$ such that $g|_{\Gamma_0}^{\circ j_k}$ converges to $\rho=\Id_M$ which is identity on $\Gamma_0$. 

On one hand, $\Gamma_0$ contains $\sigma(\Gamma_1 \setminus PC(f))$ hence $\Phi(\Gamma_0)$ is also a punctured neighborhood of $0$ in $\C v$. On another hand, $g$ restricted on $\sigma(\Gamma_1 \setminus PC(f))$ is conjugate to $h$ restricted on $\Gamma_1 \setminus PC(f)$. Note that $h$ fixes $z = \Gamma_1 \cap PC(f)$. Hence we can consider an abstract Riemann surface $\Gamma_0^{\star}=\Gamma_0 \cup \{z\}$ and two holomorphic maps $\iota: \Gamma_0 \to \Gamma_0^\star, \Phi^\star: \Gamma_0^\star \to \C v\subset E_n$ so that $\iota$ is an injective holomorphic map, $\Gamma_0^\star \setminus \iota(\Gamma_0) = \{z\}$, $\Phi^\star(z) = 0$ and the following diagram commutes.
\[\xymatrix{&\Gamma_0^\star \ar[dl]_{\Phi^\star}\\
	E_n & \Gamma_0 \ar[u]_{\iota} \ar[l]^{\Phi}}
\]
Moreover, $\Gamma_0^\star$ admits an automorphism $g^\star$ fixing the point $z$ with multiplier $\lambda$ and extends $g|_{\Gamma_0}$ in the sense that $g^\star \circ \iota = \iota \circ g$. Note that $\Phi^\star(\Gamma_0^\star) = \Phi(\Gamma_0) \cup \{0\}$ is bounded in $E_n$. Then by arguing as in Lemma \ref{lm_gamma 0}, we deduce that. 
\begin{lemma}
	The Riemann surface $\Gamma_0^\star$ is biholomorphic to a disc $\D(0,R), R \in (0,+\infty)$ and ${\Phi^\star: \Gamma_0^\star \to \Phi^\star(\Gamma_0^\star)=\D(0,R), \Phi^\star(z)=0}$ is a biholomorphism conjugating $g^\star$ to the irrational rotation $\zeta \to \lambda \zeta$.
\end{lemma}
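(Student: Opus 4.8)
The plan is to carry out, for the abstract Riemann surface $\Gamma_0^\star$, its automorphism $g^\star$ and the map $\Phi^\star$, exactly the argument already used for Lemma~\ref{lm_gamma 0}. The first step is to show that $\Gamma_0^\star$ is \emph{hyperbolic}. Here I would use the two facts recorded in Step~6: that $\Phi^\star(\Gamma_0^\star)=\Phi(\Gamma_0)\cup\{\0\}$ is bounded in $E_n$, and that $\Phi(\Gamma_0)$ is a punctured neighbourhood of $0$ in $\C v$, so that $\Phi=\Phi^\star\circ\iota$ is non-constant, whence $\Phi^\star$ is non-constant. A Riemann surface carrying a non-constant bounded holomorphic function cannot be $\PP^1$, $\C$, $\C^*$ or a torus (on each of these every bounded holomorphic function is constant — by compactness, by Liouville, by removing the singularity at the puncture, and by compactness respectively), so $\Gamma_0^\star$ is hyperbolic.

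The second step feeds this into the one-dimensional rigidity result. The map $g^\star$ is an automorphism of $\Gamma_0^\star$ fixing $z$, with multiplier $\lambda$ which is neutral and irrational (the irrationality being established exactly as in Corollary~\ref{cor_neutral is irrational}: a root of unity multiplier would force an iterate of $g^\star$, hence an iterate of $f$, to fix a one-dimensional subvariety near $z$ pointwise, contradicting the finiteness of the fixed point set of $f^{\circ q}$, see \cite[Proposition~1.3]{DS08}). Theorem~\ref{thm_hyperbolic riemann  surface} then supplies a biholomorphism $\psi:\Gamma_0^\star\to\D(0,1)$ with $\psi(z)=0$ and $\psi\circ g^\star=\lambda\cdot\psi$.

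The third step upgrades $\psi$ to $\Phi^\star$ by the same linearity trick as in Lemma~\ref{lm_gamma 0}. I would set $\Psi:=\Phi^\star\circ\psi^{-1}:\D(0,1)\to\Phi^\star(\Gamma_0^\star)$. The functional equation $\Phi^\star\circ g^\star=\lambda\cdot\Phi^\star$ — which is just the semiconjugacy $\Phi\circ g=\Lambda\circ\Phi$ of Step~5 restricted to the $\Lambda$-invariant line $\C v$, on which $\Lambda$ acts as multiplication by $\lambda$ — together with $\psi\circ g^\star=\lambda\psi$ yields $\Psi(\lambda\zeta)=\lambda\Psi(\zeta)$ for all $\zeta\in\D(0,1)$. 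Writing $\Psi(\zeta)=\sum_{n\ge 0}a_n\zeta^n$, this forces $a_n(\lambda^n-\lambda)=0$ for every $n$; since $\lambda$ is not a root of unity, $a_n=0$ for $n\ne 1$, and $a_0=\Psi(0)=\Phi^\star(z)=\0$, so $\Psi(\zeta)=a_1\zeta$ with $a_1=\Psi'(0)\ne 0$ (as $\Phi^\star$ is non-constant). Hence $\Psi$ is a linear isomorphism onto $\D(0,R)$ with $R=|a_1|\in(0,+\infty)$, and $\Phi^\star=\Psi\circ\psi=a_1\cdot\psi$ is a biholomorphism $\Gamma_0^\star\to\D(0,R)$ with $\Phi^\star(z)=\0$ conjugating $g^\star$ to $\zeta\mapsto\lambda\zeta$.

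I do not anticipate a genuine obstacle, since the statement is the exact counterpart of Lemma~\ref{lm_gamma 0} and the argument transports essentially verbatim. The two points that need a little care are making sure $\Phi^\star$ really is non-constant (which secures both hyperbolicity and $\Psi'(0)\ne 0$) and checking that the relation $\Phi^\star\circ g^\star=\lambda\cdot\Phi^\star$, clear on $\iota(\Gamma_0)$, extends across the added point $z$; the latter holds by continuity because $\iota(\Gamma_0)$ is dense in $\Gamma_0^\star$.
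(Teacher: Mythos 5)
Your proof is correct and follows exactly the route the paper intends: the paper proves this lemma by a one-line reference ("by arguing as in Lemma \ref{lm_gamma 0}, we deduce that"), and you have transplanted that argument faithfully — hyperbolicity via a non-constant bounded holomorphic function, Theorem \ref{thm_hyperbolic riemann  surface}, then the Fourier-coefficient/linearity trick for $\Psi=\Phi^\star\circ\psi^{-1}$. You add two small clarifications that the paper leaves implicit (that $\lambda$ is irrational, which must be checked separately in this section since Corollary \ref{cor_neutral is irrational} as stated only covers $z\notin PC(f)$, and that the functional equation $\Phi^\star\circ g^\star=\lambda\Phi^\star$ extends across $z$ by density of $\iota(\Gamma_0)$); both are genuine and handled correctly.
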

Consequently, $\Gamma_0$ is biholomorphic to $\D(0,R)$ and $\Phi|_{\Gamma}$ is a biholomorphism. 
\begin{proposition}
	The set $\Gamma$ is smooth and the map\[
	\Phi|_{\Gamma}: \Gamma \to \Phi(\Gamma)=\D(0,R)^* \]
	is a biholomorphism with $R \in (0,+\infty)$.
\end{proposition}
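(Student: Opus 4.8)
The plan is to reduce the statement to the single assertion $\Gamma=\Gamma_0$, in the spirit of the proof of Proposition~\ref{prop_rotation disc}, and then to adapt that argument to the punctured-disc setting. The lemma above already gives that $\Gamma_0$ is smooth, that $\Phi(\Gamma_0)=\D(0,R)^*$ with $R\in(0,+\infty)$, and that $\Phi|_{\Gamma_0}\colon\Gamma_0\to\D(0,R)^*$ is a biholomorphism; so once $\Gamma=\Gamma_0$ is known the proposition follows. Recall also that $\Gamma_0=\Gamma\setminus C_\Phi$ is the complement in $\Gamma$ of the discrete set $\Gamma\cap C_\Phi$, and that $\Gamma_0\subset\Reg\Gamma$, since at a point where $\Phi$ is a submersion $\Phi^{-1}(\C v)$ is smooth of dimension one, hence so is $\Gamma$.

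To prove $\Gamma=\Gamma_0$, I would pass to the normalization. Let $\nu\colon\hat\Gamma\to\Gamma$ be the normalization of $\Gamma$, a smooth connected Riemann surface, and set $\hat\Gamma_0:=\nu^{-1}(\Gamma_0)$. Since $\nu$ restricts to a biholomorphism over $\Reg\Gamma\supset\Gamma_0$, we get $\hat\Gamma_0\cong\Gamma_0\cong\D(0,R)^*$, while $\hat\Gamma\setminus\hat\Gamma_0=\nu^{-1}(\Gamma\cap C_\Phi)$ is discrete in $\hat\Gamma$ because $\nu$ is proper. The key step is then to classify $\hat\Gamma$: I claim it is biholomorphic either to $\D(0,R)^*$, in which case $\hat\Gamma=\hat\Gamma_0$, or to $\D(0,R)$, in which case $\hat\Gamma_0$ is the punctured disc obtained from $\hat\Gamma$ by deleting a single point $\hat p$. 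Indeed, $\hat\Gamma$ is obtained from $\hat\Gamma_0\cong\D(0,R)^*$ by adjoining the discrete set $\hat\Gamma\setminus\hat\Gamma_0$; each adjoined point $q$ has a coordinate disc $N_q\subset\hat\Gamma$ such that $N_q\setminus\{q\}$ is a connected open subset of $\hat\Gamma_0$, biholomorphic to a punctured disc, and whose closure in $\hat\Gamma_0$ is not compact. Because $R<+\infty$, the outer end of $\D(0,R)^*$ has finite conformal modulus and so is not biholomorphic to a punctured neighborhood of a point; hence the only such subset is a punctured neighborhood of $0$, every adjoined point must fill the unique puncture of $\hat\Gamma_0$, and $\hat\Gamma\setminus\hat\Gamma_0$ is empty or a single point. (Alternatively one can copy the universal-cover argument of Proposition~\ref{prop_rotation disc}: since $\p|_\Gamma$ is bounded and non-constant, the universal cover of $\hat\Gamma$ is a disc, and the same discussion applies.)

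If $\hat\Gamma=\hat\Gamma_0$ then $\Gamma=\nu(\hat\Gamma_0)=\Gamma_0$ and we are done. So suppose $\hat\Gamma\cong\D(0,R)$ with $\hat\Gamma\setminus\hat\Gamma_0=\{\hat p\}$, and let me derive a contradiction. The holomorphic map $\Phi\circ\nu\colon\hat\Gamma\to\C v$ restricts on $\hat\Gamma_0$ to $(\Phi|_{\Gamma_0})\circ(\nu|_{\hat\Gamma_0})$, which is a biholomorphism onto $\D(0,R)^*$. A biholomorphism between punctured discs extends holomorphically across the punctures and sends puncture to puncture, so $\Phi\circ\nu(x)\to 0$ as $x\to\hat p$, and by continuity $\Phi\bigl(\nu(\hat p)\bigr)=0$. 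Putting $p=\nu(\hat p)\in\Gamma$, this means $\Phi(p)=0\in\Phi(\Gamma)$, contradicting the fact that $\Phi(\Gamma)$ is a punctured neighborhood of $0$, so that $0\notin\Phi(\Gamma)$. Hence $\hat\Gamma=\hat\Gamma_0$, i.e. $\Gamma=\Gamma_0$; in particular $\Gamma$ is smooth and $\Phi|_\Gamma=\Phi|_{\Gamma_0}\colon\Gamma\to\D(0,R)^*$ is a biholomorphism.

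The main obstacle is the classification of $\hat\Gamma$ in the second paragraph: one must be sure that no point beyond the single puncture of $\D(0,R)^*$ can be adjoined, and the boundedness of $\p|_\Gamma$ — equivalently the finiteness of $R$ — is exactly what prevents the outer end from being capped. This is where the argument genuinely departs from the disc case of Proposition~\ref{prop_rotation disc}, in which $\Gamma_0$ had no puncture to fill.
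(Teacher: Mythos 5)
Your proof is correct but takes a genuinely different route from the paper's. The paper avoids dealing directly with the punctured disc $\Gamma_0\cong\D(0,R)^*$: it adjoins the fixed point $z$ to form the abstract one-dimensional analytic space $\Gamma^\star=\Gamma\cup\{z\}$, so that $\Gamma_0^\star\cong\D(0,R)$ is \emph{simply connected}, and then reruns the universal-cover argument of Proposition~\ref{prop_rotation disc} verbatim (the function $\p$ extends to $\Gamma^\star$ by $\p(z)=z$ as a bounded non-constant map, which rules out the rational cases). You instead normalize $\Gamma$ itself and confront the non-simply-connected $\hat\Gamma_0$ head on. This is precisely why your parenthetical ``alternative'' does not work as stated: since $\hat\Gamma_0$ is not simply connected, knowing that the universal cover of $\hat\Gamma$ is a disc does not let you conclude $\hat\Gamma=\hat\Gamma_0$ by the argument of Proposition~\ref{prop_rotation disc}, whose key step is that the preimage of $\hat\Gamma_0$ in the universal cover is simply connected. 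Your main route is sound, but the end-classification via conformal modulus is the most delicate step and is in fact unnecessary: the map $\Phi\circ\nu$ is already defined on all of $\hat\Gamma$ (since $\nu(\hat\Gamma)=\Gamma\subset M$), it is open, and it restricts to a biholomorphism $\hat\Gamma_0\to\D(0,R)^*$. So for any $q\in\hat\Gamma\setminus\hat\Gamma_0$ one has $\Phi\circ\nu(q)\in\D(0,R)$; if $\Phi\circ\nu(q)=w\neq 0$, then $w$ would be attained both on a small punctured neighborhood of $q$ (which lies in $\hat\Gamma_0$) and at the unique point of $\hat\Gamma_0$ mapping to $w$, contradicting injectivity on $\hat\Gamma_0$. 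Hence $\Phi(\nu(q))=0\in\Phi(\Gamma)$, contradicting $0\notin\Phi(\Gamma)$, and $\hat\Gamma=\hat\Gamma_0$ follows with no reference to moduli of ends. Both routes buy the same conclusion; the paper's is uniform with Proposition~\ref{prop_rotation disc}, whereas yours, once streamlined as above, rests only on openness and injectivity and is arguably the most elementary.
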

\begin{proof}
	It is enough to prove that $\Gamma = \Gamma_0$. The idea is similar to the proof of Proposition \ref{prop_rotation disc}
	
	Note that $\Gamma_0$ is the complement of a discrete set $\Gamma \cap \Sing \Phi^{-1}(\C v)$ in $\Gamma$ and $\Gamma_0$ is biholomorphic to a punctured disc $\D(0,R)^*$. Moreover, $\Phi(\Gamma_0) \subset \Phi(\Gamma)$ is also a punctured neighborhood of $0$ and $\Gamma_0 \subset \Reg \Gamma$ has discrete complement.
	
	Then we can consider an abstract one dimensional analytic space $\Gamma^{\star}= \Gamma \cup \{z\}$ such that $\Gamma_0^\star \subset \Gamma^\star$ and $\Gamma^{\star} \setminus \Gamma_0^{\star}$ is a discrete set containing singular points of $\Gamma^{\star}$ (which is exactly $\Gamma \setminus \Gamma_0$). Then by arguing similarly to \ref{prop_rotation disc}, we can deduce that $\Gamma^{\star}$ is biholomorphic to $\D(0,R)$ hence the proposition is proved.
\end{proof}

\subsection{End of the proof}
Denote by $\tau_1:= \p \circ \left(\Phi|_{\Gamma}\right)^{-1} : \D(0,R)^* \to \C^n$. The map $\tau_1$ has a holomorphic extension to the map $\tau : \D(0,R) \to \C^n$ such that $\tau(0) = z, \tau'(0) =v$. The map $\tau$ takes values in $\p(M)$, which is bounded , hence the radial limit
\[\tau_\theta= \lim\limits_{r \rightarrow R^-} \tau(r e^{i \theta})
\]
exists for almost every $\theta \in [0, 2\pi)$.
\begin{proposition}
	$\tau_\theta \in PC(f)$ if it exists.
\end{proposition}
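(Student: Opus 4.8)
The plan is to argue exactly as in Proposition~\ref{radial limit}, by contradiction, the only new feature being that the relevant paths now start at $z\in PC(f)$, so one must work inside the path space $\Xi$ and a regular neighbourhood $W$ of $z$, rather than inside a covering space carrying a point over $z$.

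Suppose $\tau_\theta$ exists and $\tau_\theta\notin PC(f)$, i.e. $\tau_\theta\in X$. Since $\tau$ has radial limit $\tau_\theta$ at $Re^{i\theta}$, the map $\gamma_R\colon[0,1]\to\C^n$ with $\gamma_R(t)=\tau(tRe^{i\theta})$ for $t<1$ and $\gamma_R(1)=\tau_\theta$ is a continuous path with $\gamma_R(0)=\tau(0)=z$ and $\gamma_R((0,1])\subset X$, because $\tau_1=\p\circ(\Phi|_\Gamma)^{-1}$ takes values in $X$ and $\tau_\theta\in X$. Hence $\gamma_R\in\Xi$ and defines an element $[\gamma_R]\in\widetilde X$; likewise each $\gamma_r(t)=\tau(tre^{i\theta})$, $r\in(0,R)$, lies in $\Xi$. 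Write $\kappa:=(\Phi|_\Gamma)^{-1}$, so $\tau_1=\p\circ\kappa$.

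The key point is that $[\gamma_r]=\kappa(re^{i\theta})$ for every $r\in(0,R)$. For $r$ small this follows from the injectivity of $\p|_{\widetilde W}$: then $\kappa(\D(0,r)^*)$ is a small punctured neighbourhood of the puncture in $\Gamma$, contained in $\sigma(\Gamma_1\setminus\{z\})\subset\widetilde W$, so $\kappa(re^{i\theta})\in\widetilde W$; and $\gamma_r((0,1])=\tau_1((0,r]e^{i\theta})=\p\circ\kappa((0,r]e^{i\theta})\subset\Gamma_1\setminus\{z\}\subset W$, so $[\gamma_r]\in\widetilde W$ too, while $\p([\gamma_r])=\gamma_r(1)=\tau_1(re^{i\theta})=\p(\kappa(re^{i\theta}))$. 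For arbitrary $r$ one then invokes uniqueness of lifts through the covering $\p$: $r\mapsto[\gamma_r]$ and $r\mapsto\kappa(re^{i\theta})$ are continuous lifts of $r\mapsto\tau_1(re^{i\theta})$ that agree for small $r$, hence agree on all of $(0,R)$. Letting $r\to R^-$, the paths $\gamma_r$ converge in $\Xi$ to $\gamma_R$ (up to reparametrisation, $\gamma_R$ is $\gamma_r$ followed by the short radial tail, which lies in $X$), so $\kappa(re^{i\theta})=[\gamma_r]\to[\gamma_R]$ in $\widetilde X$.

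Since $M$ is a closed submanifold of $\widetilde X$ and $\Gamma$ is a closed analytic subset of $M$, the limit $[\gamma_R]$ belongs to $\Gamma$. But $\Phi|_\Gamma\colon\Gamma\to\D(0,R)^*$ is a biholomorphism, so $\Phi([\gamma_R])\in\D(0,R)^*$; on the other hand, by continuity of $\Phi$ on $M$, $\Phi([\gamma_R])=\lim_{r\to R^-}\Phi([\gamma_r])=\lim_{r\to R^-}re^{i\theta}=Re^{i\theta}\notin\D(0,R)^*$, a contradiction. Hence $\tau_\theta\in PC(f)$. I expect the main obstacle to be precisely the identification $[\gamma_r]=\kappa(re^{i\theta})$ together with the convergence $[\gamma_r]\to[\gamma_R]$ in the path topology of $\widetilde X$: because $\widetilde X$ carries no point over $z$, one cannot simply speak of "the lift starting at $[z]$", and the identification has to be anchored near the puncture inside $\widetilde W$ and then propagated by the uniqueness of lifts.
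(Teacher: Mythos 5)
Your proof is correct and follows the same route the paper intends with its brief ``by arguing similarly to Proposition \ref{radial limit}.'' You have, moreover, correctly identified and carefully filled in the one genuinely new point that the paper leaves implicit: since $\widetilde X$ carries no point over $z$, the identification $[\gamma_r]=\kappa(re^{i\theta})$ cannot be anchored at a lift of $z$ as in the first case, and must instead be established for small $r$ via the injectivity of $\p|_{\widetilde W}$ on the regular neighbourhood and then propagated by uniqueness of lifts.
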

\begin{proof}
	Note that $\tau(\D(0,R)) \cap PC(f)= \left(\pi(\Gamma) \cup \Gamma_1\right) \cap PC(f) =\{z\}$. Hence $\tau(\D(0,R)\setminus \{0\}) \subset {M}$. It implies that $\tau(\D(0,R)\setminus \{0\}) \subset X$. Then by arguing similarly to Proposition \ref{radial limit}, we deduce that the radial limit $\tau_\theta \in PC(f)$ if this limit exists.
\end{proof} 

Recall that $Q$ is the defining polynomial of $PC(f)$ then $Q \circ \tau$ has vanishing radial limit for almost every $\theta \in [0,2 \pi)$. This means that $Q \circ \tau$ is identically $0$. Hence $\tau(\D(0,1)) \subset PC(f)$. This is a contradiction since $\tau'(0)= v \notin T_{z} PC(f)$. The proof the Proposition \ref{prop_transversal eigenvalue affine} is complete.
\section{Periodic cycles of post-critically algebraic endomorphisms of $\PP^2$}\label{sect_in singular part}
This section is devoted to the proof of Theorem \ref{PCA P2}.
\PCA*
If the periodic cycle is not in $PC(f)$, then $\lambda \neq 0$ and the result follows from Theorem \ref{first case}. Therefore, without loss of generality, we may assume that $z$ is a fixed point of $f$ in $PC(f)$. Note that if $\lambda$ is an eigenvalue of $f$ at a fixed point $z$, then $\lambda^j$ is an eigenvalue of $f^{\circ j}$ at the fixed point $z$. If we can prove that $\lambda^j$ is either superattracting or repelling, so is $\lambda$. Thus, in order to prove Theorem \ref{PCA P2}, we can always consider $f$ up to some iterates if necessary. 

After passing to an iterate, we may assume that the fixed point belongs to an invariant irreducible component $\Gamma$ of $PC(f)$. The reason why we have to restrict to dimension $n=2$ is that, in this case, $\Gamma$ is an algebraic curve. There is a normalization $\n:\hat \Gamma\to \Gamma$ where $\hat\Gamma$ is a smooth compact Riemann surface and $\n$ is a biholomorphism outside a finite set (see \cite{igor1994shafarevich}, \cite{gunning1990introduction} or \cite{chirka2012complex}). And there is a holomorphic endomorphism $\hat f:\hat\Gamma\to \hat\Gamma$ such that $\n\circ \hat f  = f\circ \n$. 

In section \ref{sec_invariant curve}, we analyse the dynamics of $\hat f:\hat \Gamma\to \hat{\Gamma}$ and in particular, we show that when $f$ is post-critically algebraic, then $\hat f$ is post-critically finite. In Section \ref{sec_fixed regular point}, we complete the proof in the case where the fixed point belongs to the regular part of $PC(f)$ and in Section \ref{sec_fixed singular point}, we complete the proof in the case where the fixed point belongs to the singular part of $PC(f)$. 
\subsection{Dynamics on an invariant curve}\label{sec_invariant curve} Assume that $f:\PP^2\to \PP^2$ is an endomorphism of degree $d\geq 2$ (not necessarily post-critically algebraic) and $\Gamma\subset \PP^2$ is an irreducible algebraic curve such that $f(\Gamma) = \Gamma$. Let $\n:\hat\Gamma\to \Gamma$ be a normalization of $\Gamma$ and $\hat f:\hat \Gamma\to \hat \Gamma$ be an endomorphism such that $\n\circ \hat  f = f\circ \n $. 

According to \cite[Theorem 7.4]{fornaess1994complex}, the endomorphism $\hat f : \hat\Gamma\to \hat{\Gamma}$ has degree $d \geq 2$. It follows from the Riemann-Hurwitz Formula that the compact Riemann surface $\hat \Gamma$ has genus $0$ or $1$. In addition, if the genus is $1$, then $\hat f$ has no critical point and all fixed points of $\hat f$ are repelling with common repelling eigenvalue $\lambda$ satisfying $|\lambda|= \sqrt{d'}$. If the genus is $0$, then the following lemma implies that the postcritical set of $\hat f$ and $f$ are closely related. 

\begin{lemma}
	Denote by $V_{\hat{f}}$ and $V_f$ the set of critical values of $\hat{f}$ and $f$ respectively. Then
	\[
	V_{\hat{f}}\subset \left\{ \begin{array}{lcr}
	\n^{-1}(V_f) \, &\mbox{if }& \, \Gamma \not\subset V_f\\
	\n^{-1}(\Sing V_f) \, &\mbox{if }& \, \Gamma\subset V_f
	\end{array}\right..
	\]
\end{lemma}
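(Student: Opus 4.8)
The plan is to argue by contradiction at the level of the normalization, using two facts: first, that $\n$ is injective on a sufficiently small neighbourhood of every point of $\hat\Gamma$ (it is biholomorphic over $\Reg\Gamma$ and $\n^{-1}(\Sing\Gamma)$ is a finite set); and second, the local description of $f$ as a finite branched covering ramifying over $V_f$. So fix a critical point $\hat p$ of $\hat f$, write $\hat q=\hat f(\hat p)$, $p=\n(\hat p)$, $q=\n(\hat q)=f(p)$, and let $k\ge 2$ be the local degree of $\hat f$ at $\hat p$.

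First I would treat the case $\Gamma\not\subset V_f$, assuming towards a contradiction that $q\notin V_f$. Since $f$ ramifies only over $V_f$ and $q\notin V_f$, the map $f$ is a local biholomorphism at $p$; let $\psi$ be a local inverse defined near $q$. Choosing coordinates $w$ near $\hat p$ and $z$ near $\hat q$ in which $\hat f$ reads $w\mapsto w^k$, and letting $\gamma=\n|_{D_{\hat p}}$ and $\delta=\n|_{D_{\hat q}}$ be the resulting parametrizations of the branches of $\Gamma$ at $p$ and at $q$, the relation $f\circ\n=\n\circ\hat f$ becomes $f(\gamma(w))=\delta(w^k)$, hence $\gamma(w)=\psi(\delta(w^k))$. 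This forces $\gamma(\zeta w)=\gamma(w)$ for every $k$-th root of unity $\zeta$, contradicting the injectivity of $\gamma$; therefore $q\in V_f$, i.e. $\hat q\in\n^{-1}(V_f)$.

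Next I would treat $\Gamma\subset V_f$, assuming towards a contradiction that $q\notin\Sing V_f$, so $q\in\Reg V_f$. Being an irreducible curve contained in the curve $V_f$, $\Gamma$ is an irreducible component of $V_f$; since $V_f$ is smooth at $q\in\Gamma$, its germ there is irreducible and equals the germ of $\Gamma$, so $q\in\Reg\Gamma$ and $\n$ is a local biholomorphism near $\hat q$. If $p\notin C_f$, the argument above applies verbatim. If $p\in C_f$, I would invoke Grauert's local normal form (the Proposition preceding Proposition~\ref{prop_locally biholomorphism in tangent direction}) for $f$ near $q$ and $p$: the branch locus $B_f$ near $p$ is then smooth and $f|_{B_f}\colon B_f\to V_f$ is a biholomorphism onto the germ of $V_f$ at $q$. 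From $f(\Gamma)=\Gamma\subset V_f$ one gets $\Gamma\subset f^{-1}(V_f)$, and near $p$ one has $f^{-1}(V_f)=B_f$; since $\Gamma$ is purely one-dimensional and $B_f$ is a connected smooth curve germ, $\Gamma$ coincides with $B_f$ near $p$. Hence $p\in\Reg\Gamma$ and $f$ restricts near $p$ to the biholomorphism $f|_{B_f}$ from $\Gamma$ near $p$ onto $\Gamma$ near $q$; conjugating this by the local biholomorphisms $\n$ near $\hat p$ and near $\hat q$ shows $\hat f$ is a local biholomorphism at $\hat p$, contradicting $k\ge 2$. Therefore $q\in\Sing V_f$, i.e. $\hat q\in\n^{-1}(\Sing V_f)$.

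I expect the main obstacle to be precisely this last case — $\Gamma\subset V_f$ with $p$ a critical point of $f$ — where the ``pull back a local inverse'' trick is unavailable and one must instead use Grauert's theorem to identify $\Gamma$ locally with the smooth branch locus $B_f$ and deduce that $f|_\Gamma$ is a local biholomorphism near $p$. The remaining inputs (that a branch of $\Gamma$ at a point corresponds to a point of the fibre of $\n$, and that $\n$ is injective near each point of $\hat\Gamma$) are elementary consequences of the definition of the normalization.
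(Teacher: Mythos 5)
Your proof is correct and follows essentially the same route as the paper's: both arguments reduce to showing that when the image point avoids $V_f$ (resp.\ $\Sing V_f$), the map $\hat f$ must be locally injective at the preimage, using the local injectivity of $\n$ together with either the absence of ramification (case $q\notin V_f$) or Grauert's normal form for the branched covering along the smooth part of $V_f$ (case $\Gamma\subset V_f$, which the paper encapsulates in Proposition~\ref{prop_locally biholomorphism in tangent direction}). The only cosmetic difference is that you argue by contradiction and unwind the $w\mapsto w^k$ local model explicitly, whereas the paper states the contrapositive directly via the characterization of non-critical values as points over which $\hat f$ is locally injective on every fiber.
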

\begin{proof} The set of critical values of $\hat{f}$ is characterized by the following property: \textit{$x \notin V_{\f}$ if and only if for every $y \in \f^{-1}(x)$, $\f$ is injective near $y$.} Note that $\n: \hat{\Gamma} \rightarrow \Gamma$ induces a parametrization of the germ $(\Gamma,x)$ such that for every $x \in \Gamma$ and for every $y \in \n^{-1}(x)$, $\n$ is injective near $y$ (see also 2.3, \cite{wall2004singular}).
	\begin{itemize} 
		\item If $\Gamma \not\subset V_f$, let $x \notin \n^{-1}(V_f)$ and let $y \in \f^{-1}(x)$. Then $\n(y) \in f^{-1}( \n(x))$. Since $ \n(x) \notin V_f$ then $f$ is injective near $\n(y)$. Combining with the fact that $\n$ is locally injective, we deduce that $\f$ is injective near $y$. Thus $x \notin V_{\f}$.
		\item If $\Gamma \subset V_f$, let $x \notin \n^{-1}(\Sing V_f)$ and let $y \in \f^{-1}(x)$. Then $\n(y) \in f^{-1}(\n(x))$ and ${\n(x) \in \Reg V_f}$. By Proposition \ref{prop_locally biholomorphism in tangent direction}, we can deduce that $\n(y) \in f^{-1}(\Reg V_f)$ and $f|_{f^{-1}(\Reg V_f)}$ is locally injective. It implies that $\f$ is also injective near $y$. Hence $x \notin V_{\f}$.
	\end{itemize}
	Thus we obtain the conclusion of the lemma.
\end{proof}
\begin{proposition}\label{pcf}
	If $\hat{\Gamma}$ has genus $0$ and $f$ is a post-critically algebraic endomorphism then $\f$ is a PCF endomorphism.
\end{proposition}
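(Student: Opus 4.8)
The plan is to prove directly that $PC(\hat f)=\bigcup_{j\ge 1}\hat f^{\circ j}(C_{\hat f})\subset\hat\Gamma$ is finite; since $\hat\Gamma$ is a compact Riemann surface of genus $0$, this is exactly the assertion that $\hat f$ is PCF. First I would make a harmless reduction to an iterate. For every $k\ge 1$ one has $\widehat{f^{\circ k}}=\hat f^{\circ k}$ (the normalization commutes with iteration, by its uniqueness), $PC(f^{\circ k})=PC(f)$ and $PC(\hat f^{\circ k})=PC(\hat f)$, so $\hat f^{\circ k}$ is PCF if and only if $\hat f$ is; moreover $V_{f^{\circ k}}=f^{\circ k}(C_{f^{\circ k}})=\bigcup_{m=1}^{k}f^{\circ m}(C_f)$ by the chain rule and surjectivity of $f$. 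Since $PC(f)=\bigcup_{m\ge 1}f^{\circ m}(C_f)$ is algebraic with finitely many irreducible components, and each such component is contained in $f^{\circ m}(C_f)$ for some $m$ (an irreducible variety over $\C$ is not a countable union of proper closed subsets), this increasing union stabilizes; hence $V_{f^{\circ k}}=PC(f)$ for $k$ large, and replacing $f$ by such an iterate we may assume $V_f=PC(f)$.

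Next, writing $V_{\hat f}=\hat f(C_{\hat f})$ for the set of critical values of $\hat f$ as in the Lemma, we have $PC(\hat f)=\bigcup_{j\ge 0}\hat f^{\circ j}(V_{\hat f})$, and from $\n\circ\hat f^{\circ j}=f^{\circ j}\circ\n$ we get $\n\bigl(PC(\hat f)\bigr)=\bigcup_{j\ge 0}f^{\circ j}\bigl(\n(V_{\hat f})\bigr)$. As $\n$ has finite fibres, it suffices to prove this set is finite, and here the preceding Lemma is applied in two cases, the recurring point being that the forward $f$-orbit of the finite set $\n(V_{\hat f})$ stays inside a fixed finite set, thanks to the forward-invariance $f(PC(f))\subset PC(f)$ together with $f(\Gamma)=\Gamma$. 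If $\Gamma\not\subset PC(f)=V_f$, the Lemma gives $V_{\hat f}\subset\n^{-1}(V_f)$, so $\n(V_{\hat f})\subset V_f\cap\Gamma$, which is finite because $\Gamma$ is irreducible and not contained in $V_f$; then $f^{\circ j}\bigl(\n(V_{\hat f})\bigr)\subset f^{\circ j}(PC(f))\cap\Gamma\subset PC(f)\cap\Gamma$ for all $j$, a finite set, so $\n\bigl(PC(\hat f)\bigr)$, and hence $PC(\hat f)$, is finite.

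The remaining case is $\Gamma\subset PC(f)=V_f$, where the Lemma instead yields $V_{\hat f}\subset\n^{-1}(\Sing V_f)$, so $\n(V_{\hat f})\subset\Sing PC(f)$, which is finite (the singular locus of a curve in $\PP^2$). To finish I would show $f(\Sing PC(f))\subset\Sing PC(f)$: for $p\in\Sing PC(f)$ we have $f(p)\in PC(f)$ by forward-invariance, and $f(p)\notin\Reg PC(f)$, since otherwise $p\in f^{-1}(\Reg PC(f))\subset\Reg PC(f)$ by Proposition \ref{prop_locally biholomorphism in tangent direction}, contradicting $p\in\Sing PC(f)$; hence $f(p)\in\Sing PC(f)$. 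Therefore $f^{\circ j}\bigl(\n(V_{\hat f})\bigr)\subset\Sing PC(f)$ for every $j$, so $\n\bigl(PC(\hat f)\bigr)\subset\Sing PC(f)\cap\Gamma$ is finite, and again $PC(\hat f)$ is finite. In both cases $PC(\hat f)$ is finite, i.e. $\hat f$ is PCF.

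The step I expect to be the main obstacle is the one just described: the Lemma only controls the critical values $V_{\hat f}$, whereas being PCF requires the entire forward orbit of the critical points to be finite; bridging this gap in the case $\Gamma\subset PC(f)$ genuinely requires both the reduction to $V_f=PC(f)$ and the invariance of $\Sing PC(f)$ furnished by Proposition \ref{prop_locally biholomorphism in tangent direction}. One should also keep track of the fact that $\n$ is only a finite map, not a biholomorphism, which is enough here since preimages of finite sets under $\n$ are finite.
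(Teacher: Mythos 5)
Your argument is correct, but it travels a genuinely different route from the paper's. The paper's proof of Proposition \ref{pcf} opens with the identity $PC(\hat f)=\bigcup_{j\ge 1}V_{\hat f^{\circ j}}$ (the postcritical set written as the increasing union of the critical \emph{value} loci of the iterates, rather than as a forward orbit), and then simply applies the Lemma to the pair $(f^{\circ j},\hat f^{\circ j})$ for every $j$, using $\n\circ\hat f^{\circ j}=f^{\circ j}\circ\n$, before taking the union. With that rewriting, the obstacle you flagged at the end --- that the Lemma only controls $V_{\hat f}$ while PCF is about the whole forward orbit --- never arises, because no orbit has to be tracked: every $V_{\hat f^{\circ j}}$ is separately bounded inside $\n^{-1}(PC(f))$ (or $\n^{-1}(\Sing PC(f))$). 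Your proof instead keeps the orbit description $PC(\hat f)=\bigcup_{j\ge 0}\hat f^{\circ j}(V_{\hat f})$, applies the Lemma only once, and closes the gap by two auxiliary steps: (i) passing to an iterate so that $V_f=PC(f)$, via the stabilization $V_{f^{\circ k}}=\bigcup_{m=1}^k f^{\circ m}(C_f)\nearrow PC(f)$, which is legitimate and in fact necessary for the case $\Gamma\subset PC(f)$ (otherwise $\Gamma$ might not lie in $V_f$ and the Lemma's two cases do not cleanly separate); and (ii) proving the forward-invariance $f(\Sing PC(f))\subset\Sing PC(f)$ from Proposition \ref{prop_locally biholomorphism in tangent direction}, so that the $f$-orbit of $\n(V_{\hat f})$ is trapped in a fixed finite set. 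Both approaches are valid; the paper's is shorter because the identity $PC(\hat f)=\bigcup_j V_{\hat f^{\circ j}}$ absorbs the dynamics into repeated applications of the Lemma, while yours is more explicitly dynamical and isolates the (true, and independently useful) invariance of $\Sing PC(f)$, at the cost of the stabilization step and that extra lemma.
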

\begin{proof}
	We have that
	\[PC(f) = \bigcup_{j\geq 1} V_{f^{\circ j}}\quad \text{and}\quad PC(\hat f) = \bigcup_{j\geq 1} V_{\hat f^{\circ j}}.\]
	Since $\n\circ \hat f^{\circ j} = f^{\circ j}\circ \n$ for all $j\geq 1$, applying the previous lemma to $f^{\circ j}$ and $\hat f^{\circ j}$ yields
	\[PC(\hat f)\subset \begin{cases}
	\n^{-1}\bigl(PC(f)\bigr)&\text{if } \Gamma\not\subset PC(f)\\
	\n^{-1}\bigl({\rm Sing}\, PC(f)\bigr)&\text{if } \Gamma\subset PC(f).
	\end{cases}\]
	In both cases, $PC(\hat f)$ is contained in the preimage by $\n$ of a proper algebraic subset of $\Gamma$, which therefore is finite. Since $\n$ is proper, $PC(\hat f)$ is finite and so, $\hat f$ is PCF. 
	
\end{proof}
Assume that $f$ has a fixed point $z$ which is a regular point of $\Gamma$ (which is not necessarily an irreducible component of $PC(f)$). Since $\n$ is a biholomorphism outside the preimage of singular points of $\Gamma$, the point $\n^{-1}(z)$ is a fixed point of $\hat{f}$ and $\n$ will conjugate $D_zf|_{T_z \Gamma}$ and $D_{\n^{-1}(z)} \hat{f}$. Denote by $\lambda$ the eigenvalue of $D_{z}f|_{T_z \Gamma}$. Then $\lambda$ is also the eigenvalue of $D_{\n^{-1}(z)}\hat{f}$. The previous discussion allows us to conclude that either $\lambda=0$ or $|\lambda|>1$. Thus we can deduce the following result.
\begin{lemma}\label{repel on normalisation}Let $f$ be a post-critically algebraic endomorphism of $\PP^2$ of degree $d\geq 2$, let $\Gamma\subset \PP^2$ be an invariant irreducible algebraic curve, let $z\in {\Reg} \Gamma$ be a fixed point of $f$ and let $\lambda$ be the eigenvalue of $D_zf|_{T_z\Gamma}$. Then, either $\lambda=0$ or $|\lambda|>1$.  
\end{lemma}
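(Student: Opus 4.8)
The plan is to push the statement down to the normalization and then invoke the one-dimensional theory. Since $\Gamma$ is invariant we have the semiconjugacy $\n\circ\hat f = f\circ\n$ with $\hat f:\hat\Gamma\to\hat\Gamma$, and recall that $\deg\hat f = d'\geq 2$ by \cite[Theorem 7.4]{fornaess1994complex}, so the Riemann--Hurwitz formula forces $\hat\Gamma$ to have genus $0$ or $1$.

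First I would exploit the hypothesis $z\in\Reg\Gamma$. Over a smooth point of $\Gamma$ the normalization is a bijection, so $\n^{-1}(z)$ consists of a single point $\hat z$ and $\n$ restricts to a biholomorphism from a neighbourhood of $\hat z$ onto a neighbourhood of $z$ in $\Gamma$. Hence $\hat z$ is a fixed point of $\hat f$, and $D_{\hat z}\n$ conjugates $D_{\hat z}\hat f$ to $D_zf|_{T_z\Gamma}$; in particular the given $\lambda$ is exactly the multiplier of $\hat f$ at $\hat z$. So it suffices to prove that the multiplier of $\hat f$ at $\hat z$ is either $0$ or of modulus strictly bigger than $1$.

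Now I would split according to the genus. If $\hat\Gamma$ has genus $1$, Riemann--Hurwitz shows $\hat f$ is unramified, and as recalled above every fixed point of $\hat f$ is then repelling with multiplier of modulus $\sqrt{d'}>1$, so $|\lambda|>1$. If $\hat\Gamma$ has genus $0$, then $\hat\Gamma\cong\PP^1$ and Proposition \ref{pcf} gives that $\hat f$ is a post-critically finite endomorphism of $\PP^1$ of degree $d'\geq 2$; applying Theorem \ref{dim one} to $\hat f$ at the fixed point $\hat z$ yields $\lambda=0$ or $|\lambda|>1$. In both cases the lemma follows.

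I do not expect a genuine obstacle here, since the substantial work has already been carried out in Proposition \ref{pcf} and Theorem \ref{dim one}. The only points requiring a little care are checking that regularity of $z$ on $\Gamma$ really makes $\n$ a local biholomorphism at $\hat z$, so that the two multipliers coincide exactly rather than merely up to the local structure of the singular point, and remembering to treat the genus-$1$ case by the separate argument above, since Theorem \ref{dim one} does not apply to it.
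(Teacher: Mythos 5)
Your proof is correct and follows essentially the same route as the paper: pass to the normalization, use regularity of $z$ to identify $\lambda$ with the multiplier of $\hat f$ at the unique preimage $\hat z$, then split into the genus-$1$ case (unramified, so repelling with $|\lambda|=\sqrt{d'}$) and the genus-$0$ case (Proposition \ref{pcf} plus Theorem \ref{dim one}). This is exactly the argument given in the discussion preceding the lemma in the paper.
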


\subsection{Periodic cycles in the regular locus of the post-critical set}\label{sec_fixed regular point}
\begin{proof}[Proof of Theorem \ref{PCA P2} -- first part]
	Let $f$ be a post-critically algebraic endomorphism of $\PP^2$ with a fixed point $z$ which is a regular point of $PC(f)$. Denote by $\Gamma$ the irreducible component of $PC(f)$ containing $z$. Then $\Gamma$ is invariant by $f$. Denote by $\overline{D_{z} f}: T_{z} \PP^2 / T_{z} \Gamma \to T_{z} \PP^2 / T_{z} \Gamma$ the linear endomorphism induced by $D_{z} f$. Note that \[
	\ssp(D_{z} f) = \ssp(D_{z}f|_{T_{z} \Gamma})  \cup \ssp(\overline{ D_{z} f})\]
	By Proposition \ref{prop_locally biholomorphism in tangent direction}, the eigenvalue of $D_{z} f|_{T_{z} \Gamma}$ is not $0$ hence repelling by Lemma \ref{repel on normalisation}. By Proposition \ref{prop_transversal eigenvalue}, the eigenvalue of $\overline{D_{z} f}$ is either superattracting or repelling. Thus Theorem \ref{PCA P2} is proved when the fixed point is a regular point of the post-critical set. 	
\end{proof}
\subsection{Periodic cycles in the singular locus of the post-critical set}\label{sec_fixed singular point}
When the fixed point $z$ is a singular point of $PC(f)$, by passing to some iterates of $f$, we can assume that $f$ induces a holomorphic germ at $z$ which fixes a singular germ of curve at $z$ which is induced by some irreducible components of $PC(f)$. On the one hand, from the local point of view, there exists (in most of cases) a relation between two eigenvalues of $D_{z} f$ as a holomorphic germ fixing a singular germ of curve. On the other hand, from the global point of view, these eigenvalues can be identified with the eigenvalue the germ at a fixed point of the lifts of $f$ via the normalization of $PC(f)$. Then by Proposition \ref{pcf}, we can conclude Theorem \ref{PCA P2}.
\subsubsection{Holomorphic germ of $(\C^2,0)$ fixing a singular germ of curve}
Let $(\Sigma,0)$ be an irreducible germ of curve at $0$ in $(\C^2,0)$ defined by a holomorphic germ $g:(\C^2,0) \to (\C,0)$. In local coordinates $(x,y)$ of $\C^2$, if $g(0,y) \not\equiv 0$, i.e. $g$ does not identically vanish on $\{x=0\}$, it is well known that there exists an injective holomorphic germ $\gamma: (\C,0) \to (\C^2,0)$ of the form
\[
\gamma(t) = (t^m, \alpha t^n + O(t^{n+1}))
\]
parameterizing  $\Sigma$, i.e. $\gamma((\C,0)) = (\Sigma,0)$ (see \cite[Theorem 2.2.6]{wall2004singular}). If $\Sigma$ is singular, after a change of coordinates, $\alpha$ can be $1$ and $m$ and $n$ satisfy that $1< m< n, m\not| n$. The germ $\gamma$ is called a Puiseux parametrization. In fact, if $\Sigma$ is a germ induced by an algebraic curve $\Gamma$ in $\PP^2$, then $\gamma$ coincides with the germ induced by the normalization morphism. When $\Sigma$ is singular, the integers $m$ and $n$ are called the first two Puiseux characteristics of $\Gamma$ and they are invariants of the equisingularity class of $\Gamma$. In particular, $m$ and $n$ do not depend the choice of local coordinates. We refer to \cite{wall2004singular} for further discussion about Puiseux characteristics. We refer also to \cite{zariski2006probleme} and references therein for discussion about equisingular invariants. 

Now, consider a proper\footnote{Proper germ means that $g^{-1}(0) = 0$. In particular, an endomorphism of $\PP^2$ induces a proper germ at its fixed points. } holomorphic germ $g: (\C^2, 0) \to (\C^2,0)$ and a singular germ of curve $(\Sigma,0)$. If $\Sigma$ is invariant by $g$, i.e. $g(\Sigma) = \Sigma$, $g$ acts as a permutation on irreducible branches of $\Sigma$. Then by passing to some iterates of $g$, we assume that there exists an invariant branch. The following propositions show that there exists a relation between two eigenvalues of $D_{0} g$. 

When $g$ has an invariant singular branch, the following result was observed by Jonsson, \cite{jonsson1998some}.
\begin{proposition}\label{invariant cusp}
	Let $\Sigma$ be an irreducible singular germ of curve parametrized by  ${\gamma: (\C,0) \to (\C^2,0)}$ of the form
	\[\gamma(t) = (t^m, t^n + O(t^{n+1})), 1< m <n, m \not| n.\]
	Let $g: (\C^2,0) \to (\C^2,0)$ and $\hat{g}: (\C,0) \to (\C,0), \hat{g}(t) = \lambda t + O(t^2)$ be holomorphic germs such that \[g \circ \gamma = \gamma \circ \hat{g}.\]
	Then the eigenvalues of $D_{0}g$ are $\lambda^m$ and $\lambda^n$.
\end{proposition}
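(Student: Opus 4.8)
The plan is to expand the functional equation $g\circ\gamma=\gamma\circ\hat g$ as a power series in the parameter $t$ and then to compare the coefficients of $t^m$ and of $t^n$ on the two sides. Write $g=(g_1,g_2)$ with each $g_i(x,y)=\sum_{k,l\ge 0}c^{(i)}_{kl}x^ky^l$ a convergent power series vanishing at $\0$, so that $D_{\0}g$ is the $2\times 2$ matrix with rows $(c^{(1)}_{10},\,c^{(1)}_{01})$ and $(c^{(2)}_{10},\,c^{(2)}_{01})$. Since the first coordinate of $\gamma$ is exactly $t^m$ and the second is $t^n+O(t^{n+1})$, a monomial $x^ky^l$ pulls back under $\gamma$ to a power series in $t$ whose lowest-order term has degree $km+ln$; in particular $x^k$ pulls back to exactly $t^{km}$ and $y$ pulls back to $t^n+O(t^{n+1})$.

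The essential input is the structure of the numerical semigroup generated by $m$ and $n$: because $1<m<n$, the equation $km+ln=m$ in $\Z_{\ge 0}$ admits only the solution $(k,l)=(1,0)$, and because $m\nmid n$, the equation $km+ln=n$ admits only the solution $(k,l)=(0,1)$. From this I would read off, for $i=1,2$, that the coefficient of $t^m$ in $g_i\circ\gamma$ equals $c^{(i)}_{10}$ (all monomials of total degree $\ge 2$ pull back to order $\ge 2m>m$), and that the coefficient of $t^n$ in $g_i\circ\gamma$ equals $c^{(i)}_{01}$ (the monomials $x^k$ with $k\ge 1$ pull back to $t^{km}$ with $km\ne n$ since $m\nmid n$; every monomial $x^ky^l$ with $l\ge 1$ other than $y$ has lowest-order term of degree $>n$; and $y$ contributes exactly $c^{(i)}_{01}t^n+O(t^{n+1})$).

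Finally I would compare with $\gamma\circ\hat g(t)=\bigl((\hat g(t))^m,\ (\hat g(t))^n+O(t^{n+1})\bigr)$, noting that $(\hat g(t))^m=\lambda^mt^m+O(t^{m+1})$ and $(\hat g(t))^n+O(t^{n+1})=\lambda^nt^n+O(t^{n+1})$ (both valid also when $\lambda=0$). The first coordinate at order $t^m$ gives $c^{(1)}_{10}=\lambda^m$; the second coordinate at order $t^m$ gives $c^{(2)}_{10}=0$, since $m<n$ and so there is no $t^m$ term on the right; and the second coordinate at order $t^n$ gives $c^{(2)}_{01}=\lambda^n$. Hence $D_{\0}g$ is upper triangular with diagonal entries $\lambda^m$ and $\lambda^n$, and its eigenvalues are $\lambda^m$ and $\lambda^n$, as claimed. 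The only step requiring care is the bookkeeping of which monomials $x^ky^l$ can contribute to the $t^m$- and $t^n$-coefficients after substitution — this is exactly where the hypotheses $1<m<n$ and $m\nmid n$ are used; geometrically, the vanishing $c^{(2)}_{10}=0$ records that $D_{\0}g$ preserves the tangent line $\{y=0\}$ of $\Sigma$, $\lambda^m$ being the multiplier along $\Sigma$ and $\lambda^n$ the transversal multiplier.
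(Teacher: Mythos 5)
Your proof is correct and takes essentially the same approach as the paper's: substitute the Puiseux parametrization into $g\circ\gamma=\gamma\circ\hat g$, expand in $t$, and compare the $t^m$ and $t^n$ coefficients, using $1<m<n$ and $m\nmid n$ to show that only the linear monomials $x$ and $y$ can contribute at those orders. The paper packages the higher-order terms as $h_i(x,y)=O(\|(x,y)\|^2)$ rather than writing the full power series, but the bookkeeping over the numerical semigroup generated by $m$ and $n$ is the same.
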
  
\begin{proof}
	The germ $g$ has an expansion of the form
	\[g(x,y)=\big(ax+by+h_1(x,y),cx+dy+h_2(x,y)\big)
	\]
	where $h_1(x,y)=O(\|(x,y)\|^2), h_2(x,y)=O(\|(x,y)\|^2)$. Replacing those expansions in the equation $\gamma \circ \hat{g} = g \circ \gamma$, we have
	\begin{eqnarray*}
		\big(\lambda^m t^m +O(t^{m+1}), \lambda^n t^n +O(t^{n+1}) \big) &=&\big(a t^m +bt^n +h_1(t^m,t^n+O({t^{n+1}})),\\
		&& ct^m +dt^n +h_2(t^m,t^n+O(t^{n+1})
		)\big).
	\end{eqnarray*}
	Comparing coefficients of the term $t^m$ in each coordinate, we deduce that $a = \lambda^m$ and $c=0$. Comparing coefficients of the term $t^n$ in the second coordinate, since $m \nmid n$, the expansion of $h_2$ can not contribute any term of order $t^n$, hence $d=\lambda^n$. The linear part of $g$ has the form 
	$\left(\begin{array}{cc}
	a& b\\
	0&d 
	\end{array}\right)$
	hence $a,d$ are eigenvalues of $D_0 g$. In other words, $\lambda^m$ and $\lambda^n$ are eigenvalues of $D_{0} g$.
\end{proof}
When $g$ has an invariant smooth branch which is the image of another branch, $g$ is not an injective germ hence $0$ is an eigenvalue of $D_{0}g$. This case was not considered in \cite{jonsson1998some} since Jonsson assumed that there is no periodic critical point.
\begin{proposition}\label{Preperiodic regular}
	Let $g: (\C^2,0) \rightarrow (\C^2,0)$ be a proper holomorphic germ and let $\Sigma_1,\Sigma_2$ be irreducible germs of curves at $0$ such that $\Sigma_1 \neq \Sigma_2,g(\Sigma_1)=\Sigma_2, g(\Sigma_2)=\Sigma_2$. If $\Sigma_2$ is smooth then the eigenvalues of $D_{z} g$ are $0$ and $\lambda$ where $\lambda$ is the eigenvalue of $D_{0} g|_{T_{0} \Sigma_2}$.
\end{proposition}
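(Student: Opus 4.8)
The plan is to reduce the statement to a short local computation in well-chosen coordinates. Since $\Sigma_2$ is a regular germ, I would first pick local holomorphic coordinates $(x,y)$ on $(\C^2,\textbf{0})$ in which $\Sigma_2=\{y=0\}$; then $T_{\textbf{0}}\Sigma_2=\C\cdot\partial_x$, and because $g(\Sigma_2)=\Sigma_2$ the differential $D_{\textbf{0}}g$ does preserve $T_{\textbf{0}}\Sigma_2$, so that $\lambda$ is simply the scalar by which $D_{\textbf{0}}g$ multiplies $\partial_x$. Writing $g=(g_1,g_2)$, the invariance of $\Sigma_2$ says exactly that $g_2$ vanishes on $\{y=0\}$, hence $g_2(x,y)=y\,u(x,y)$ for some holomorphic germ $u:(\C^2,\textbf{0})\to(\C,0)$. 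From this, $\partial_x g_2(\textbf{0})=0$ and $\partial_y g_2(\textbf{0})=u(\textbf{0})$, so in the basis $(\partial_x,\partial_y)$ one gets
\[
D_{\textbf{0}}g=\begin{pmatrix} a & b\\ 0 & u(\textbf{0})\end{pmatrix},\qquad a:=\partial_x g_1(\textbf{0}),\ b:=\partial_y g_1(\textbf{0}),
\]
whose eigenvalues are $a$ and $u(\textbf{0})$; and since $D_{\textbf{0}}g(\partial_x)=a\,\partial_x$, the restricted eigenvalue is $a=\lambda$. So the whole proposition comes down to the single claim that $u(\textbf{0})=0$.

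To obtain $u(\textbf{0})=0$ I would bring in the second curve. As $\Sigma_1\neq\Sigma_2$ and both are irreducible germs, $\Sigma_1\cap\Sigma_2=\{\textbf{0}\}$ as germs, so on a small enough representative $y$ is nonvanishing on $\Sigma_1\setminus\{\textbf{0}\}$; concretely, a Puiseux parametrization $\gamma=(\gamma_x,\gamma_y)$ of $\Sigma_1$ has $\gamma_y\not\equiv0$. On the other hand $g(\Sigma_1)=\Sigma_2=\{y=0\}$ forces $g_2$ to vanish on $\Sigma_1$, i.e.\ $g_2\circ\gamma\equiv0$. Combining, $0\equiv g_2\circ\gamma=\gamma_y\cdot(u\circ\gamma)$ with $\gamma_y\not\equiv0$ gives $u\circ\gamma\equiv0$, hence $u(\textbf{0})=u(\gamma(0))=0$. (Equivalently, on $\Sigma_1\setminus\{\textbf{0}\}$ one has $0=g_2=y\,u$ with $y\neq0$, so $u\equiv0$ there, and $u(\textbf{0})=0$ by continuity.) Together with the matrix computation this shows the eigenvalues of $D_{\textbf{0}}g$ are $0$ and $\lambda$.

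I do not expect a genuine obstacle here; the delicate points are purely organizational. One must justify that the set-germ statements ``$g_2$ vanishes on $\Sigma_i$'' are legitimate — they follow from $g(\Sigma_i)\subseteq\Sigma_2=\{y=0\}$ on small representatives, the properness of $g$ ensuring that $g(\Sigma_1)$ is an honest curve so that the hypothesis $g(\Sigma_1)=\Sigma_2$ is meaningful — and one must record that $D_{\textbf{0}}g$ preserves $T_{\textbf{0}}\Sigma_2$ so that the eigenvalue appearing in the statement is well defined. The step most prone to a slip is matching the restricted eigenvalue of $D_{\textbf{0}}g|_{T_{\textbf{0}}\Sigma_2}$ with the diagonal entry $a$, i.e.\ checking that the normalizing coordinates are compatible with the splitting; but this is immediate once the coordinates are fixed as above, so I expect the argument to be entirely routine.
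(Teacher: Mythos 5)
Your proof is correct and takes essentially the same route as the paper: both normalize coordinates so that $\Sigma_2$ is a coordinate axis, use the invariance of $\Sigma_2$ to factor the corresponding component of $g$, and then substitute a Puiseux parametrization of $\Sigma_1$ (which is not contained in $\Sigma_2$) to force the constant term of that factor to vanish, giving the zero eigenvalue, with the other eigenvalue read off from the triangular form of $D_{\textbf{0}}g$. The only differences are cosmetic — you place $\Sigma_2$ on $\{y=0\}$ rather than $\{x=0\}$ and phrase the factorization abstractly as $g_2 = y\cdot u$ instead of via an explicit Taylor expansion, which incidentally lets you avoid the paper's (unneeded) insistence on a nonzero leading coefficient in the parametrization of $\Sigma_1$.
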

\begin{proof} Since $\Sigma_2$ is smooth, we choose a local coordinates $(x,y)$ of $(\C^2,0)$ such that ${\Sigma_2 = \{x=0\}}$. Since $\Sigma_1$ and $\Sigma_2$ are distinct irreducible germs, the defining function of $\Sigma_1$ does not identically vanish on $\Sigma_2$. Then we can find a Puiseux parametrization of $\Sigma_1$ of the form
	\[\gamma(t)=(t^m,\alpha t^n+O(t^{n+1})), \alpha \in \C\setminus \{0\}
	\]
	where $m,n$ are positive integers (see \cite[Theorem 2.2.6]{wall2004singular}). The germ $g$ has an expansion of the form
	\[g(x,y)=(ax+by+h_1(x,y),cx+dy+h_2(x,y))
	\]
	where $h_1(x,y)=O(\|(x,y)\|^2), h_2(x,y)=O(\|(x,y)\|^2)$. The invariance of $\Sigma_2$ implies that $b=0$ and $g$ has the form
	\[g(x,y) = (x(a+h_3(x,y)), cx+dy+h_2(x,y))
	\]
	where $h_3(x,y)=O(\|(x,y)\|),h_2(x,y)=O(\|(x,y)\|^2)$. Replacing $\gamma$ and $f$ in the equation ${g(\Sigma_1)=\Sigma_2}$, we have
	\[t^m(a+h_3(t^m,\alpha t^n+O(t^{n+1})))=0
	\]
	hence $a=0$. Then the linear part of $D_{0}g$ is $(0,cx+dy)$ hence $0$ and $d$ are the eigenvalues of $D_{0} g$ where $d$ is the eigenvalue of the restriction of $D_{0}g$ to $T_{0} \Sigma_2$. 
\end{proof}
And finally, if $g$ has two invariant smooth branches which are tangent, we have the following result (see also \cite{jonsson1998some}).
\begin{proposition}\label{tangential regular}
	Let $g: (\C^2,0) \rightarrow (\C^2,0)$ be a proper holomorphic germ and let $\Sigma_1,\Sigma_2$ be irreducible invariant germs of smooth curves at $0$. If $\Sigma_1$ and $\Sigma_2$ intersect tangentially, i.e. $\Sigma_1 \neq \Sigma_2$ and ${T_{0} \Sigma_1 = T_{0} \Sigma_2 }$ then there exists a positive integer $m$ such that the eigenvalues of $D_{0} g$ are $\lambda$ and $\lambda^m$ where $\lambda$ is the eigenvalue of $D_{0}g|_{T_{0}\Sigma_1}.$ \end{proposition}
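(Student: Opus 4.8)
The plan is to follow the pattern of the proofs of Propositions~\ref{invariant cusp} and~\ref{Preperiodic regular}: choose local coordinates adapted to the two branches, use the invariance of one branch to constrain the linear part of $g$, and use the invariance of the other branch, via a parametrization, to pin down the second eigenvalue by comparing leading terms. First I would normalize coordinates. Since $\Sigma_2$ is regular, pick local coordinates $(x,y)$ of $(\C^2,\textbf{0})$ with $\Sigma_2=\{y=0\}$; the tangency assumption then gives $T_{\textbf{0}}\Sigma_1=T_{\textbf{0}}\Sigma_2=\{y=0\}$. As $\Sigma_1$ is regular, tangent to $\{y=0\}$ and distinct from $\Sigma_2$, the implicit function theorem writes $\Sigma_1=\{y=\phi(x)\}$ with $\phi$ holomorphic, $\phi(0)=0$, $\phi'(0)=0$ and $\phi\not\equiv 0$; hence $\phi(x)=\alpha x^{m}+O(x^{m+1})$ with $\alpha\neq 0$ and $m\geq 2$ (this integer $m$ is the intersection multiplicity of $\Sigma_1$ and $\Sigma_2$ at $\textbf{0}$, an equisingularity invariant, here playing the role of the Puiseux characteristics of Proposition~\ref{invariant cusp}). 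After rescaling $y$ we may assume $\alpha=1$, and we parametrize $\Sigma_1$ by the injective germ $\gamma(t)=(t,\phi(t))$.

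Next I would use the two invariances in turn. Write $g(x,y)=\bigl(ax+by+h_1(x,y),\,cx+dy+h_2(x,y)\bigr)$ with $h_1,h_2=O(\|(x,y)\|^2)$. Invariance of $\Sigma_2=\{y=0\}$ forces the second coordinate of $g(x,0)$ to vanish identically, hence $c=0$ and $y\mid h_2$; so $D_{\textbf{0}}g$ is upper triangular with eigenvalues $a$ and $d$. The invariance of $\Sigma_1$ yields a germ $\hat g:(\C,0)\to(\C,0)$ with $g\circ\gamma=\gamma\circ\hat g$; since the first coordinate of $g(\gamma(t))$ is $at+b\phi(t)+h_1(t,\phi(t))=at+O(t^{2})$ (the term $h_1(t,\phi(t))$ is $O(t^{2})$ because $x=t$ along $\Sigma_1$ and $\phi(t)=O(t^{m})$), we get $\hat g(t)=at+O(t^{2})$; and as $T_{\textbf{0}}\Sigma_1=\C\cdot(1,0)$ with $D_{\textbf{0}}g\,(1,0)=(a,0)$, the multiplier $a=\hat g'(0)$ equals the eigenvalue $\lambda$ of $D_{\textbf{0}}g|_{T_{\textbf{0}}\Sigma_1}$. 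It remains to identify $d$. Using $y\mid h_2$, the second coordinate of $g(\gamma(t))$ is $\phi(t)\bigl(d+O(t)\bigr)=d\,t^{m}+O(t^{m+1})$; on the other hand, from $g\circ\gamma=\gamma\circ\hat g$ this second coordinate equals $\phi(\hat g(t))=\hat g(t)^{m}+O(t^{m+1})=a^{m}t^{m}+O(t^{m+1})$. Comparing the coefficients of $t^{m}$ gives $d=a^{m}=\lambda^{m}$, so the eigenvalues of $D_{\textbf{0}}g$ are $\lambda$ and $\lambda^{m}$.

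I do not expect a serious obstacle. The only delicate points are checking that $\phi$ is a genuine nonzero power series, which is exactly where the hypothesis $\Sigma_1\neq\Sigma_2$ is used, and verifying that the comparison of $t^{m}$-coefficients survives the degenerate case $\lambda=0$: there $\hat g(t)=O(t^{2})$, so the $t^{m}$-coefficient of $\hat g(t)^{m}$ still vanishes, which again equals $\lambda^{m}=0$, and the identity $d=\lambda^{m}$ persists.
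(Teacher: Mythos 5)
Your proof is correct and follows essentially the same strategy as the paper's: straighten one of the two regular invariant branches to a coordinate axis, deduce from its invariance that the linear part is triangular, then parametrize the other (tangent) branch and compare the coefficient of $t^m$ on both sides of the invariance identity to get $d=\lambda^m$. The only cosmetic difference is that you put $\Sigma_2$ on the axis and parametrize $\Sigma_1$, whereas the paper does the reverse; by the symmetry of the hypotheses (and since $m$ is the intersection multiplicity, symmetric in the two branches) this changes nothing.
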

\begin{proof}
	Since $\Sigma_1$ is smooth, we can choose a local coordinates $(x,y)$ such that $\Sigma_1=\{y=0\}$. The defining function of $\Sigma_2$ can not identically vanish on $\{x=0\}$ since otherwise, $\Sigma_2$ and $\Sigma_1$ would not tangent. Then $\Sigma_2$ has a parametrization of the form
	\[\gamma(t)=(t,  t^m+O(t^{m+1})).
	\]
	Since $\Sigma_1=\{y=0\}$ is invariant, $g$ has an expansion in the coordinates $(x,y)$ of the form:
	\[g(x,y)=\big(\lambda x+by+h_1(x,y),y(d+h_2(x,y))\big)
	\]
	where $\lambda,b,d \in \C, h_1(x,y)=O(\|(x,y)\|^2),h_2(x,y)=O(\|(x,y)\|)$. The linear part of $D_{\0}g$ is ${(\lambda x+by,dy)}$ thus $\lambda,d$ are eigenvalues of $D_{z}f$. Letting $x=t,y = t^m + O(t^{m+1})$, we have
	\[
	f(t,t^m + O(t^{m+1})) = (\lambda t + O(t^2), d t^m + O(t^{m+1} ))
	\]Since $f(\Sigma_2) = \Sigma_2$, we deduce that $d=\lambda^m$. Note that $\lambda$ is the eigenvalue of $D_{\0} g|_{T_{\0} \Sigma_1}$.
\end{proof}
Using these observations, we can conclude the proof of Theorem \ref{PCA P2}.
\begin{proof}[Proof of Theorem \ref{PCA P2} -- final]
	Let $f$ be a post-critically algebraic endomorphism of $\PP^2$ and let $z$ be a fixed point such that $z$ is a singular point of $PC(f)$. We will look at the germ $(PC(f),z)$ induced by $PC(f)$ at $z$ and prove Theorem \ref{PCA P2} depending on how $f$ acts on irreducible branches of $(PC(f),z)$. Passing to an iterate of $f$ if necessary, we assume that there exists a branch $\Sigma$ of $(PC(f),z)$ such that $f(\Sigma) = \Sigma$. Denote by $\Gamma$ the irreducible component of $PC(f)$ inducing $\Sigma$. Then $\Gamma$ is also invariant by $f$. Denote by $\n: \hat{\Gamma} \to \Gamma$ the normalization of $\Gamma$ and by $\f$ the lifting of $f$ by the normalization $\n: \hat{\Gamma} \rightarrow \Gamma$.
	
	If $\Sigma$ is singular then $\n^{-1}(z)$ is a finite set and $\f(\n^{-1}(z)) \subset \n^{-1}(z)$ since $z$ is a fixed point of $f$. Then by passing up to some iterations, we can assume that $\f$ fixes a point $w_0 \in \n^{-1}(z)$. By Proposition \ref{invariant cusp}, the eigenvalues of $D_{z} f$ are $\lambda^m, \lambda^n$ where $\lambda$ is the eigenvalue of $D_{w_0} \f$ and $m$ and $n$ are the first two Puiseux characteristics of $\Sigma$. By Proposition \ref{pcf}, $\lambda$ is either superattracting or repelling. Hence so are $\lambda^m$ and $\lambda^n$.
	
	If $\Sigma$ is smooth, the tangent space $T_{z} \Sigma$ is well-defined and invariant by $D_z f$. Denote by $\lambda$ the eigenvalue of $D_{z} f|_{T_z \Sigma}$. Even if $\Gamma$ can be singular (for example, $z$ can be a self-intersection point of $\Gamma$), there exists a point $w \in \hat{\Gamma}, \n(w) = z$ such that $w$ is a fixed point of $\hat{f}$ and $\n$ induces an invertible germ $\n: (\hat{\Gamma},w) \to (\Sigma,z)$. Arguing similarly to Lemma \ref{repel on normalisation}, we can deduce that either $\lambda=0$ or $|\lambda |>1$. To deal with the other eigenvalue, since $(PC(f), z)$ is singular, we have one of the following cases: 
	\begin{itemize}
		\item[1.] There exists a branch $\Sigma_1$ such that $f(\Sigma_1) = \Sigma$. By Proposition \ref{Preperiodic regular}, the eigenvalues of $D_z f$ are $0$ and $\lambda$ where $\lambda$ is the eigenvalue of $D_{z} f|_{T_{z} \Sigma}$. Hence we are done by previous discussion.
		\item[2.] There exists a smooth invariant branch $\Sigma_1$ such that $\Sigma$ and $\Sigma_1$ intersect transversally. Denote by $\Gamma_1$ the irreducible component of $PC(f)$ containing $\Sigma_1$ (which can be $\Gamma$) thus $\Gamma_1$ are invariant by $f$. Since $\Sigma $ and $\Sigma_1$ are transversal, the eigenvalues of $D_{z}f$ are $\lambda$ and $\lambda_1$ where $\lambda_1$ is the eigenvalue of $D_{z} f|_{T_{z} \Sigma_1}$. Arguing similarly to the case of $\lambda$, we deduce $\lambda_1$ is also either superattracting or repelling.
		\item[3.] There exists a smooth invariant branch $\Sigma_1$ such that $\Sigma$ and $\Sigma_1$ intersect tangentially. Denote by $\Gamma_1$ the irreducible component of $PC(f)$ inducing $\Sigma_1$ (which is possibly equal to $\Gamma$) thus $\Gamma_1$ are invariant by $f$. By Proposition \ref{tangential regular}, the eigenvalues of $D_{z} f$ are $\lambda$ and $\lambda^m$. Note that $\lambda$ is either superattracting or repelling hence so does $\lambda^m$.
	\end{itemize}\qedhere
	
\end{proof}
\bibliographystyle{alpha}
\bibliography{ref}
\end{document}